\documentclass[11pt]{amsart}
\usepackage[utf8]{inputenc}
\usepackage{color}
\usepackage{tikz}
\usetikzlibrary{patterns}
\usepackage[margin=2.5cm]{geometry}
\usepackage{verbatim}
\usepackage[colorinlistoftodos,prependcaption,textsize=small]{todonotes}
\usepackage{amssymb}
\usepackage{url}
\usepackage{enumerate}

\def\peb[#1]{{\left\lfloor #1\right\rfloor}}
\def\pe[#1]{{\left\lceil #1\right\rceil}}
\newcommand{\cone}{\operatorname{L}_{\mathbb{Q_+}}}

\title[Stratified sets of atoms]{Factorization in monoids by stratification of atoms and the Elliott Problem}
\author{Pedro A. Garc\'ia-S\'anchez}
\address{Departamento de \'Algebra and IEMath-GR, Universidad de Granada, E-18071 Granada, Espa\~na}
\email{pedro@ugr.es}
\author{Ulrich Krause}
\address{Universit\"at Bremen, Fachberich Mathematik/Informatik, D-28359 Bremen, Germany}
\email{ukrause@uni-bremen.de}
\author{David Llena}
\address{Departamento de Matem\'{a}ticas, Universidad de Almeria, E-04120 Almer\'ia,  Espa\~na}
\email{dllena@ual.es}
\thanks{The first and third authors are supported by the project MTM2017-84890-P, which is funded by Ministerio de Econom\'{\i}a, Industria y Competitividad and Fondo Europeo de Desarrollo Regional FEDER, and by the Junta de Andaluc\'{\i}a Grant Number FQM-343.}

\thanks{The authors would like to express their gratitude to the referee for their detailed and exhaustive report.}
\date{\today}



\newtheorem{theorem}{Theorem}
\newtheorem{proposition}[theorem]{Proposition}
\newtheorem{lemma}[theorem]{Lemma}
\newtheorem{corollary}[theorem]{Corollary}

\theoremstyle{remark}
\newtheorem{example}{Example}
\newtheorem{remark}[theorem]{Remark}

\DeclareMathOperator{\Ap}{Ap}
\DeclareMathOperator{\supp}{supp}

\begin{document}

\begin{abstract}
	In  an additive factorial monoid each element can be represented as a linear combination of irreducible elements (atoms) with uniquely determined coefficients running over all natural numbers. 
	
	In this paper we develop for a wide class of non-factorial monoids a concept of stratification for atoms which allows to represent each element as a linear combination of atoms where the coefficients are uniquely determined when restricted in a particular way. This wide class includes inside factorial monoids and in particular simplicial affine semigroups. In the latter case the question of uniqueness is related to a problem studied by E. B. Elliott in a paper from 1903.
	
	For the monoid of all nonnegative solutions of a certain linear Diophantine equation in three variables, Elliott considers ``simple sets of solutions'' (atoms of the monoid) and looks for a method that gives ``every set once only''. We show that for simplicial affine semigroups in two dimensions a stratification is always possible, which answers to Elliott's problem also for the cases he left open.
	
	The results in this paper on the stratification of atoms for monoids in general may be seen also as an answer to a ``generalized Elliott problem''. 
\end{abstract}
\maketitle

\section{Introduction}

In \cite{elliot}, Elliott proposed the following problem: given the linear Diophantine equation $ax+by=cz$, with $a$, $b$ and $c$ positive integers, is it possible to represent all nonnegative integer solutions in a unique manner by means of a ``simple set of solutions''? He was able to describe parametrically the solutions for small $c$ with the help of generating functions: every solution to $ax+by=cz$ had a unique expression as a linear combination of some basic solutions with some coefficients free and others bounded or fulfilling some linear constraints. In \cite{GSKL} we gave a possible approach to this problem based on Ap\'ery sets, but still the solution proposed was not in general in the spirit of \cite{elliot}. In the present manuscript we provide a solution that generalizes the idea employed in \cite{elliot}, which is based on a stratification of the Hilbert basis of the set of solutions by successive elimination of extremal rays. We will see this approach in Section~\ref{sec:motivation}, inspired in the primary decomposition of an element in a numerical semigroup by using a sequence of incremental Ap\'ery sets. We will show that the monoids considered by Elliott in his paper admit a stratification of its sets of atoms, giving in this way a solution to the problem of expressing every nonnegative integer solution to the equation $ax+by=cx$ uniquely in terms of a ``simple set of solutions''. 

The idea of stratification proposed in Section~\ref{sec:motivation} relies on the concept of extremal ray, which appears in the literature under different names depending on the scope it is considered. Section~\ref{sec:extraction} is devoted to the study of pure and strong atoms, and to relate these to notions to the concept of absolutely irreducible element, completely fundamental element, extreme rays in affine semigroups, and elementary atoms in block monoids. In that section we also study some properties of extraction monoids of finite type that will enable to relate Apéry sets with extraction degrees. As an example we will see that this relationship recovers the parametrizations given by Elliott in \cite{elliot}. We will show that several interesting and well studied families of monoids are extraction monoids of finite type.

The last section generalizes the idea of stratification given in the motivation section (Section~\ref{sec:motivation}). We introduce the concept of set linked to a monoid, which in some sense extrapolates the idea of extremal rays in an affine semigroup to cancellative and reduced monoids. When a set $Q$ is linked to a monoid $M$, then every element in the monoid can be expressed as a combination of elements in $Q$ plus an element in the Ap\'ery set of $Q$ in $M$. We characterize when this way of expressing elements is unique, which actually serves as initial step of a stratification of the set of atoms. This characterization is inspired in (and generalizes at the same time) the decomposition that was already observed in \cite[Theorem~1.5]{c-m} for Cohen-Macaulay simplicial affine semigroups. We prove that the basis of an inside factorial monoid is always linked to the monoid, and if in addition the monoid is root-closed, then the uniqueness mentioned above holds. Several examples are given for which a stratification of the set of atoms can be performed.

\section{Some notation}

We tried to keep this section as small as possible, and when needed we will introduce new concepts and definitions. 

Let $(M,+)$ be an additive commutative monoid. We say that $M$ is \emph{cancellative} if  $a+b=a+c$ implies $b=c$ for all $a,b,c\in M$. An element $u\in M$ is a \emph{unit} if there exists $v\in M$ with $u+v=0$. The monoid $M$ is \emph{reduced} if its only unit is its identity element, denoted by $0$. 

An non-unit element $a\in M$ is an \emph{atom} if whenever $a=b+c$ for some $b,c\in M$, either $b$ or $c$ is a unit. 

Monoids considered in this manuscript are reduced and cancellative.  In this setting, $a\in M\setminus\{0\}$ is an atom if it cannot be expressed as a sum of two non-zero elements of $M$. These elements are also known in the literature as irreducibles. We denote the set of atoms of $M$ by $\mathcal{A}(M)$. We say that $M$ is \emph{atomic} if $M=\langle \mathcal{A}(M)\rangle$, that is, every element in $M$ can be expressed as a sum of finitely many atoms \cite{g-hk}.

As we are assuming that $M$ is cancellative, $M$ can naturally be embedded into its quotient group, which we denote by $\operatorname{G}(M)$, and we can identify its elements with $a-b$ with $a,b\in M$.

Let $M$ be a monoid and let $X$ be a subset of $M$. The set $I=X+M=\{x+m \mid x\in X, m\in M\}$ is an ideal of $M$ ($I+M\subseteq I$). We define the \emph{Apéry set} of $X$ in $M$ as 
\[
\Ap(M,X)=M\setminus (X+M).
\]
If $X=\{x\}$, we will write $\Ap(M,x)$ for $\Ap(M,\{x\})$.

\section{Motivation}\label{sec:motivation}

Given a numerical semigroup $S$ and a nonzero element $m\in S$, every element $x\in S$ can be expressed uniquely as 
\begin{equation}\label{eq:ap-ns}
	x=km+w
\end{equation}
for some nonnegative integer $k$ and $w\in \Ap(S,m)$ (see for instance \cite[Lemma~2.6]{RG}). Let $n_1,\ldots,n_e$ be the minimal generators of $S$. We can express $x=k_1n_1+w_1$ with $w_1\in \Ap(S,n_1)$ and $k_1\in\mathbb{N}$, and $k_1$ and $w_1$ are unique. We can then write $w_1=k_2n_2+w_2$, for some (unique) $k_2\in \mathbb{N}$ and $w_2\in \Ap(S,n_2)$. Notice that $w_2$ is also in $\Ap(S,n_1)$, since $w_1\in \Ap(S,n_1)$. Hence, $x=k_1n_1+k_2n_2+w_2$ for some unique $k_1,k_2\in \mathbb{N}$ and $w_2\in \Ap(S,\{n_1,n_2\})$. We can repeat this process until $x=k_1n_1+\dots+k_{e-1}n_{e-1}+k_en_e+w_e$ with $k_1,\dots,k_e\in \mathbb{N}$ and $w_e\in \Ap(S,\{n_1,\dots,n_e\})$. But as $\{n_1,\ldots,n_e\})$ is a generating system of $S$, we have that $\Ap(S,\{n_1,\dots,n_e\})=\{0\}$. Thus for every $x\in S$, there exists unique $k_1,\ldots,k_e\in\mathbb{N}$ such that
\begin{itemize}
	\item $x=k_1n_1+\dots+k_e n_e$,
	\item $k_{i+1}n_{i+1}+\dots+k_en_e\in \Ap(S,\{n_1,\ldots,n_i\})$ for all $i\in\{1,\dots, e\}$.
\end{itemize}  
The expression $k_1n_1+\dots+k_en_e$ is known as the \emph{primary representation} of $x$ with respect to the arrangement $n_1,\dots,n_e$ of the minimal generators of $S$.

Elliot was concerned with a unique representation of the nonnegative integer solutions of $ax+by=cz$. The monoid $N=\{(x,y,z)\in \mathbb{N}^3 \mid ax+by=cz\}$, with $a$, $b$ and $c$ positive integers admits a unique minimal generating system known as the \emph{Hilbert basis} of $N$. This monoid is isomorphic to $M=\{(x,y)\in \mathbb{N}^2\mid ax+by\equiv 0\pmod c\}$. A way to parametrize solutions in terms of Apéry sets was given in \cite{GSKL}. Let us see with an example if we can produce something analogous to primary representations on numerical semigroups.

Observe that $M$ (and $N$) are \emph{full}, that is, for all $x,y\in M$, $x-y\in\mathbb{N}^2$ if and only if $x-y\in M$, or in other words, $\operatorname{G}(M)\cap\mathbb{N}^2=M$, where $\operatorname{G}(M)=\{a-b\mid a,b\in M\}$ is the group spanned by $M$ in $\mathbb{Z}^2$.

\begin{example}\label{ElliottExample}
	Let  $M$ be the set of non-negative integer solutions of $x+2y\equiv 0 \pmod 7$. The Hilbert basis of $M$ (there are many ways to compute this Hilbert basis) is \[H=\{(0,7),(1,3),(3,2),(5,1),(7,0)\}.\] This example is not covered by \cite[Corollary 5]{GSKL}, as $H$ has three atoms not in $\{(0,7),(7,0)\}$. More precisely, we have that $(6,4)=(1,3)+(5,1)=(3,2)+(3,2)$, that is, there exist two different ways to write $(6,4)$ as combination of elements from $H$.

	We want to find for any $(x,y)\in M$ a unique representation of $(x,y)$ as a combination of elements in $H$ subject to some Apéry restrictions. In a numerical semigroup we may choose any nonzero element and get a representation of the form \eqref{eq:ap-ns}. Set $u_1=(7,0)$ and $v_1=(0,7)$, which clearly span the same cone as $M$: $\cone(M)=\cone(\{u_1,v_1\})=\{au_1+bv_1\mid a,b\in \mathbb{Q}, a\ge 0,b\ge 0\}$. The monoid $M$ is Cohen-Macaulay (\cite{rgs-full}, we will come back to this later), and so we can use \cite[Lemma~1.4 and Theorem ~1.5]{c-m} to find unique $k_1$ and $l_1$ in $\mathbb{N}$, and $w_1\in \Ap(M,\{(7,0),(0,7)\})$ such that $(x,y)=k_1(7,0)+l_1(0,7)+w_1$. 

	Lemma~13 in \cite{GSKL} states that
	\[
	\Ap(M,\{u_1,v_1\})=\{(0,0),(1,3),(2,6),(3,2),(4,5),(5,1),(6,4)\}\subset [0,7)\times [0,7).
	\]
	Now, if $w_1\neq (0,0)$, we want to find a way to express it uniquely with respect to some elements in $\Ap(M,\{u_1,v_1\})$. Notice that all the elements in $\Ap(M,\{u_1,v_1\})$ are in the cone spanned by $(1,3)$ and $(5,1)$. So let us denote $u_2=(1,3)$ and $v_2=(5,1)$. Observe that 
	\[\begin{array}{rcl}
	(1,3) & = & 1\times (1,3)+ 0\times(5,1), \\
	(2,6) & = & 2\times (1,3)+ 0\times (5,1), \\
	(5,1) & = & 0\times (1,3)+ 1\times (5,1), \\
	(6,4) & = & 1\times (1,3)+ 1\times (5,1), \\
	\end{array} 
	\]
	and that these expressions are unique (the monoid $\langle u_2,v_2\rangle$ is factorial). Observe that $(3,2)\in \Ap(M,\{u_1,v_1,u_2,v_2\})$, and $(4,5)=1\times (1,3)+(3,2)$. Notice that $\Ap(M,\{u_1,v_1,u_2,v_2\})=\{(0,0),(3,2)\}$. Take $u_3=(3,2)$.
		\begin{center}
	\begin{tikzpicture}[scale=0.4pt]
		\foreach \b in {0, 2,...,14}
		\foreach \a in {0, 2,...,14}
		\filldraw[fill=black] (\a,\b) circle (2pt);
		\filldraw[fill=red] (0,14) circle (6pt); \node at (0,14.5) {{\color{red}$^{(0,7)}$}};	
		\filldraw[fill=red] (2,6) circle (6pt); \node at (2,6.5) {{\color{red}$^{(1,3)}$}};	
		\filldraw[fill=red] (4,12) circle (3pt); \node at (4,12.5) {{\color{red}$^{(2,6)}$}};	
		\filldraw[fill=red] (6,4) circle (6pt); \node at (6,4.5) {{\color{red}$^{(3,2)}$}};	
		\filldraw[fill=red] (8,10) circle (3pt); \node at (8,10.5) {{\color{red}$^{(4,5)}$}};	
		\filldraw[fill=red] (10,2) circle (6pt); \node at (10,2.5) {{\color{red}$^{(5,1)}$}};	
		\filldraw[fill=red] (12,8) circle (6pt); \node at (12,8.5) {{\color{red}$^{(6,4)}$}};	
		\filldraw[fill=red] (14,0) circle (6pt); \node at (14,0.5) {{\color{red}$^{(7,0)}$}};	
		\draw[->,very thick] (0,0) -- (0,14); \draw[-,dashed] (0,14) -- (14,14) -- (14,0);	
		\draw[->,very thick] (0,0) -- (14,0); 	
		\node at (-0.5,7) {{\color{black}$^{u_1}$}}; \node at (7,-0.5) {{\color{black}$^{v_1}$}};		
		\draw[->,blue,very thick] (0,0) -- (2,6); \draw[->,blue] (2,6) -- (4,12); 	\draw[-,dashed, blue] (4,12) -- (14,14) -- (10,2); 
		\draw[->,blue,very thick] (0,0) -- (10,2);\draw[->,blue] (2,6) -- (12,8); 
		\node at (1,4) {{\color{blue}$^{u_2}$}}; 	\node at (2.5,9) {{\color{blue}$^{2u_2}$}}; \node at (6,0.5) {{\color{blue}$^{v_2}$}}; \node at (7,6.5) {{\color{blue}$^{u_2+v_2}$}};
		\draw[->,green!50!black,very thick] (0,0) -- (6,4); 	\draw[->,green!50!black] (2,6) -- (8,10);
		\node at (3,2.5) {{\color{green!50!black}$^{u_3}$}}; \node at (7.5,8.8) {{\color{blue}$^{u_2+}$}{\color{green!50!black}$^{u_3}$}};   	
	\end{tikzpicture}
\end{center}
	
	Set $H_1=\{u_1,v_1\}$, $H_2=\{u_2,v_2\}$ and $H_3=\{u_3\}$. Then, it follows that for every $(x,y)\in M$ there exists unique $h_i\in \langle H_i\rangle$, $i\in\{1,2,3\}$, such that 
	\begin{itemize}
		\item $(x,y)=h_1+h_2+h_3$,
		\item $h_{i+1}+\dots+h_3\in \Ap(M,H_1\cup\dots\cup H_i)$.
	\end{itemize}
	The role of $n_i$ in the numerical semigroup $S$ is now played by $H_i$, and the expression $k_in_i$ is now $k_iu_i+l_iv_i\in \langle H_i\rangle$, which is a factorial monoid, and thus the $k_i, l_i\in \mathbb{N}$ are unique.
\end{example}

We see next that this example is not exceptional; every Hilbert basis of a full affine subsemigroup of $\mathbb{N}^2$ will have a similar decomposition.

Observe that every affine semigroup $M$ contained in $\mathbb{N}^2$ with $\operatorname{rank}(\operatorname{G}(M))=2$ is simplicial, that is, there exists two linearly independent $u,v\in M$ such that $\cone(M)=\cone(\{u,v\})$. We will say that $u$ and $v$ are extremal rays of $M$ (though we will come back to this concept later).

Let $M$ be a full affine semigroup contained in $\mathbb{N}^2$ and with $\operatorname{rank}(\operatorname{G}(M))=2$. Let $H$ be the Hilbert basis of $M$. Then according to \cite[Theorem~10]{rgs-full}, there are two minimal generators in $H$ of the form $(\alpha_1,0)$ and $(0,\alpha_2)$. Moreover, $\Ap(M,\{(\alpha_1,0),(0,\alpha_2)\}) \subseteq [0,\alpha_1-1]\times[0,\alpha_2-1]$. Set $H_1=\{(\alpha_1,0),(0,\alpha_2)\}$. It follows that $\cone(M)=\cone(H_1)$. Now we consider $H_2\subseteq H\setminus H_1$ to be minimal with the condition that $H\setminus H_1\subset \cone(H_2)$. If the cardinality of $H$ is greater than three, then $H_2$ will have two elements. We now find $H_3$ to be minimal inside $H\setminus(H_1\cup H_2)$ such that $H\setminus(H_1\cup H_2)\subset \cone(H_3)$. This process will end in a decomposition of $H=H_1\cup H_2\cup \dots \cup H_k$, such that $H_i$ contains two linearly independent elements of $H$ for all $i\in\{1,\ldots,k-1\}$, and $H_k$ may have one or two linearly independent elements of $H$. 

Given a finite subset $X$ of $M$, define 
\[
D(X)=\left\{ \sum_{x\in X} \alpha_x x \mid \alpha_x\in \mathbb{Q}, 0\le \alpha_x<1 \hbox{ for all }x\in X\right\}.
\]   

\begin{lemma}\label{lem:ap-diamond}
	For every $i\in \{1,\ldots,k-1\}$, $\Ap(M,H_1\cup\dots\cup H_i)\subset D(H_i)$.
\end{lemma}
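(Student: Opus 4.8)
The plan is to fix $i\in\{1,\ldots,k-1\}$, write $H_i=\{u_i,v_i\}$ with $u_i,v_i$ linearly independent, and take an arbitrary $w\in\Ap(M,H_1\cup\cdots\cup H_i)$. Since $u_i,v_i$ form a $\mathbb{Q}$-basis of $\mathbb{Q}^2$, I can write $w=au_i+bv_i$ with uniquely determined $a,b\in\mathbb{Q}$, and the goal $w\in D(H_i)$ amounts to the two bounds $0\le a,b$ and $a,b<1$. I would split the argument accordingly: first that $w\in\cone(H_i)$ (which is exactly $a,b\ge 0$), and then that neither coefficient reaches $1$.

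For the lower bounds I would use the way the blocks $H_j$ are produced, namely by repeatedly stripping off the two extremal rays of what remains of $H$. This gives the key combinatorial fact that the elements of $H$ lying outside $\cone(H_i)$ are exactly those peeled off earlier, namely $u_1,v_1,\ldots,u_{i-1},v_{i-1}$, with $u_1,\ldots,u_{i-1}$ strictly on the $u_i$-side of $\cone(H_i)$ and $v_1,\ldots,v_{i-1}$ strictly on the $v_i$-side. Now suppose $w\notin\cone(H_i)$; by symmetry assume $w$ lies strictly on the $v_i$-side, i.e.\ outside the convex cone $\cone(\{u_1,v_i\})$ whose boundary rays pass through $u_1$ and $v_i$. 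Writing $w=\sum_{h\in H}c_h h$ with $c_h\in\mathbb{N}$ (possible since $H$ generates $M$) and using that $\cone(\{u_1,v_i\})$ is convex, at least one $h$ with $c_h>0$ must lie outside this cone, hence $h\in\{v_1,\ldots,v_{i-1}\}$. Then $w-h=\sum c_{h'}h'-h$ is still a nonnegative integer combination of $H$, so $w-h\in M$ and $w=h+(w-h)\in (H_1\cup\cdots\cup H_{i-1})+M$, contradicting $w\in\Ap(M,H_1\cup\cdots\cup H_i)$. Thus $a,b\ge 0$. I expect this cone-containment step to be the crux of the proof, as it is the only place where the extremal-ray peeling that defines the $H_j$ genuinely enters.

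For the upper bounds I would invoke fullness. Suppose $a\ge 1$. Then $w-u_i=(a-1)u_i+bv_i$ still has nonnegative coordinates in the basis $u_i,v_i$, so $w-u_i\in\cone(H_i)\subseteq\cone(M)$, which is the first quadrant; being an integer vector it lies in $\mathbb{N}^2$. Since also $w-u_i\in\operatorname{G}(M)$ and $M$ is full, the identity $\operatorname{G}(M)\cap\mathbb{N}^2=M$ forces $w-u_i\in M$, whence $w\in u_i+M$, again contradicting the Apéry condition. The same argument with $v_i$ gives $b<1$, so $w=au_i+bv_i\in D(H_i)$.

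Finally, the case $i=1$ is simply the description of $\Ap(M,H_1)$ coming from fullness, namely the box $[0,\alpha_1)\times[0,\alpha_2)$ of \cite[Theorem~10]{rgs-full}, and it serves as the trivial base case; the argument above is really needed only to push the statement into the inner parallelograms $D(H_i)$ for $i\ge 2$, where the generators $u_i,v_i$ no longer sit on the coordinate axes and the componentwise reasoning used for $H_1$ is unavailable.
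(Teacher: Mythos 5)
Your proof is correct, and its second half is exactly the paper's: the strict bounds come from fullness via $w-u_i\in\operatorname{G}(M)\cap\cone(M)\subseteq\operatorname{G}(M)\cap\mathbb{N}^2=M$, contradicting the Ap\'ery condition (you even state the hypothesis correctly as $a\ge 1$, where the paper's proof has the harmless slip ``if $\alpha>1$''). Where you genuinely diverge is the nonnegativity step. The paper disposes of it in one line: an element $w\in\Ap(M,H_1\cup\dots\cup H_i)$ admits no atom of $H_1\cup\dots\cup H_i$ in any factorization over $H$ (otherwise $w\in h+M$), hence $w\in\langle H_{i+1}\cup\dots\cup H_k\rangle$, and by construction $\cone(H_{i+1}\cup\dots\cup H_k)\subseteq\cone(H_i)$, which gives $a,b\ge 0$ at once. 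You instead argue by contradiction through the auxiliary cone $\cone(\{u_1,v_i\})$, using convexity to extract an atom $v_j$ with $j<i$ dividing $w$. This works, but it rests on your unproved ``key combinatorial fact'' that the atoms of $H$ outside $\cone(\{u_1,v_i\})$ lie in $\{v_1,\dots,v_{i-1}\}$; that fact is true and does follow from the construction (the cones are nested, $\cone(H_1)\supseteq\cdots\supseteq\cone(H_k)$, so the extremal rays peel off monotonically by angle, and since a full semigroup is root-closed, distinct atoms are disjoint and no two share a ray), but it is strictly more machinery than needed: the direct observation already places $w$ in $\cone(H_i)$ with no separation argument, and it contradicts the Ap\'ery condition already with respect to $H_1\cup\dots\cup H_{i-1}$. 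In short, your route buys an explicit geometric picture of the stratification at the cost of a lemma-sized side claim, while the paper's one-liner buys brevity; your closing remark about $i=1$ is fine but unnecessary, since the argument is uniform in $i$ rather than inductive.
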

\begin{proof}
	Let $H_i=\{(a,b),(c,d)\}$. Let $(x,y)\in \Ap(M,H_1\cup \dots \cup H_i)$. Hence $(x,y)\in \langle H_{i+1}\cup\dots\cup H_k\rangle$. Also, by construction, $\cone(H\setminus (H_1\cup \dots \cup H_i))=\cone(H_{i+1}\cup\dots\cup H_k) \subseteq \cone(H_i)$. Hence, $(x,y)=\alpha(a,b)+\beta(c,d)$ for some $\alpha,\beta\in \mathbb{Q}$ with $\alpha\ge0$ and $\beta\ge 0$. If $\alpha>1$, then $(x,y)-(a,b)\in \operatorname{G}(M)\cap \cone(M)\subseteq \operatorname{G}(M)\cap \mathbb{N}^2= M$, contradicting that $(x,y)\in\Ap(M,H_1\cup \dots \cup H_i)$. Thus $\alpha<1$ and a similar argument proves that $\beta<1$, yielding $(x,y)\in D(H_i)$.
\end{proof}	

\begin{lemma}\label{lem:unique-diamond}
	Let $u,v\in \mathbb{N}^2$ be linearly independent. Assume that $\alpha u+\beta v+w=w'$ for some $\alpha,\beta\in \mathbb{Z}$, and  $w,w'\in D(\{u,v\})$. Then $\alpha=\beta=0$ and $w=w'$.
\end{lemma}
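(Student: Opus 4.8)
The plan is to reduce everything to comparing coefficients in the basis $\{u,v\}$ and then exploit the fact that the two defining intervals for $D(\{u,v\})$ have length exactly $1$. First I would unpack the membership hypotheses: since $w,w'\in D(\{u,v\})$, I can write $w=\gamma_1 u+\delta_1 v$ and $w'=\gamma_2 u+\delta_2 v$ with all four rationals $\gamma_1,\delta_1,\gamma_2,\delta_2$ lying in the half-open interval $[0,1)$. Substituting into the hypothesis $\alpha u+\beta v+w=w'$ and collecting terms gives
\[
(\alpha+\gamma_1)\,u+(\beta+\delta_1)\,v=\gamma_2\,u+\delta_2\,v .
\]

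The key step is then to invoke the linear independence of $u$ and $v$. Since $u,v\in\mathbb{N}^2$ are linearly independent, they form a $\mathbb{Q}$-basis of $\mathbb{Q}^2$, so the coefficients on the two sides must agree: $\alpha+\gamma_1=\gamma_2$ and $\beta+\delta_1=\delta_2$. This isolates the integers as $\alpha=\gamma_2-\gamma_1$ and $\beta=\delta_2-\delta_1$.

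Finally I would use the length-one bound. Because $\gamma_1,\gamma_2\in[0,1)$, their difference satisfies $-1<\gamma_2-\gamma_1<1$; as $\alpha\in\mathbb{Z}$ lies strictly between $-1$ and $1$, it must be $0$, whence $\gamma_1=\gamma_2$. The identical argument for $\delta$ gives $\beta=0$ and $\delta_1=\delta_2$, so $w=w'$. I do not expect a genuine obstacle here; the one point that must be handled with care is that the strict upper bound $<1$ in the definition of $D$ is essential—if the interval were closed one could have $\alpha=\pm 1$—so the proof really hinges on the half-open intervals having length exactly $1$, which forces the relevant integer differences into the interval $(-1,1)$ and hence to zero.
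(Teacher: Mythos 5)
Your proposal is correct and follows essentially the same route as the paper: expand $w,w'$ in the $\mathbb{Q}$-basis $\{u,v\}$ with coefficients in $[0,1)$, equate coefficients using linear independence, and conclude that the integer differences lie in $(-1,1)$ and hence vanish. The paper phrases the final step as $0\le \alpha+\gamma=\gamma'<1$ rather than $\alpha=\gamma'-\gamma\in(-1,1)$, but this is a cosmetic difference, and your remark that the half-open interval is essential is exactly the point the argument hinges on.
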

\begin{proof}
	Write $u=(u_1,u_2)$ and $v=(v_1,v_2)$. As $w,w'\in D(\{u,v\})$, there exists $\gamma,\delta,\gamma',\delta'\in \mathbb{Q}\cap[0,1)$ such that $w=\gamma u+\delta v$ and $w'=\gamma' u+\delta' v$. Then $(\alpha+\gamma)u+(\beta+\delta)v=\gamma'u+\delta' v$. Since $u$ and $v$ are linearly independent, we obtain $0\le \alpha+\gamma=\gamma'<1$ and $0\le\beta+\delta=\delta'<1$, and this forces $\alpha=\beta=0$. 
\end{proof}

\begin{remark}
	The same result can be reached if we start with $\alpha u+w=\beta v+w'$, as we obtain $0\le \alpha+\gamma=\gamma'<1$ and $0\le\delta=\beta+\delta'<1$ to conclude the same goal.  
\end{remark}

\begin{theorem}\label{th:stratification-full-N2}
	Let $M$ be a simplicial full affine semigroup contained in $\mathbb{N}^2$ with $\operatorname{rank}(\operatorname{G}(M))=2$, and let $H$ be the Hilbert basis of $M$. Then there exists a partition of $H$, $H=H_1\cup \dots \cup H_k$, such that  for every $n\in M$, there exist unique $h_1,\ldots,h_k$, with $n=h_1+\dots+h_k$, 
	\begin{enumerate}
		\item $h_i\in \langle H_i\rangle$ for all $i$, and 
		\item for all $i\ge 2$, $h_i+\dots+h_k\in \Ap(M,H_1\cup \dots\cup H_{i-1})$.
	\end{enumerate}	
\end{theorem}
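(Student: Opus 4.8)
The plan is to imitate the iterative Apéry extraction illustrated in Example~\ref{ElliottExample} and in the numerical-semigroup case of the motivation: peel off an $\langle H_1\rangle$-part, then an $\langle H_2\rangle$-part, and so on, each step landing in a strictly smaller Apéry set, and then to read off uniqueness from Lemma~\ref{lem:unique-diamond}. Throughout, $H=H_1\cup\dots\cup H_k$ denotes the partition built above by successive elimination of extremal rays, so $H_i$ consists of two linearly independent vectors for $i<k$ and $H_k$ of one or two. Two elementary facts will be used repeatedly. First, since $(A\cup B)+M=(A+M)\cup(B+M)$, one has $\Ap(M,A\cup B)=\Ap(M,A)\cap\Ap(M,B)$, so the sets $\Ap(M,H_1\cup\dots\cup H_i)$ decrease with $i$. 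Second, because $H$ generates $M$, every nonzero element of $M$ lies in $H+M$, whence $\Ap(M,H)=\{0\}$; and if $x\in\Ap(M,H_1\cup\dots\cup H_i)$, then expressing $x$ as a sum of atoms shows no atom from $H_1\cup\dots\cup H_i$ can occur, so $x\in\langle H_{i+1}\cup\dots\cup H_k\rangle$.

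For existence I would induct, setting $w_0=n$ and, given $w_{i-1}\in\Ap(M,H_1\cup\dots\cup H_{i-1})$, extracting an $\langle H_i\rangle$-part by repeatedly subtracting elements of $H_i$ from $w_{i-1}$ while the result stays in $M$. This terminates (the coordinate sum strictly drops at each subtraction) at some $w_i$ with $h_i:=w_{i-1}-w_i\in\langle H_i\rangle$ and $w_i-g\notin M$ for all $g\in H_i$, i.e. $w_i\in\Ap(M,H_i)$. Moreover $w_i$ still lies in $\Ap(M,H_1\cup\dots\cup H_{i-1})$: if $w_i-g\in M$ for some $g\in H_1\cup\dots\cup H_{i-1}$, then $w_{i-1}-g=(w_i-g)+h_i\in M$, contradicting $w_{i-1}\in\Ap(M,H_1\cup\dots\cup H_{i-1})$. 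By the intersection identity, $w_i\in\Ap(M,H_1\cup\dots\cup H_i)$. After $k$ steps $w_k\in\Ap(M,H)=\{0\}$, so $n=h_1+\dots+h_k$ with $h_i\in\langle H_i\rangle$ and $h_i+\dots+h_k=w_{i-1}\in\Ap(M,H_1\cup\dots\cup H_{i-1})$, which is precisely conditions (1) and (2).

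For uniqueness, suppose $n=h_1+\dots+h_k=h_1'+\dots+h_k'$ are two representations satisfying (1)--(2), and set $w_i=h_{i+1}+\dots+h_k$ and $w_i'=h_{i+1}'+\dots+h_k'$, so $w_0=w_0'=n$ and, by (2), $w_j,w_j'\in\Ap(M,H_1\cup\dots\cup H_j)$ for $1\le j\le k-1$. I would prove $h_i=h_i'$ by induction on $i$, the hypothesis being $w_{i-1}=w_{i-1}'$ (trivially true for $i=1$). For $i\le k-1$, write $H_i=\{u_i,v_i\}$ and $h_i=s u_i+t v_i$, $h_i'=s'u_i+t'v_i$ with $s,t,s',t'\in\mathbb{N}$ (the representation is unique as $u_i,v_i$ are linearly independent). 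Then $(s-s')u_i+(t-t')v_i+w_i=w_i'$, and since Lemma~\ref{lem:ap-diamond} gives $w_i,w_i'\in\Ap(M,H_1\cup\dots\cup H_i)\subseteq D(H_i)$, Lemma~\ref{lem:unique-diamond} forces $s=s'$, $t=t'$ and $w_i=w_i'$, i.e. $h_i=h_i'$ and $w_i=w_i'$. For the terminal index $i=k$ the hypotheses give $w_k=w_k'=0$ in $\Ap(M,H)=\{0\}$, so $h_k=w_{k-1}=h_k'$; here no appeal to Lemma~\ref{lem:ap-diamond} (which excludes the index $k$) is needed, and a possibly singleton $H_k$ causes no difficulty.

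The argument is largely bookkeeping once the two lemmas are available. The step requiring the most care is the existence extraction, specifically the verification that peeling off an $\langle H_i\rangle$-part cannot re-enable subtraction of an earlier generator; this is exactly the point handled by the identity $\Ap(M,A\cup B)=\Ap(M,A)\cap\Ap(M,B)$ together with the reassembly $w_{i-1}-g=(w_i-g)+h_i$. The only other subtlety is the boundary behaviour at $i=k$, where $H_k$ may have a single generator and Lemma~\ref{lem:ap-diamond} no longer applies; this is dissolved by the observation that $\Ap(M,H)=\{0\}$ collapses the final remainder to $0$, making the last block automatic in both existence and uniqueness.
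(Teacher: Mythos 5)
Your proof is correct, and its uniqueness half is essentially the paper's own argument: induct along the blocks, feed the difference of the two $\langle H_i\rangle$-parts together with the two tails into Lemma~\ref{lem:unique-diamond}, with Lemma~\ref{lem:ap-diamond} guaranteeing that the tails lie in $D(H_i)$ for $i\le k-1$, and dispose of the terminal block via $\Ap(M,H)=\{0\}$. Where you genuinely diverge is the existence half. The paper gets $n=h_1+w_1$ by citing \cite[Lemma~1.4]{c-m} for the simplicial semigroup $M$, and then re-applies that lemma to the sub-semigroups $\langle H_i\cup\dots\cup H_k\rangle$ (simplicial with cone spanned by $H_i$); strictly speaking this produces a remainder in the Ap\'ery set of $H_i$ \emph{relative to the sub-semigroup}, which still has to be upgraded to Ap\'ery membership relative to $M$ --- a point the paper's later, more general Theorem~\ref{uniquerep2} handles explicitly. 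Your greedy extraction works directly in $M$: termination of the peeling (guaranteed by the strict drop of the coordinate sum in $\mathbb{N}^2$) gives $w_i\in\Ap(M,H_i)$ on the nose, and the reassembly identity $w_{i-1}-g=(w_i-g)+h_i$, combined with $\Ap(M,A\cup B)=\Ap(M,A)\cap\Ap(M,B)$, keeps $w_i$ in all the earlier Ap\'ery sets. This buys self-containedness (no appeal to the Cohen--Macaulay decomposition of \cite{c-m}) and sidesteps the sub-semigroup subtlety entirely, at no cost in this setting; your separate handling of the possibly singleton block $H_k$, where Lemma~\ref{lem:ap-diamond} is unavailable, is also correct and matches what the paper implicitly does.
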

\begin{proof}
	Let $n\in M$. As $M$ is a simplicial affine semigroup with $H_1$ its set of extremal rays, $n=h_1+w_1$ with $h_1\in \langle H_1\rangle$, and $w_1 \in \Ap(M,H_1)$ \cite[Lemma~1.4]{c-m}. Now $w_1\in \Ap(M,H_1)\subset \langle H_2\cup \dots \cup H_k\rangle$, and so it can be written as $w_1=h_2+w_2$, with $h_2\in \langle H_2\rangle$ and $w_2\in \Ap(M,H_2)$ (by construction, the affine semigroup  $\langle  H_2\cup \dots \cup H_k \rangle$ is simplicial and $\cone(H_2)=\cone( H_2\cup \dots \cup H_k)$). As $w_1\in \Ap(M,H_1)$, we also have that $w_2\in \Ap(M,H_1)$; hence $w_2\in \Ap(M,H_1\cup H_2)$. This forces $w_2\in \langle H_3 \cup \dots \cup H_k\rangle$. Notice that after a finite number of steps we obtain $n=h_1+\dots+ h_k$ with the $h_i$ fulfilling the conditions in the statement. 
	
	Let us proof the uniqueness. Assume that $n=h_1+\dots+h_k=h_1'+\dots+h_k'$ with $h_i$ and $h_i'$ fulfilling the conditions (1) and (2) for all $i$. Write $w_1=h_2+\dots+h_k$ and $w_1'=h_2'+\dots+h_k'$. This means that $w_1,w_1'\in \Ap(M,H_1)$, and thus by Lemma~\ref{lem:ap-diamond}, $w_1,w_1'\in D(H_1)$. Lemma~\ref{lem:unique-diamond} forces $h_1=h_1'$ and $w_1=w_1'$. We repeat the same argument with the equality $h_2+\dots+h_k=h_2'+\dots+h_k'$.
\end{proof}

Observe that $\langle H_i\cup \dots \cup H_k\rangle$ for $i>1$ needs not to be full. In Example~\ref{ElliottExample}, $\langle (1,3), (5,1), (3,2)\rangle$ is not a full semigroup (indeed the ``full closure'' $\operatorname {G}(\langle (1,3), (5,1), (3,2)\rangle)\cap \mathbb{N}^2$  is  $M$).

Elliott's approach uses generating functions of the monoid $N$, and gives explicit solution to his problem of describing solutions in a unique way in terms of a ``simple set of solutions'' for $c$ up to 10. Our perspective (Theorem~\ref{th:stratification-full-N2}) deals directly with the atoms of the monoid $M$, and works for any $c$.

\section{Strong atoms, pure elements and extremal rays} \label{sec:extraction}

Let us see how we can generalize the notion of successively taking $H_i$ in the former section. We will see that the elements in $H_i$ are closely related to what it is known in the literature as strong atoms.

Let $M$ be a monoid, and $a,b\in M$. We write $a\le_M b$ if $b-a\in M$. Note that as we are assuming that $M$ is cancellative and reduced, the binary relation $\le_M$ is a partial order on $M$. 

An element $x\in M\setminus \{0\}$ is \emph{pure} if, for any $y\in M\setminus\{0\}$ with $y\leq_M kx$ for some $k\in\mathbb N\setminus\{0\}$, it follows that $mx=ny$ for some $m,n\in\mathbb N\setminus \{0\}$.

A \emph{strong atom} is a non-unit element $x$ of $M\setminus \{0\}$, such that, if $y\leq_M kx$ for some $y\in M$ and $k\in \mathbb N$, then there exists $l\in \mathbb N$ such that $y=lx$ \cite[Definition~3.1]{cale}. Notice that if $x$ is a strong atom of $M$, then $nx$ admits a unique expression in terms of atoms of $M$, and thus $x$ is absolutely irreducible (see  \cite[Definition~7.1.3]{g-hk}). Also, observe that the notion of strong atom coincides with the concept of completely fundamental element given in \cite{stanley}.

A \emph{face} of a monoid $M$ is a submonoid $N$ of $M$ such that whenever $a+b\in N$ for $a,b\in M$, one gets $a,b\in N$.


\begin{lemma}\label{lem:strong-face}
	Let $a$ be an atom of the monoid $M$. Then $a$ is strong if and only if $\mathbb{N}a=\{ma \mid m\in \mathbb N\}$ is a face of $M$.
\end{lemma}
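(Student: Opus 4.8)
The plan is to unfold both definitions and observe that the ``extraction'' condition defining a strong atom is essentially a restatement of the defining property of a face. First I would record the trivial but necessary fact that $\mathbb N a$ is always a submonoid of $M$: it contains $0=0\cdot a$ and is closed under addition since $ma+na=(m+n)a$. Thus in both implications the only real content is the closure-under-divisors (``face'') condition, and the hypothesis that $a$ is an atom serves merely to guarantee that $a$ is a nonzero non-unit, so that the strong-atom terminology genuinely applies to it.

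For the forward implication I would assume $a$ is strong and take $b,c\in M$ with $b+c\in\mathbb N a$, say $b+c=ka$ for some $k\in\mathbb N$. Then $ka-b=c\in M$, which is exactly the statement $b\le_M ka$. Strongness now yields $l\in\mathbb N$ with $b=la$, so $b\in\mathbb N a$; applying the same argument to $c$ (which satisfies $ka-c=b\in M$, i.e.\ $c\le_M ka$) gives $c\in\mathbb N a$ as well. Hence $\mathbb N a$ is a face.

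For the converse I would assume $\mathbb N a$ is a face and suppose $y\le_M ka$ for some $y\in M$ and $k\in\mathbb N$. By definition $z:=ka-y\in M$, so $y+z=ka\in\mathbb N a$. The face property applied to $y+z\in\mathbb N a$ gives $y\in\mathbb N a$, that is $y=la$ for some $l\in\mathbb N$, which is precisely the strong-atom condition; since $a$ is an atom it is a nonzero non-unit, so $a$ is a strong atom.

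There is essentially no hard step here: each direction is a direct translation between the relation $b\le_M ka$ and the factorization $b+c=ka$ appearing in the face condition. The only points that need a moment's care are the degenerate case $k=0$ (where $y\le_M 0$ forces $y=0=0\cdot a$, using that $M$ is reduced and cancellative, so $l=0$ works) and the bookkeeping that the atom hypothesis supplies the nonzero non-unit requirement built into the definition of a strong atom.
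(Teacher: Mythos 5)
Your proof is correct and takes essentially the same route as the paper's: both directions are exactly the same direct translation between the divisibility $y\le_M ka$ (i.e.\ the existence of $z\in M$ with $y+z=ka$) and the factorization $b+c=ka$ in the face condition. Your additional bookkeeping --- verifying that $\mathbb{N}a$ is a submonoid, handling $k=0$, and noting that atomicity supplies the nonzero non-unit requirement --- is sound and merely makes explicit details the paper leaves implicit.
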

\begin{proof}
	Assume that $a$ is strong, and take $x,y\in M$ such that $x+y= n a$ for some nonnegative integer $n$. Then $x\le_M n a$, and by definition, $x=k a$ for some nonnegative integer $k$. The same holds for $y$.
	
	For the converse, assume that $x\le_M k a$ for some $x\in M$ and some nonnegative integer $k$. Then there exists $y\in M$ such that $x+y=k a$, and as $\mathbb{N}a$ is a face, both $x$ and $y$ are in $\mathbb{N}a$. In particular, $x= l a$ for some $l\in \mathbb{N}$.
\end{proof}

Obviously, a strong atom is a pure atom. The following lemma gives a condition for the reverse implication. Two different elements $x,y\in M\setminus \{0\}$ are called \emph{disjoint} if $\mathbb Nx\cap\mathbb Ny=\{0\}$.

\begin{lemma}\label{purestrong} Let $M$ be atomic such that any two different atoms are disjoint. Then an atom is strong precisely if it is pure.
\end{lemma}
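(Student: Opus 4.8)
The plan is to prove the two directions separately. The forward direction, that every strong atom is pure, is immediate and holds without any hypothesis: if $x$ is a strong atom and $y\le_M kx$ for some $k\ge 1$, then by definition $y=lx$ for some $l\in\mathbb{N}$, so $my=lmx$ for any $m$; taking $m=1$ and any $n=l$ (and noting $y\neq 0$ forces $l\neq 0$) gives the purity relation $ny=mx$ with $m,n\in\mathbb{N}\setminus\{0\}$. So the content of the lemma is the converse, and the extra hypotheses (atomicity, pairwise disjointness of atoms) are there precisely to make the converse go through.

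For the converse, I would assume $x$ is a pure atom and aim to show it is strong, i.e.\ that $y\le_M kx$ implies $y\in\mathbb{N}x$. So suppose $y\le_M kx$ with $y\in M$, $k\in\mathbb{N}$; I may assume $y\neq 0$ (the case $y=0$ is trivial). Purity gives $mx=ny$ for some $m,n\in\mathbb{N}\setminus\{0\}$. The strategy is now to decompose $y$ into atoms using atomicity, $y=a_1+\dots+a_r$, and to show that each atom appearing must equal $x$. The key point is that from $mx=ny=n(a_1+\dots+a_r)$ we get an equation in $M$ expressing a multiple of $x$ as a sum involving the $a_i$. I would then invoke Lemma~\ref{lem:strong-face} in reverse spirit, or argue directly: because $x$ is itself an atom and $mx$ has the trivial atomic factorization into copies of $x$, any atom $a_i$ that is disjoint from $x$ cannot contribute a positive multiple to an expression summing to a multiple of $x$. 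More precisely, the disjointness hypothesis $\mathbb{N}x\cap\mathbb{N}a_i=\{0\}$ for $a_i\neq x$ is exactly what rules out $a_i$ lying in the face $\mathbb{N}x$, so I want to leverage that $ny\le_M $ (a multiple of $x$) forces each $a_i\in\mathbb{N}x$, hence $a_i=x$.

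The cleanest way to organize this is to first establish that the relation $mx=ny$ together with $y\le_M kx$ places $y$ inside the face $\mathbb{N}x$: indeed, from $y\le_M kx$ we have $kx=y+z$ for some $z\in M$, and I want to conclude $y,z\in\mathbb{N}x$. If I already knew $\mathbb{N}x$ were a face I would be done by Lemma~\ref{lem:strong-face}, but that is what I am trying to prove, so I must bootstrap. The handle is the pairwise disjointness of atoms: writing $y=\sum a_i$ and $z=\sum b_j$ as sums of atoms, the equation $kx=\sum a_i+\sum b_j$ is an atomic expression of $kx$; but $kx$ also has the expression as $k$ copies of $x$. I then argue that any atom $a_i\neq x$ appearing on the left would, via the purity relation $ny=mx$, generate an element of $\mathbb{N}x\cap\mathbb{N}a_i$ different from $0$, contradicting disjointness. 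The main obstacle is precisely this step: extracting, from the multiplicative relation $mx=ny$ and the additive atomic decompositions, a genuine nonzero common element of $\mathbb{N}x$ and $\mathbb{N}a_i$. The care needed is that $ny=mx$ only tells us about the whole sum $ny$, not about an individual $a_i$, so I expect to need to isolate one atom's contribution—perhaps by a minimal-counterexample or induction-on-number-of-atoms argument, repeatedly subtracting copies of $x$ and using cancellativity—to force each $a_i=x$ and thereby conclude $y=rx\in\mathbb{N}x$, which is exactly strongness.
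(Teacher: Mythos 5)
There is a genuine gap at exactly the point you flag yourself: you apply the purity of $x$ only to the whole divisor $y$, obtaining $mx=ny$, and then you are stuck trying to pass from this relation about the \emph{sum} $ny=n(a_1+\dots+a_r)$ to a conclusion about each individual atom $a_i$. The route you sketch for closing this gap (comparing the two atomic expressions of $kx$, or a minimal-counterexample argument ``repeatedly subtracting copies of $x$ and using cancellativity'') is not shown to work, and it is doubtful that it can be made to work as stated: in a general atomic cancellative reduced monoid, atomic factorizations are far from unique, so the equation $kx=\sum_i a_i+\sum_j b_j$ carries no information forcing any $a_i$ to equal $x$, and subtracting copies of $x$ from $mx=n\sum_i a_i$ does not isolate any single $a_i$. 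As written, the proposal is a plan with its central step unproved.

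The missing idea is a one-line redirection of where purity is applied. Do not apply the definition of pure to $y$; apply it to each atom below $y$. If $a$ is any atom with $a\le_M y$ (such atoms exist by atomicity for $y\neq 0$), then by transitivity of $\le_M$ we have $a\le_M y\le_M kx$, so purity of $x$ \emph{directly} yields $m'x=n'a$ with $m',n'\in\mathbb N\setminus\{0\}$. This is a nonzero element of $\mathbb N x\cap\mathbb N a$, so the disjointness hypothesis immediately forces $a=x$ --- no induction, no cancellation, no comparison of factorizations. Hence $x$ is the only atom dividing $y$, and writing $y=a_1+\dots+a_r$ atomically (each $a_i\le_M y\le_M kx$) gives $a_i=x$ for all $i$, i.e.\ $y=rx$, which is strongness. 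This is precisely the paper's proof; the definition of pure quantifies over \emph{every} nonzero $y$ with $y\le_M kx$, and exploiting that quantifier at the level of atoms rather than at the level of the given divisor is what makes the disjointness hypothesis bite. Your treatment of the easy direction (strong implies pure) is fine and agrees with the paper's remark.
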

\begin{proof}
	Let $x$ be a pure atom and suppose $z\leq_M kx$. Take $y$ atom such that $y\leq_M z\leq_M kx$. Then $mx=ny$ with $m,n\in\mathbb N\setminus\{0\}$. Thus $\mathbb N x\cap\mathbb N y\neq\{0\}$ and by disjointness we must have $y=x$.
	
	This shows that $x$ is the only atom dividing $z$ which implies (as $M$ is atomic) that $z=lx$ for some $l\geq 1$. 
\end{proof}

Let $M$ be a monoid. The \emph{extraction grade} \cite[Definition page 149]{krause} for $x,y\in M\setminus\{0\}$ is
\[
\lambda_M (x,y)=\sup\{m/n \mid mx\leq_M ny, n,m\in\mathbb{N}, n\neq 0\}\in \mathbb R_+\cup\{\infty\}.
\]
The extraction grade $\lambda_M(x,y)$ measures in some sense how much of $x$ can be extracted from $y$.

The monoid $M$ is called an \emph{extraction monoid} if the extraction grade attains rational values, that is, for any $x,y\in M\setminus\{0\}$ there exist $m\in \mathbb Z_+$ and $n\in \mathbb N$ such that $mx\leq_M ny$ and $\lambda(x,y)=m/n$.

An extraction monoid $M$ is said to be of \emph{finite type} if for any $\{x_i\}_{i\in\mathbb{N}}\subseteq M$ the sequence $\mathrm{rad}(x_1)\subseteq\mathrm{rad}(x_2)\subseteq \ldots$ becomes stationary, where \[\mathrm{rad}(x)=\{y\in M\mid x\leq_M ny \mbox{ for some }n\in \mathbb N\}\] is the \emph{radical} of the principal ideal $(x)=x+M$.

\begin{theorem}\label{generatedbypure}
	Let $M$ be an extraction monoid of finite type.
	\begin{enumerate}
		\item[(i)] For each $x\in M\setminus\{0\}$ there exists $m\in \mathbb N\setminus \{0\}$ such that $mx$ is contained in a factorial submonoid of $M$ generated by pure elements of $M$.
		\item[(ii)] If in addition $M$ is atomic and any two different atoms are disjoint, then the factorial submonoid in (i) can be chosen to be generated by strong atoms of $M$. 
	\end{enumerate}
\end{theorem}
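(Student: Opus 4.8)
The plan is to recast purity and the whole extraction process in terms of the radicals $\mathrm{rad}(\cdot)$ and the attained extraction grade $\lambda_M(\cdot,\cdot)$, and then to peel off one pure element at a time, using finite type to force termination. I would first record the elementary facts that $\mathrm{rad}(kx)=\mathrm{rad}(x)$ for $k\ge 1$ and that $a\le_M b$ implies $\mathrm{rad}(b)\subseteq\mathrm{rad}(a)$, together with the translations $\lambda_M(p,x)>0\iff x\in\mathrm{rad}(p)$ (``$p$ can be extracted from $x$'') and ``$mp=nx$ for some $m,n\ge 1$'' $\iff$ ``$p,x$ are proportional''. The technical heart is the claim that, fixing $x\neq 0$ and choosing among all $q\ne 0$ with $x\in\mathrm{rad}(q)$ one element $p$ with $\mathrm{rad}(p)$ maximal (such $p$ exists precisely because $M$ is of finite type), the element $p$ is pure. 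I would argue by contradiction: non-purity produces $y\neq 0$ with $y\le_M kp$ and $y$ not proportional to $p$; then $\mathrm{rad}(p)\subseteq\mathrm{rad}(y)$, and since $x\in\mathrm{rad}(p)\subseteq\mathrm{rad}(y)$ maximality forces $\mathrm{rad}(y)=\mathrm{rad}(p)$. Writing the attained grade $\lambda_M(y,p)=a/b$ with $ay\le_M bp$ and putting $s=bp-ay\in M$, maximality of the grade gives $\lambda_M(y,s)=0$; but $s\le_M bp$ gives $\mathrm{rad}(p)\subseteq\mathrm{rad}(s)$, so $\mathrm{rad}(s)=\mathrm{rad}(y)$, whence $s\in\mathrm{rad}(s)=\mathrm{rad}(y)$ forces $\lambda_M(y,s)>0$ unless $s=0$; and $s=0$ means $bp=ay$, i.e.\ proportionality. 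Both alternatives contradict the choice of $y$.

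With this lemma the proof of (i) becomes an induction driven by finite type. Given $x\neq 0$ I would pick a pure $p_1$ with $x\in\mathrm{rad}(p_1)$, extract the attained grade to write $n_1x=m_1p_1+x_1$ with $x_1\in M$, and note $\lambda_M(p_1,x_1)=0$ (else $\lambda_M(p_1,x)$ was not maximal). Since $\lambda_M(p_1,x)>0$ one checks $x_1\notin\mathrm{rad}(x)$ while trivially $x_1\in\mathrm{rad}(x_1)$, so $\mathrm{rad}(x)\subsetneq\mathrm{rad}(x_1)$ whenever $x_1\neq 0$. Iterating on $x_1$ yields a strictly ascending chain of radicals, which must stop by finite type, so after finitely many steps the remainder is $0$; unwinding gives $mx=\sum_{i=1}^{t}c_ip_i$ with $m=\prod n_i$, all $c_i\in\mathbb{N}$, and all $p_i$ pure.

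For factoriality of $\langle p_1,\dots,p_t\rangle$ the crucial point is that $\lambda_M(p_j,q)=0$ for every $q\in\langle p_{j+1},\dots,p_t\rangle\setminus\{0\}$: every $p_i$ with $i>j$ lies below a multiple of the stage-$j$ remainder $x_j$ (since it was extracted from $x_j$ or a later remainder), so $q\le_M Nx_j$ for some $N$, and $\lambda_M(p_j,x_j)=0$ with monotonicity gives $\lambda_M(p_j,q)=0$. A nontrivial relation $\sum d_ip_i=0$ in $\operatorname{G}(M)$, split into positive and negative parts and cancelled in $M$, would place a positive multiple of the least-indexed $p_L$ below an element of $\langle p_{L+1},\dots,p_t\rangle$, forcing $\lambda_M(p_L,\cdot)>0$ there, a contradiction. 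Hence the $p_i$ are linearly independent over $\mathbb{Q}$ in $\operatorname{G}(M)$, so $\langle p_1,\dots,p_t\rangle$ is free, therefore factorial, and generated by the pure elements $p_i$.

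For (ii) I would only adjust which element is extracted at each step. As $M$ is atomic, each pure $p_j$ dominates an atom $a_j\le_M p_j$; purity of $p_j$ makes $a_j$ proportional to $p_j$, so $\mathrm{rad}(a_j)=\mathrm{rad}(p_j)$ stays maximal and $a_j$ is a pure atom. Under the disjointness hypothesis Lemma~\ref{purestrong} identifies pure atoms with strong atoms, so replacing each $p_j$ by $a_j$ (the grade $\lambda_M(a_j,x_{j-1})$ is still positive) and rerunning the extraction gives $mx\in\langle a_1,\dots,a_t\rangle$, a factorial submonoid generated by strong atoms. I expect the main obstacle to be the key lemma, where the maximality of the radical must be played against the maximality of the extraction grade; the second delicate point is the linear-independence argument, since it is exactly what upgrades ``generated by pure elements'' to ``\emph{factorial} and generated by pure elements''.
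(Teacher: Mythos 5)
Your proposal is correct, and the steps all check out: a maximal-radical competitor exists by finite type (the ACC on radicals yields maximal elements), it is pure by your interplay between maximality of $\operatorname{rad}(p)$ and attainedness of $\lambda_M(y,p)=a/b$ (note that $s=bp-ay$ is again a competitor for $x$, since $s\le_M bp\le_M bnx$, which is what licenses the equality $\operatorname{rad}(s)=\operatorname{rad}(y)$ you use), the chain $\operatorname{rad}(x)\subsetneq\operatorname{rad}(x_1)\subsetneq\cdots$ must stop by finite type, and your relation-splitting argument does prove freeness of $\langle p_1,\dots,p_t\rangle$, since $\lambda_M(p_j,q)=0$ for nonzero $q\in\langle p_{j+1},\dots,p_t\rangle$ kills any nontrivial relation. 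Where you differ from the paper is in self-containedness rather than in strategy: the paper proves (i) in one line by citing \cite[Corollary page 151]{krause}, and for (ii) it only writes out your reduction step --- below every pure element of an atomic monoid sits a pure atom, via the same proportionality computation $(mu)a=(nv)y$ that you use --- and then invokes Lemma~\ref{purestrong} together with \cite[Theorem 1]{krause} to obtain the factorial submonoid generated by strong atoms. In effect you have reconstructed the extraction algorithm of \cite{krause}, to which the paper's subsequent Remark only alludes, so your argument could serve as a stand-alone substitute for those citations; what the paper's route buys is brevity, what yours buys is a proof readable without the 1990 reference. Two small touch-ups: avoid the phrase ``linearly independent over $\mathbb{Q}$ in $\operatorname{G}(M)$'', since $\operatorname{G}(M)$ is not assumed torsion-free in this generality --- your actual cancellation argument never needs torsion-freeness and directly shows that $(d_1,\dots,d_t)\mapsto \sum_i d_ip_i$ is injective, which is all that factoriality requires; and in (ii) you need not rerun the algorithm at all: writing $u_jp_j=v_ja_j$ and multiplying $n_jx_{j-1}=m_jp_j+x_j$ by $u_j$ converts the original extraction data into $m'x\in\langle a_1,\dots,a_t\rangle$, with $\lambda_M(a_j,x_j)=0$ inherited from $\lambda_M(p_j,x_j)=0$ by proportionality, so the same freeness proof applies verbatim (though your rerunning variant, choosing a pure atom of maximal radical at each stage, works equally well).
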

\begin{proof}\hspace{2mm}
	\begin{enumerate}
		\item[(i)] This is proven in \cite[Corollary page 151]{krause}. (The operation of the monoid there is multiplication instead of addition.)
		\item[(ii)] Follows from (i) with the help of Lemma~\ref{purestrong}. 
		\begin{itemize}
			\item[(a)] Since $M$ is atomic, for each pure element there is a pure atom below it, as the following argument shows. Let $x$ be a pure element of $M$, and $a$ be an atom of $M$ with $a\le_M x$. Then by definition of pure element, there exist positive integers $u$ and $v$ such that $ua=vx$. Take now $y\in M$ and $k$ a positive integer with $y\le_M ka$. Then $y\le_M kx$, and since $x$ is pure, there exist positive integers $m$ and $n$ with $mx=ny$. But then, $(mu)a=m(vx)= (nv)y$, which proves that $a$ is a pure atom.
			\item[(b)] Let $z\in M\setminus\{0\}$. By (i) there exist $m\in\mathbb N\setminus\{0\}$ and $x$ pure such that $x\leq_M mz$. By (a) there exists a pure atom $a\leq_M x$ and, hence, $a\leq_M mz$. From Lemma \ref{purestrong} we get that $a$ is a strong atom. Let $A$ be the set of all strong atoms in $M$. From \cite[Theorem 1]{krause} it follows that some (positive) multiple of every element in $M\setminus\{0\}$ is contained in a factorial submonoid generated by finitely many elements from $A$. \qedhere
		\end{itemize}
	\end{enumerate}
\end{proof}

An important consequence of Theorem \ref{generatedbypure} (ii) is that, under the assumptions made, the monoid possesses enough strong atoms. As we will see later, in affine semigroups, pure atoms correspond to extremal rays (and if the monoid is root-closed, these also coincide with strong atoms).

In general, however, a monoid need not possess strong atoms. For example, the numerical semigroup $\langle2,3\rangle$, which is an extraction monoid of finite type, has the two atoms $2$ and $3$ none of which is strong. By Theorem \ref{generatedbypure}, therefore, $2$ and $3$ cannot be disjoint; indeed the corresponding rays both contain $6$. Of course, by the first part of Theorem \ref{generatedbypure} there exist enough pure elements; indeed all nonzero elements are. The above example also shows that pure atoms need not be strong. 

Observe that in Theorem \ref{generatedbypure} for two different elements, some multiple for each of them belongs to a factorial submonoid, but these submonoids are different from each other in general. The special case where those submonoids can be chosen to be the same is of particular interest. A monoid is called \emph{inside factorial} if there is a factorial monoid $S\subseteq M$ such that for each $x\in M\setminus\{0\}$ there exists $m\in\mathbb N\setminus\{0\}$ such that $mx\in S$, (in other words, $M$ is a root-extension of a factorial monoid). The set of atoms $\mathcal A(S)$ is called the \emph{base} of the inside factorial monoid $M$. 

Although related in a simple way to factorial monoids, inside factorial monoids can be quite complicated as will be exhibited in Section 5. It is easy to see that any nonzero submonoid of $\mathbb N$ is inside factorial. Numerical semigroups are special cases and can be quite tricky (for more on numerical semigroups see \cite{RG}; a simple case is the above example). This indicates that a geometrical view of monoids as cones, though very often helpful, has to be taken very carefully. 

One might think that the condition that any two atoms are disjoint is too restrictive. The following result shows that there is a large family of monoids with this property.

We say that $M$ is \emph{root-closed} if whenever $na\le_M nb$ for some $a,b\in M$ and some positive integer $n$, then $a\le_M b$ (equivalently, if $n(b-a)\in M$ for some positive integer $n$ and some $a,b\in M$, then $b-a\in M$). 

\begin{lemma}\label{disjoint-atoms-krull}
	Let $M$ be a reduced root-closed monoid. Then any two different atoms are disjoint.
\end{lemma}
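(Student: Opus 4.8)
The plan is to argue by contradiction. Suppose $a$ and $b$ are two distinct atoms of $M$ that are \emph{not} disjoint, so that $\mathbb N a\cap\mathbb N b\neq\{0\}$. Then there is a nonzero element common to both, which means we can find integers $m,n$ (both necessarily at least $1$, since the common value is nonzero and $a,b\neq 0$) with $ma=nb$. Because disjointness is symmetric in $a$ and $b$, and the conclusion I am aiming for ($a=b$) is symmetric as well, I may assume without loss of generality that $n\ge m$.

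Next I would pass to the quotient group $\operatorname{G}(M)$ and rearrange $ma=nb$ into a form to which root-closedness applies. Subtracting $mb$ from both sides gives $m(a-b)=(n-m)b$ in $\operatorname{G}(M)$. Since $n\ge m$, the right-hand side $(n-m)b$ is a nonnegative multiple of $b\in M$ and hence lies in $M$; therefore $m(a-b)\in M$. Now root-closedness, in the equivalent formulation stated above (applied with the pair $b,a$ and exponent $m$), yields $a-b\in M$, that is, $b\le_M a$.

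To finish, I would invoke that $a$ is an atom. Writing $a=b+(a-b)$ with $b\in M\setminus\{0\}$ and $a-b\in M$, the definition of an atom forces one of the two summands to be a unit; as $M$ is reduced, the only unit is $0$, and since $b\neq 0$ we must have $a-b=0$, i.e.\ $a=b$. This contradicts the assumption that $a$ and $b$ are distinct, completing the argument.

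The step I expect to require the most care is the passage from $ma=nb$ to a usable comparison. The naive route---dividing out $\gcd(m,n)$ to get coprime exponents and then cancelling---implicitly assumes that $\operatorname{G}(M)$ is torsion-free, which is not available here. The trick that sidesteps this is to avoid cancellation entirely and feed the difference $m(a-b)=(n-m)b\in M$ directly into root-closedness; this uses only that $M$ embeds in $\operatorname{G}(M)$ and the defining property of root-closed monoids, both of which are at hand.
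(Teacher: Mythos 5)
Your proof is correct and takes essentially the same route as the paper: both reduce $ma=nb$ (with $m\le n$) to a single application of root-closedness to conclude $b\le_M a$, and then use that $a$ is an atom of a reduced monoid to force $a-b=0$. The only cosmetic difference is that the paper applies root-closedness in the form $nb\le_M na\Rightarrow b\le_M a$ with exponent $n$, whereas you use the stated equivalent formulation $m(a-b)\in M\Rightarrow a-b\in M$ with exponent $m$; this changes nothing of substance.
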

\begin{proof}
	Let $x$, $y$ be atoms such that $\mathbb N x\cap \mathbb N y\neq \{0\}$, that is $mx=ny$, for some $m,n\in\mathbb N\setminus\{0\}$. Without loss of generality, assume that $m\leq n$. Then $ny\leq_M nx$, and since $M$ is root-closed, we have $y\leq_M x$. Since $x$ and $y$ are atoms and $M$ is reduced, we get $y=x$. Therefore, any two different atoms must be disjoint.
\end{proof}

The converse of this result is not true. Take for instance the submonoid of $\mathbb{N}^2$ generated by $(2,0)$, $(1,1)$ and $(0,3)$. Any two atoms are disjoint. As $(0,1)=(0,3)-2(1,1)+(2,0)$, we have that $(0,1)\in \operatorname{G}(M)$, $3(0,1)\in M$ but $(0,1)\not\in M$. So $M$ is not root closed.

In light of Lemmas \ref{purestrong} and \ref{disjoint-atoms-krull}, strong and pure atoms coincide in root-closed atomic monoids. 

\begin{corollary}
	In a root-closed, atomic monoid, an atom is pure if and only if it is strong.	
\end{corollary}

Recall that a monoid is a \emph{Krull monoid} if $M=\{x\in \operatorname{G}(M)\mid f(x)\ge 0\hbox{ for all } f\in F\}$, for some set $F$ of nonzero group homomorphisms from $\operatorname{G}(M)$ to $\mathbb{Z}$, such that the set $\{f\in F\mid f(x)\neq 0\}$ is finite for every $x\in \operatorname{G}(M)$ (see \cite[Section~2.3]{g-hk} for alternative characterizations). In particular, Krull monoids are root-closed. In the finitely generated case with torsion free quotient group, root-closed and Krull monoids coincide (see \cite[Proposition~2]{KL} and \cite[Theorem~2.7.14]{g-hk}).  
Root-closed inside factorial monoids are rational generalized Krull monoids with torsion t-class group \cite[Theorem 3]{inside}.

\begin{corollary}\label{ForKrull}
	If $M$ is a Krull monoid, then for each $x\in M\setminus\{0\}$ there exists $m\in\mathbb N\setminus\{0\}$ such that $mx$ is contained in a factorial submonoid of $M$ generated by strong atoms of $M$.
\end{corollary}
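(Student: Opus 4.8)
The plan is to deduce Corollary~\ref{ForKrull} from Theorem~\ref{generatedbypure}(ii) by checking that every Krull monoid satisfies the hypotheses of that part, namely that $M$ is an extraction monoid of finite type, is atomic, and has the property that any two distinct atoms are disjoint. The disjointness is immediate: as noted in the excerpt, Krull monoids are root-closed, so Lemma~\ref{disjoint-atoms-krull} applies directly and gives that any two different atoms are disjoint. This is the cheapest of the three conditions to verify, so I would dispatch it first.

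Next I would establish that a Krull monoid is atomic and, more substantially, that it is an extraction monoid of finite type. For atomicity, the defining divisor-theoretic description $M=\{x\in \operatorname{G}(M)\mid f(x)\ge 0 \text{ for all } f\in F\}$ with the finiteness condition on $\{f\in F\mid f(x)\neq 0\}$ equips each $x$ with a finite multiset of nonnegative ``valuations'' $(f(x))_{f\in F}$, and a standard well-ordering/induction on $\sum_{f\in F} f(x)$ shows every nonzero element factors into atoms. For the extraction grade, I would compute $\lambda_M(x,y)$ explicitly in terms of these homomorphisms: the condition $mx\le_M ny$ is equivalent to $f(mx)\le f(ny)$ for all $f\in F$, i.e. $m f(x)\le n f(y)$, so $\lambda_M(x,y)=\inf\{f(y)/f(x)\mid f\in F,\ f(x)\neq 0\}$, an infimum of finitely many rationals, hence rational and attained. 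This proves $M$ is an extraction monoid. For finite type, I would show $\operatorname{rad}(x)=\{y\mid \operatorname{supp}(y)\supseteq S\}$ for the appropriate finite set $S=\{f\in F\mid f(x)>0\}$ of coordinates, so an ascending chain $\operatorname{rad}(x_1)\subseteq \operatorname{rad}(x_2)\subseteq\cdots$ corresponds to a descending chain of finite subsets of $F$, which must stabilize.

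With all three hypotheses of Theorem~\ref{generatedbypure}(ii) in place, the conclusion is immediate: for each $x\in M\setminus\{0\}$ there exists $m\in\mathbb N\setminus\{0\}$ such that $mx$ lies in a factorial submonoid of $M$ generated by strong atoms, which is exactly the statement of the corollary.

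The main obstacle I anticipate is the finite-type verification, specifically making precise how the radical $\operatorname{rad}(x)$ is controlled by a finite subset of $F$ and arguing that the stabilization of the chain $\operatorname{rad}(x_1)\subseteq\operatorname{rad}(x_2)\subseteq\cdots$ follows. The subtlety is that the index set $F$ may be infinite, so I cannot simply invoke a descending chain condition on subsets of a finite set; instead I would use that each individual $\operatorname{rad}(x_i)$ is determined by the finite support $\{f\in F\mid f(x_i)\neq 0\}$ and show that the inclusions $\operatorname{rad}(x_i)\subseteq\operatorname{rad}(x_{i+1})$ translate into reverse inclusions of these finite supports, so that the supports form a decreasing sequence of finite sets and must eventually be constant. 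Everything else — disjointness via root-closedness, rationality and attainment of the extraction grade, and atomicity — should follow cleanly from the valuation description of $M$.
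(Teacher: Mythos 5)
Your proposal is correct and takes essentially the same route as the paper: its proof likewise deduces the corollary from Theorem~\ref{generatedbypure}(ii) together with Lemma~\ref{disjoint-atoms-krull}, after noting that a Krull monoid is atomic, root-closed, and an extraction monoid of finite type. The only difference is that where the paper simply cites \cite[Section~4]{krause} for the extraction-monoid-of-finite-type property, you verify it (and atomicity) directly from the divisor-theoretic description, and your computations --- the formula $\lambda_M(x,y)=\min\{f(y)/f(x)\mid f\in F,\ f(x)>0\}$ and the characterization $\operatorname{rad}(x)=\{y\in M\mid \{f\in F\mid f(x)>0\}\subseteq\{f\in F\mid f(y)>0\}\}$, whose finite supports decrease along the chain and hence stabilize --- are sound.
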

\begin{proof}
	Being a Krull monoid, $M$ is atomic and root-closed. Furthermore, $M$ is an extraction monoid of finite type (see \cite[Section~4]{krause}). The corollary follows from part (ii) of Theorem~\ref{generatedbypure} and Lemma~\ref{disjoint-atoms-krull}.
\end{proof}

\begin{remark}\hspace{2mm}
	The representation of $mx$  in Theorem~\ref{generatedbypure} may be viewed as a version for monoids of Caratheodory's Theorem in convex analysis.
	
	The representation can be obtained by the extraction algorithm as on \cite[Algorithm page 150]{krause}.
	
	Corollary~\ref{ForKrull} has been recently proven by G. Angerm\"uller in \cite[Theorem~1 (a)]{augermuller}, in a slightly different language. His proof uses, beside extraction, the  divisor theory for Krull monoids. Observe that Theorem~\ref{generatedbypure} (ii), does not require $M$ to be root-closed. Actually, it might be applied to submonoids of Krull monoids which in general are not root-closed.
\end{remark}



Examples of extraction monoids of finite type that are not necessarily Krull monoids are given in the next section.

\subsection{Affine semigroups}\label{sec:affine}

We see in this section how the concepts of pure and strong atoms translate in the scope of affine semigroups, and how the extraction grade can be computed for these monoids.

Recall that an \emph{affine semigroup} is a finitely generated submonoid of $\mathbb{N}^d$ for some positive $d$. According to Grillet's Theorem this is equivalent to being isomorphic to a finitely generated cancellative, reduced and torsion-free monoid (\cite{grillet}, see also \cite[Theorem 3.11]{fg}).

For a subset $S$ of an affine semigroup $M\subseteq \mathbb N^n$ denote by \[\cone(S)=\left\{\sum_{i=1}^k r_is_i\mid r_i\in\mathbb Q_+, s_i\in S\right\}\] the cone in the $\mathbb Q$-vector space $\mathbb Q^n$ generated by $S$.

\begin{lemma}\label{characterization}
	Every affine semigroup is an extraction monoid of finite type.
\end{lemma}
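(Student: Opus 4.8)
The plan is to establish the two defining properties of an extraction monoid of finite type (Definition from the previous section) for an arbitrary affine semigroup $M\subseteq\mathbb N^n$. First I would verify that $M$ is an \emph{extraction monoid}, meaning that the extraction grade $\lambda_M(x,y)$ is always a rational number attained by some pair $(m,n)$. The key geometric idea is that $mx\le_M ny$ means $ny-mx\in M$, and since $M$ is full inside its own cone only up to finite index, the condition $ny-mx\in M$ for large $m,n$ is governed by whether $ny-mx$ lies in $\cone(M)$. More precisely, I would argue that
\[
\lambda_M(x,y)=\sup\{m/n\mid mx\le_M ny\}
\]
coincides with the largest $\rho\in\mathbb Q_+\cup\{\infty\}$ such that $y-\rho x\in\cone(M)$, i.e.\ with the supremum of ratios for which $ny-mx$ stays in the rational cone. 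Because $\cone(M)$ is a rational polyhedral cone (being generated by the finitely many generators of $M$, which lie in $\mathbb N^n$), this supremum is determined by the finitely many facet-defining linear inequalities with rational (indeed integer) coefficients, so the supremum is a rational number $m/n$; if $x$ lies in a proper face not met by $y$ the value is $\infty$.

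The step requiring genuine care is to pass from ``$ny-mx\in\cone(M)$'' back to ``$ny-mx\in M$'', so that the rational supremum is actually \emph{attained} by elements of the monoid rather than merely by points of the cone. Here I would use that $M$, being an affine semigroup, differs from $\operatorname{G}(M)\cap\cone(M)$ by a bounded defect: the normalization $\widetilde M=\operatorname{G}(M)\cap\cone(M)$ is again an affine semigroup, finitely generated over $M$, and there is a conductor-type element guaranteeing that every sufficiently deep lattice point of the cone lies in $M$. Concretely, if the optimal ratio is $\rho=p/q$ in lowest terms and is attained on the boundary of the cone, I would clear denominators and take a large enough multiple $N$ so that $N(qy-px)$ lands in $M$; combined with the definition of the supremum this pins down $\lambda_M(x,y)$ as a rational value genuinely realized by a monoid inequality $m'x\le_M n'y$. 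I expect this attainment argument — separating the strict-inequality interior case from the boundary case where the optimum sits on a facet — to be the main technical obstacle.

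Finally I would check the \emph{finite type} condition: for any sequence $\{x_i\}\subseteq M$ the chain of radicals $\mathrm{rad}(x_1)\subseteq\mathrm{rad}(x_2)\subseteq\cdots$ stabilizes. The observation here is that $\mathrm{rad}(x)=\{y\in M\mid x\le_M ny\text{ for some }n\}$ depends only on which faces of $\cone(M)$ contain $x$ in their relative interior; equivalently $y\in\mathrm{rad}(x)$ iff $\operatorname{supp}$-type data (the minimal face of the cone containing $x$) is contained in the analogous data for $y$. Since a rational polyhedral cone has only finitely many faces, there are only finitely many distinct radicals, so any ascending chain of radicals can strictly increase only finitely often and must become stationary. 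Assembling these three verifications — rationality of $\lambda_M$, attainment in $M$, and the finite-face bound on radicals — yields that every affine semigroup is an extraction monoid of finite type.
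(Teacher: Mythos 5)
Your overall strategy coincides with the paper's: describe $\cone(M)$ by finitely many rational linear functionals $v_1,\dots,v_k$ via the Farkas--Minkowski--Weyl theorem, identify $\lambda_M(x,y)$ with $\min\{v_i(y)/v_i(x)\mid v_i(x)>0\}$, and deduce finite type from the fact that $\operatorname{rad}(x)$ depends only on the index set $I(x)=\{i\mid v_i(x)>0\}$ (equivalently, on the minimal face of the cone containing $x$), of which there are only finitely many. Those two steps are sound and are essentially what the paper does. (One aside is off: since $M\subseteq\mathbb N^n$, the cone is pointed, so for $x\neq 0$ some $v_i(x)>0$ and $\lambda_M(x,y)$ is never $\infty$ in this setting.)

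The genuine gap is in your attainment step, precisely where you flag ``the main technical obstacle''. The conductor for the normalization $\widetilde M=\operatorname{G}(M)\cap\cone(M)$ is an \emph{additive} statement --- there exists $c$ with $c+\widetilde M\subseteq M$ --- and it cannot produce a multiple $N(qy-px)\in M$ when the optimum sits on a facet. And it always does: if $p/q=v_{i_0}(y)/v_{i_0}(x)$ is the minimizing ratio, then $v_{i_0}(qy-px)=0$, so every multiple $N(qy-px)$ stays on the facet $v_{i_0}=0$ and never becomes ``sufficiently deep''; indeed $N(qy-px)-c\notin\cone(M)$ for any $c$ with $v_{i_0}(c)>0$, so the conductor never applies, for any $N$. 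The paper closes this step with a more elementary observation that requires neither the normalization nor an interior/boundary case split: since $sy-rx\in\cone(\{x_1,\dots,x_d\})$, one can write $sy-rx=\sum_{i=1}^d r_ix_i$ with $r_i\in\mathbb Q_+$, and clearing the denominators of the \emph{coefficients} $r_i$ (not merely of the ratio $r/s$) yields $k(sy-rx)\in\langle x_1,\dots,x_d\rangle=M$, that is, $krx\le_M ksy$, so $\lambda_M(x,y)=kr/(ks)=r/s$ is genuinely attained. Replacing your conductor paragraph by this clearing-of-coefficient-denominators argument repairs the proof and makes it coincide with the paper's.
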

\begin{proof}
	Suppose $M\subseteq \mathbb N^n$ is generated by the atoms $x_1,\ldots, x_d$. Consider $C=\cone(\{x_1,\ldots ,x_d\})=\{r_1x_1+\cdots +r_dx_d \mid r_i\in \mathbb Q_+\}$, the cone spanned by $\{x_1,\dots,x_d\}$. By the Farkas-Minkowski-Weyl Theorem (see for instance \cite[Section~7.2]{schrijver}) for vector spaces over an ordered field, $C$ is the intersection of finitely many half-spaces. That is,
	\[
	C=\left\{x\in\mathbb Q^d\mid v_i(x)\geq 0\mbox{ for }i\in\{1,\ldots, k\}\right\},
	\]
	where $v_i\colon \mathbb Q^d\rightarrow \mathbb Q$ is $\mathbb{Q}$--linear.
	
	We shall show that $$\lambda_M(x,y)=\min\{v_i(y)/v_i(x)\mid v_i(x)> 0, i\in\{1,\ldots ,k\}\}.$$
	
	The minimum on the right hand side is in $\mathbb Q_+$, whence equal to some $r/s$ with $r\in\mathbb N$, $s\in\mathbb N\setminus \{0\}$. Let $x,y\in M\setminus \{0\}$, $mx\leq_M ny$. Then, $ny=mx+z$ for some $z\in M$, and thus $n v_i(y)=m v_i(x)+ v_i(z)$, which implies $n v_i(y)\geq m v_i(x)$. Therefore $\frac{m}n\leq\frac{v_i(y)}{v_i(x)}$ for all $i$ such that $v_i(x)> 0$. This shows $\lambda_M(x,y)\leq r/s$. Conversely, $\frac{r}s\leq \frac{v_i(y)}{v_i(x)}$ for any $i$ with $v_i(x)>0$ (which is also true for any $i$ with $v_i(x)=0$ since $v_i(y)\geq 0$, because $y\in M\subseteq C$) and, hence, $sy-rx\in C$. By definition of $C$, $sy-rx=\sum_{i=1}^d r_ix_i$, with $r_i\in \mathbb Q_+$, for all $i$. There exists $k\in\mathbb N\setminus\{0\}$ such that $k(sy-rx)\in M$, that is $krx\leq_M ksy$ which implies $\lambda_M(x,y)\geq \frac{kr}{ks}=\frac{r}s$. Thus, we have $\lambda_M(x,y)=\frac{r}s$.
	
	To see that $M$ is of finite type, consider $I(x)=\{i\in\{1,\dots,k\} \mid v_i(x)>0\}$. From $\lambda_M(x,y)=\min\{v_i(y)/v_i(x)\mid v_i(x)>0\}$ we obtain that $\lambda_M(x,y)>0$ if and only if $I(x)\subseteq I(y)$. From the definition of $\operatorname{rad}(x)$, it follows that $\operatorname{rad}(x)\subseteq \operatorname{rad}(y)$ if and only if there exists a positive integer $m$ such that $y\le_M mx$, which in turn is equivalent to $\lambda_M(y,x)>0$. Putting all this together, we deduce that $\operatorname{rad}(x)\subseteq \operatorname{rad}(y)$ if and only if $I(x)\subseteq I(y)$. Since the sets $I(*)$ are finite, any sequence $\dots \subseteq I(x_2)\subseteq I(x_1)$ must become stationary, that is, $M$ is of finite type.	
\end{proof}

\begin{theorem}\label{propertiesofgenerated}
	Let $M$ be an affine semigroup and $A$ a finite set of atoms in $M$. The submonoid $S$ of $M$ generated by $A$ has the following properties.
	\begin{itemize}
		\item[(i)] For each $x\in S\setminus \{0\}$ there exists $m\in\mathbb N\setminus\{0\}$ such that $mx$ is contained in a factorial submonoid of $S$ generated by pure atoms of $S$.
		\item[(ii)] If any two different atoms in $A$ are disjoint, then the factorial submonoid in (i) can be chosen to be generated by strong atoms of $S$. 
	\end{itemize}
\end{theorem}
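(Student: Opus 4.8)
The plan is to recognize that $S$ is itself an affine semigroup and then run the machinery of Theorem~\ref{generatedbypure} entirely inside $S$, being careful that purity, strongness and disjointness are read with respect to $\le_S$ (not $\le_M$).

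First I would carry out the ambient reductions. Since $A$ is finite and $S=\langle A\rangle\subseteq M\subseteq\mathbb N^n$, the monoid $S$ is a finitely generated submonoid of $\mathbb N^n$, hence an affine semigroup; it is automatically reduced, cancellative and torsion-free. By Lemma~\ref{characterization}, $S$ is an extraction monoid of finite type, and being an affine semigroup it is atomic. The crucial bookkeeping step is to identify the atoms of $S$, namely $\mathcal A(S)=A$. Indeed, the minimal generating set $\mathcal A(S)$ is contained in any generating set, so $\mathcal A(S)\subseteq A$; conversely, if some $a\in A$ factored as $a=s_1+s_2$ with $s_1,s_2\in S\setminus\{0\}\subseteq M\setminus\{0\}$, this would contradict $a$ being an atom of $M$, so each $a\in A$ is an atom of $S$ and $A\subseteq\mathcal A(S)$. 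From here all arguments take place inside $S$.

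For (i), I would apply Theorem~\ref{generatedbypure}(i) to $S$, which provides for each nonzero $x$ a multiple lying in a factorial submonoid generated by pure \emph{elements} of $S$. To upgrade ``pure elements'' to ``pure atoms'' I would reuse the argument of Theorem~\ref{generatedbypure}(ii)(a): since $S$ is atomic, every pure element of $S$ has a pure atom of $S$ below it. Consequently every nonzero $z\in S$ admits a multiple $mz$ with a pure atom $a\le_S mz$ (take a pure generator $p\le_S mz$ of the factorial submonoid containing $mz$, which exists as $mz$ is a nonzero non-unit of a reduced factorial monoid, then a pure atom below $p$). Feeding the set of pure atoms of $S$ into \cite[Theorem~1]{krause}, exactly as in the proof of Theorem~\ref{generatedbypure}(ii)(b) but stopping at pure atoms, yields that a multiple of every nonzero element lies in a factorial submonoid generated by finitely many pure atoms. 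No disjointness is needed here, since disjointness is precisely what the passage from pure to strong will consume.

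For (ii), the hypothesis bears on the elements of $A=\mathcal A(S)$, that is on the atoms of $S$, and disjointness $\mathbb N a\cap\mathbb N b=\{0\}$ is intrinsic, so Lemma~\ref{purestrong} applies to $S$ and the pure atoms produced in (i) are in fact strong atoms of $S$; equivalently one may invoke Theorem~\ref{generatedbypure}(ii) directly for $S$. The factorial submonoid of (i) is then generated by strong atoms, which is (ii). The main obstacle is the identification $\mathcal A(S)=A$ together with the care needed to interpret purity and strongness relative to $\le_S$ rather than $\le_M$; once that is pinned down the statement is a specialization of Theorem~\ref{generatedbypure}, the only genuine extra work being the atom-refinement in (i), handled by the atomicity argument already isolated in the proof of Theorem~\ref{generatedbypure}(ii)(a).
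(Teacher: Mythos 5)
Your proof is correct and takes essentially the same route as the paper, whose proof simply observes that $S$ is an extraction monoid of finite type by Lemma~\ref{characterization} and applies Theorem~\ref{generatedbypure}(i) and (ii) to $S$. Your additional steps---verifying $\mathcal{A}(S)=A$, reading purity, strongness and disjointness with respect to $\le_S$, and refining pure elements to pure atoms via the atomicity argument from part (ii)(a) of Theorem~\ref{generatedbypure}---are sound and merely make explicit details that the paper's two-line proof leaves implicit (in particular the passage from the ``pure elements'' of Theorem~\ref{generatedbypure}(i) to the ``pure atoms'' claimed in statement (i)).
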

\begin{proof}
	Being finitely generated, $S$ is an extraction monoid of finite type by Lemma~\ref{characterization}. Theorem~\ref{generatedbypure}(i) implies property (i). Property (ii) follows from Theorem~\ref{generatedbypure}(ii) applied to $S$. 
\end{proof}	
\begin{remark} The representations in Theorem~\ref{propertiesofgenerated} can be obtained by the extraction algorithm. Part (i) applies also to numerical semigroups. In that case, however, there is no representation as in part (ii). Indeed, different atoms are never disjoint.
\end{remark}


Let $M\subseteq \mathbb{N}^n$ be an affine semigroup. Recall that we say that $M$ is \emph{full} (or saturated) if $\operatorname{G}(M)\cap \mathbb{N}^n=M$. Observe that, in such case, for any $x,y\in M$, $x\le y$ if and only if $x\le_M y$ (here $\le$ denotes the usual partial ordering on $\mathbb{N}^n$, that is, $a\le b$ if $b-a\in \mathbb{N}^n$). It is easily seen that ``full'' implies ``root-closed'': if $na\le_M nb$ for some $a,b\in M$ and a positive integer $n$, then $na\le nb$, and consequently $a\le b$, which implies $a\le_M b$.

The set $\mathcal{A}(M)$ is known in the literature as a \emph{Hilbert basis} of $M$. Notice that $M$ corresponds with the set of non-negative integer solutions of the defining equations of $\operatorname{G}(M)$.

Every full affine semigroup is isomorphic to a Krull monoid \cite[Proposition~2]{KL}.

We say that an atom $a$ of $M$ is a \emph{extremal ray} if $\mathbb Q_+ a$ is a face of $\cone(M)$.

\begin{remark}
	Let $a$ be an atom that is not an extremal ray, and denote by $A$ the set of atoms of the monoid $M$. Then, there exists $x,y\in \cone(M)$ such that $x+y\in \mathbb Q_+a$ and $x\notin \mathbb Q_+a$. Let $\lambda\in \mathbb{Q}_+$ be such that $x+y=\lambda a$. Assume that $y\in \mathbb Q_+a$. Then $y=\mu a$ for some $\mu\in\mathbb Q_+$, and $x+y=x+\mu a=\lambda a$. This implies that $x=(\lambda-\mu)a\in\mathbb Q_a$, and as $x\in\cone(M)$, we deduce that $x\in \mathbb{Q}_+a$, a contradiction. Hence $y\notin \mathbb Q_+a$. Write $x=\sum_{\bar a\in A} \lambda_{\bar a} \bar a$, $y=\sum_{\bar a\in A} \mu_{\bar a} \bar a$, $\lambda_{\bar a}, \mu_{\bar a}\in \mathbb Q_+$. If $\lambda_a>0$, $(x-\lambda_aa)+y=(\lambda-\lambda_a)a\in \mathbb Q_+a$ and $x-\lambda_aa\notin \mathbb Q_+a$. So we may assume that $\lambda_a=0=\mu_a$. This implies that $a\in\sum_{\bar a\in A\setminus\{a\}}\mathbb Q_+\bar a$, and $\cone(M)=\cone(A)=\cone(A\setminus \{a\})$. This shows that the cone spanned by $M$ equals the cone spanned by its extremal rays.
\end{remark}

\begin{lemma}\label{lem:pure-face}
	Let $M$ be an affine semigroup and let $x$ be an atom of $M$. Then $x$ is pure if and only if it is an extremal ray.
\end{lemma}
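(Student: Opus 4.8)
The plan is to prove the two implications directly from the definitions, exploiting two facts: that $\cone(M)$ is a \emph{pointed} rational cone (since $M\subseteq\mathbb N^n$, so $\cone(M)\subseteq\mathbb Q_+^n$ contains no line), and that the relation $mx=ny$ in the torsion-free group $\operatorname{G}(M)\leq\mathbb Z^n$ is equivalent to the geometric statement $y\in\mathbb Q_+x$. Thus ``$x$ pure'' says precisely: every $y\in M\setminus\{0\}$ lying below some $kx$ must lie on the ray $\mathbb Q_+x$; and ``$x$ extremal ray'' says $\mathbb Q_+x$ is a face of $\cone(M)$.

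For the direction \emph{extremal ray} $\Rightarrow$ \emph{pure}, I would assume $\mathbb Q_+x$ is a face and take $y\in M\setminus\{0\}$ with $y\leq_M kx$ for some $k\in\mathbb N\setminus\{0\}$. Then $kx=y+(kx-y)$ with both $y$ and $kx-y$ in $M\subseteq\cone(M)$. Since $kx\in\mathbb Q_+x$ and $\mathbb Q_+x$ is a face of $\cone(M)$, both summands lie in $\mathbb Q_+x$; in particular $y=\mu x$ for some $\mu\in\mathbb Q_+$, with $\mu>0$ because $y\neq 0$. Writing $\mu=m/n$ gives $ny=mx$ with $m,n\in\mathbb N\setminus\{0\}$, so $x$ is pure.

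For the converse \emph{pure} $\Rightarrow$ \emph{extremal ray}, I would verify the face condition: take $p,q\in\cone(M)$ with $p+q=\lambda x\in\mathbb Q_+x$ and show $p,q\in\mathbb Q_+x$. The cases $\lambda=0$ (pointedness forces $p=q=0$) and $p=0$ or $q=0$ are immediate, so assume $\lambda>0$ and $p,q\neq 0$. Here I would clear denominators: since $p,q$ are finite $\mathbb Q_+$-combinations of the generators of $M$ and $\lambda\in\mathbb Q_+$, I can choose $D\in\mathbb N\setminus\{0\}$ with $Dp,Dq\in M$ and $D\lambda\in\mathbb N$. Then $Dp,Dq\in M\setminus\{0\}$ and $Dp+Dq=(D\lambda)x\in M$, so $Dp\leq_M(D\lambda)x$. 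Applying purity of $x$ to $Dp$ yields $m'x=n'(Dp)$ for positive integers $m',n'$, hence $Dp\in\mathbb Q_+x$ and so $p\in\mathbb Q_+x$; the symmetric argument gives $q\in\mathbb Q_+x$. This shows $\mathbb Q_+x$ is a face, i.e.\ $x$ is an extremal ray.

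The main obstacle is that $M$ need not be full, so in the converse one cannot simply conclude membership in $M$ from membership in $\cone(M)$; the denominator-clearing step is exactly the device that keeps $Dp$ and $Dq$ inside $M$ while producing the \emph{integer} multiple $(D\lambda)x$ needed to invoke the definition of purity. The forward direction is comparatively routine once one observes that the face property directly delivers $y\in\mathbb Q_+x$.
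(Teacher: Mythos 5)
Your proof is correct and follows essentially the same route as the paper's: the extremal-ray-to-pure direction writes $kx=y+(kx-y)$ and invokes the face property, while the converse clears denominators to place rational cone elements back in $M$ and then applies purity, exactly as the paper does with its integers $k,r,s$. Your only deviations are cosmetic — a single common denominator $D$ instead of separate scalars, and an explicit treatment of the degenerate cases $\lambda=0$, $p=0$, $q=0$ that the paper leaves implicit (via its restriction to $y,z\in\cone(M)\setminus\{0\}$ and pointedness of the cone).
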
 
\begin{proof}
	Assume that $x$ is a pure atom. Take $y,z\in \cone(M)\setminus\{0\}$ with $y+z \in \mathbb{Q}_+ x$. Then there exist positive integers $k$, $r$ and $s$ such that $ky, kz\in M$ and $y+z=\frac{r}s x$. Then $sky+skz =kr x$, and consequently $sky\le_M kr x$. As $x$ is pure, there exist positive integers $n,m$ such that $nsky =mkrx$, and thus $y\in \mathbb{Q_+}x$. A similar argument shows $z\in \mathbb{Q_+}x$.
	
	Now assume that $x$ is an extremal ray of $M$, and assume that $y\le_M kx$ for some $y\in M\setminus\{0\}$ and some positive integer $k$. Then there exists $z\in M$ with $y+z=kx$. In particular, $y+z\in \mathbb{Q_+}x$, and as $\mathbb{Q_+}x$ is a face of $\cone(M)$, we deduce that $y\in \mathbb{Q_+}x$. Thus, there exist positive integers $n$ and $m$ with $y=\frac{m}n x$, and so $ny=mx$.
\end{proof}

In light of Lemmas  \ref{purestrong} and \ref{disjoint-atoms-krull}, strong and pure atoms coincide in full affine semigroups; this combined with Lemmas \ref{lem:strong-face} and \ref{lem:pure-face} yields the following equivalences.

\begin{corollary}
	Let $M$ be a full affine semigroup and let $a$ be an atom of $M$. The following are equivalent:
	\begin{itemize}
		\item $a$ is a strong atom;
		\item $a$ is a pure atom;
		\item $a$ is an extremal ray;
		\item $\mathbb{N}a$ is a face of $M$.
	\end{itemize}
\end{corollary}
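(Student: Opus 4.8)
The plan is to prove the four-way equivalence by chaining together the lemmas already established, using the hypothesis that $M$ is full as the bridge that collapses the distinction between the pure and strong conditions. The cleanest structure is to fix an atom $a$ and verify a cycle of implications, but since the heavy lifting is all contained in prior results, I would instead group the equivalences by which lemma delivers them. First I would record that a full affine semigroup is root-closed (this is observed explicitly in the excerpt: ``full'' implies ``root-closed'') and atomic (being finitely generated). These two facts are the only global properties of $M$ I need.

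The first pair of equivalences I would establish is \emph{strong atom} $\Leftrightarrow$ \emph{pure atom}. By Lemma~\ref{disjoint-atoms-krull}, since $M$ is reduced and root-closed, any two different atoms of $M$ are disjoint. This is precisely the hypothesis of Lemma~\ref{purestrong}, which (together with atomicity) gives that an atom is strong if and only if it is pure. The second equivalence I would invoke is \emph{pure atom} $\Leftrightarrow$ \emph{extremal ray}: this is exactly the content of Lemma~\ref{lem:pure-face}, which holds for any affine semigroup and requires no fullness assumption. Finally, the equivalence \emph{strong atom} $\Leftrightarrow$ $\mathbb{N}a$ \emph{is a face of} $M$ is furnished directly by Lemma~\ref{lem:strong-face}, which again applies to any monoid. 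Assembling these three biconditionals — strong $\Leftrightarrow$ pure, pure $\Leftrightarrow$ extremal ray, and strong $\Leftrightarrow$ face — shows that all four conditions are mutually equivalent, since each is linked to ``strong atom'' by one of the cited lemmas.

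The main point to be careful about is the logical bookkeeping: three of the four conditions (pure, extremal ray, $\mathbb{N}a$ a face) are each shown equivalent to ``strong atom'' individually, so ``strong atom'' serves as the hub through which every pair of conditions is connected, and no separate argument is needed to close the cycle. I do not anticipate a genuine obstacle here, because fullness is exactly strong enough to trigger Lemma~\ref{disjoint-atoms-krull} (via root-closedness), which is the one ingredient that Lemma~\ref{purestrong} needs but that a general affine semigroup lacks — the numerical semigroup $\langle 2,3\rangle$ discussed earlier shows that without disjointness, pure and strong atoms genuinely differ. Thus the only thing worth stating explicitly in the proof is the reduction of fullness to root-closedness plus atomicity; everything else is a direct citation of the preceding lemmas.
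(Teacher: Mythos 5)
Your proposal is correct and matches the paper's argument exactly: the paper likewise derives strong~$\Leftrightarrow$~pure from Lemmas~\ref{purestrong} and \ref{disjoint-atoms-krull} (using that full implies root-closed, hence atoms are pairwise disjoint), and then combines this with Lemmas~\ref{lem:strong-face} and \ref{lem:pure-face} for the remaining two equivalences. Your explicit remarks on atomicity via finite generation and on ``strong atom'' serving as the hub of the equivalences are just slightly more detailed bookkeeping of the same proof.
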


Notice that as a consequence of Lemma \ref{disjoint-atoms-krull}, if $a$ is an extreme ray of a full affine monoid $M$, then the only atom of $M$ in $\mathbb{Q}_+a$ is $a$. 

For full affine semigroups, Ap\'ery sets can be computed with the help of extraction degrees. 

\begin{proposition}\label{prop:apery-lambda}
	Let $M$ be a full affine semigroup and let $A$ be a subset of $M$. Then 
	\[
	\Ap(M,A) = \{ y \in M \mid \lambda_M(x,y)<1 \hbox{ for all } x \in A\}.
	\] 
\end{proposition}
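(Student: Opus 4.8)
The plan is to reduce the set equality to a pointwise equivalence and then exploit that a full affine semigroup is both an extraction monoid (Lemma~\ref{characterization}) and root-closed (since ``full'' implies ``root-closed'', as observed above). First I would unfold the definition of the Apéry set: an element $y\in M$ lies in $A+M$ precisely when $y=x+m$ for some $x\in A$ and $m\in M$, i.e.\ when $x\le_M y$ for some $x\in A$. Hence $y\in\Ap(M,A)$ if and only if $y\in M$ and $x\not\le_M y$ for every $x\in A$. Since the right-hand side of the statement also requires $y\in M$, it suffices to prove, for each fixed $x\in A$ and $y\in M\setminus\{0\}$, the equivalence
\[
x\le_M y\iff \lambda_M(x,y)\ge 1,
\]
and then read off the claimed equality by taking the contrapositive over all $x\in A$.

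The forward implication is immediate: if $x\le_M y$ then $1\cdot x\le_M 1\cdot y$, so $1=1/1$ belongs to the set defining $\lambda_M(x,y)$, whence $\lambda_M(x,y)\ge 1$. The substance lies in the converse, and this is where fullness is essential. By Lemma~\ref{characterization}, $M$ is an extraction monoid, so the supremum defining $\lambda_M(x,y)$ is attained: there are $m,n\in\mathbb N\setminus\{0\}$ with $mx\le_M ny$ and $\lambda_M(x,y)=m/n$. Assuming $\lambda_M(x,y)\ge 1$ gives $m\ge n$, so $(m-n)x\in M$; combining this with $ny-mx\in M$ I would write
\[
n(y-x)=(ny-mx)+(m-n)x\in M.
\]
Since $M$ is full it is root-closed, so from $n(y-x)\in M$ with $n\ge 1$ we conclude $y-x\in M$, that is $x\le_M y$, as required.

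The main obstacle is precisely this converse step, and it genuinely uses fullness: attaining $\lambda_M$ as a rational $m/n$ only deposits the multiple $n(y-x)$ inside $M$, and passing from there to $y-x\in M$ is exactly the root-closed property guaranteed by fullness. That the hypothesis cannot be dropped is illustrated by $\langle 2,3\rangle$, where $\lambda_M(2,3)\ge 1$ (indeed $3\cdot 2\le_M 2\cdot 3$) yet $2\not\le_M 3$; here the monoid is not full. Finally, the boundary case $y=0$ is harmless, since $0\in\Ap(M,A)$ whenever $0\notin A$, in agreement with the value $\lambda_M(x,0)=0<1$, so no generality is lost by concentrating on the pointwise equivalence above.
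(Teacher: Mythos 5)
Your proof is correct, but it takes a genuinely different route from the paper's own proof of Proposition~\ref{prop:apery-lambda}. The paper uses fullness directly and geometrically: it identifies $\le_M$ with the coordinatewise order $\le$ on $\mathbb{N}^n$, rewrites $\lambda_M(x,y)=\sup\{m/n \mid mx\le ny\}$, and from an attained value $m/n\ge 1$ reads off the chain $x\le \frac{m}{n}x\le y$ inside $\mathbb{Q}^n$, with fullness converting $x\le y$ back into $x\le_M y$. You instead use fullness only through its weaker consequence of root-closedness: from the attained extraction grade $m/n\ge 1$ (attainment supplied by Lemma~\ref{characterization}) you write $n(y-x)=(ny-mx)+(m-n)x\in M$ and invoke root-closedness to conclude $y-x\in M$. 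This buys real generality: your argument proves the statement for any root-closed extraction monoid (for instance, Krull monoids), not merely full affine semigroups, and it is in essence the same mechanism the paper deploys only later, in Lemma~\ref{lem:Belong-Apery} and the corollary identifying membership in the Ap\'ery set with the conditions $\lambda_M(a,x)<1$ --- so you have anticipated that more abstract lemma, whereas the paper's proof of the proposition itself stays concrete by exploiting the ambient order on $\mathbb{N}^n$. Two minor points in your favour: your example $\langle 2,3\rangle$, where $\lambda_M(2,3)=3/2\ge 1$ yet $3\in\Ap(M,\{2\})$, correctly isolates why root-closedness (hence fullness) cannot be dropped; and your handling of $y=0$ addresses a boundary case the paper passes over silently, though like the paper you tacitly assume $0\notin A$, since $\lambda_M$ is defined only for nonzero first arguments.
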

\begin{proof}
	Notice that as $M$ is full, $x\le y$ if and only if $x\le_M y$ for every $x,y\in M$. Thus $\lambda_M(x,y)=\sup\{ m/n\mid m x \le n y, n,m\in \mathbb{N}, n\neq 0\}$. Observe that if $\lambda_M(x,y)=m/n \ge 1$, then $m x\le ny $ leads to $x\le \frac{m}n x \le y$, and consequently $x\le y$. Also if $x\le y$, then trivially $\lambda_M(x,y)\ge 1$. This proves that $\lambda_M(x,y)<1$ if and only if $x\not\le_M y$, or equivalently $y\not\in x+M$. Thus $\lambda_M(x,y)<1$ if and only if $y\in M\setminus(x+M)$. With this observation, the proof follows easily.
\end{proof}

With this, we can reformulate Example~\ref{ElliottExample}, and parametrize the set of nonnegative integer solutions of $x+2y\equiv 0 \pmod 7$. The proof of Lemma~\ref{characterization} gives a way to compute $\lambda_M((x_1,y_1),(x_2,y_2))$, for $(x_1,x_2)$ and $(y_1,y_2)$ elements of $M$, with $(x_1,x_2)$ nonzero. Observe that the facets of the cone spanned by $M$ are in the coordinate axes \cite[Section~2]{rgs-full}, so we can take $v_1(x_1,x_2)=x_1$ and $v_2(x_1,x_2)=x_2$. In particular, 
\begin{equation}\label{formula-lambda}
	\lambda_M((x_1,x_2),(y_1,y_2))=\begin{cases}
		\min\left\{\dfrac{y_1}{x_1},\dfrac{y_2}{x_2}\right\}, &\hbox{if  } x_1\neq 0 \neq x_2, \\[10pt]
		\dfrac{y_1}{x_1}, &\hbox{if } x_2=0,\\[10pt]
		\dfrac{y_2}{x_2}, &\hbox{if } x_1=0.
\end{cases}\end{equation}

\begin{example}\label{ex:Elliot-lambda}
	Let  $M$ be the set of non-negative integer solutions of $x+2y\equiv 0 \pmod 7$. We already know that every $n\in M$ will be expressed as $n=h_1+h_2+h_3$, with 
	\begin{enumerate}[(a)]
		\item $h_1\in \langle (7,0),(0,7)\rangle$,
		\item $h_2\in\langle (1,3),(5,1)\rangle$, say $h_2=\alpha (1,3)+\beta(5,1)$,
		\item $h_3\in \langle (3,2)\rangle$, say $h_3=\gamma (3,2)$,
	\end{enumerate}
	and $h_2+h_3\in \Ap(M,H_1)$, and $h_3\in \Ap(M,H_1\cup H_2)$. Let us use Proposition~\ref{prop:apery-lambda}. The restrictions \[\lambda_M((1,3),h_3)<1, \ \lambda_M((5,1),h_3)<1,\] and \eqref{formula-lambda} force $\gamma\in\{0,1\}$; while the restrictions \[\lambda_M((7,0),h_2+h_3)<1,\ \lambda_M((0,7),h_2+h_3)<1\] translate to \[\alpha+5\beta+3\gamma<7,\  3\alpha+\beta+2\gamma<7,\] respectively. This limits $\beta$ to be at most one also, and $\alpha$ to be at most two. 
	
	So every element $n\in M$ is written uniquely as 
	\[
	n=\delta(7,0)+\eta(0,7)+\alpha(1,3)+\beta(5,1)+\gamma(3,2),
	\]
	with $\delta,\eta,\alpha,\beta,\gamma\in \mathbb{N}$, subject to 
	\[\alpha+5\beta+3\gamma<7,\  3\alpha+\beta+2\gamma<7,\ \gamma<2.\]
	Compare with \cite[Case~XXVI]{elliot}.
\end{example}

\subsection{Elementary atoms of block monoids}

We now relate the concept of strong atom to elementary zero-sum sequences, which were introduced in \cite{b-g-g-s}.

Let $(G,+)$ be a group and let $G_0\subseteq G$. For a sequence $x=(x_g)_{g\in G_0}$, we define its support as $\supp(x)= \{ g\in G_0 \mid x_g\neq 0\}$. We say that $x$ is a \emph{zero-sum sequence} if its \emph{support} is finite and $\sum_{g\in G_0} x_g g=0$ (in $G$). If we have two zero-sum sequences $(x_g)_{g\in G_0}$ and $(y_g)_{g\in G_0}$, then $(x_g+y_g)_{g\in G_0}$ is again a zero sequence. So if we set $\mathcal{B}(G_0)$ to be the set of zero sum sequences over $G_0$, then $\mathcal{B}(G_0)$ is a monoid, known as the \emph{block monoid} over $G_0$. Block monoids are atomic Krull monoids \cite[Proposition~2.5.6]{g-hk}. 

Given two zero-sum sequences $(x_g)_{g\in G_0}$ and $(y_g)_{g\in G_0}$, we write $(x_g)_{g\in G_0}\le (y_g)_{g\in G_0}$ if $x_g\le y_g$ for all $g\in G_0$. Notice that if $(x_g)_{g\in G_0}\le (y_g)_{g\in G_0}$, then $(x_g-y_g)_{g\in G_0}$ is again a zero-sum sequence. So in some sense, block monoids can be seen as a generalization of full affine semigroups. Let $M\subseteq \mathbb{N}^n$ be a full affine semigroup. Then $\operatorname{G}(M)$ is a subgroup of $\mathbb{Z}^n$. Let 
\[
\begin{array}{lcl}
  a_{11}x_1+\cdots+a_{1n}x_n & \equiv & 0 \pmod {d_1},\\
  & \vdots & \\
  a_{r1}x_1+\cdots+a_{rn}x_n & \equiv & 0 \pmod {d_r},\\
  a_{(r+1)1}x_1+\cdots+a_{(r+1)n}x_n & = & 0,\\
  & \vdots & \\
  a_{(r+k)1}x_1+\cdots+a_{(r+k)n}x_n & = & 0,\\
\end{array}
\]
be a set of defining equations of $\operatorname{G}(M)$. Let $G_0\subseteq \mathbb{Z}_{d_1}\times \dots \times \mathbb{Z}_{d_r}\times \mathbb{Z}^k$ be the set of columns of the above system of equations. It follows easily that $M$ is precisely $\mathcal{B}(G_0)$. 

A zero-sum sequence $x$ is \emph{elementary} if its support is not empty and minimal, that is, there is no other zero-sum sequence $y$ with $\emptyset\neq \supp(y)\subsetneq \supp(x)$. 

\begin{proposition}
Let $M$ be a block monoid and let $a$ be an atom of $M$. Then $a$ is elementary if and only if $a$ is strong.
\end{proposition}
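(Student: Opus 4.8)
The plan is to prove both directions by translating between the combinatorial language of supports of zero-sum sequences and the order-theoretic notion of strong atoms via Lemma~\ref{lem:strong-face}, which says an atom $a$ is strong if and only if $\mathbb{N}a$ is a face of $M$. Throughout I would work with the partial order $\le$ on sequences given by coordinatewise comparison, noting (as observed just before the statement) that if $x\le y$ for zero-sum sequences then $y-x$ is again a zero-sum sequence, so that $\le$ coincides with $\le_M$ on $M=\mathcal{B}(G_0)$. The key observation linking the two concepts is that for any zero-sum sequence $y$ and any $k$, one has $\supp(y)\subseteq \supp(ka)=\supp(a)$ exactly when $y$ is ``built from the same letters as $a$''. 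I would make precise that $y\le_M ka$ holds for some $k$ if and only if $\supp(y)\subseteq\supp(a)$: if $y\le_M ka$ then coordinatewise $y_g\le k a_g$, forcing $\supp(y)\subseteq\supp(a)$; conversely, if $\supp(y)\subseteq\supp(a)$, then since $a_g>0$ on $\supp(a)$, choosing $k$ large enough makes $k a_g\ge y_g$ for every $g$, giving $y\le ka$.

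For the direction that elementary implies strong, I would use Lemma~\ref{lem:strong-face} and show $\mathbb{N}a$ is a face. Suppose $a$ is elementary and $x+y=na$ with $x,y\in M$. Since $x,y\ge 0$ and $x+y=na$, coordinatewise we get $\supp(x),\supp(y)\subseteq\supp(na)=\supp(a)$. Now $x$ is a zero-sum sequence with $\supp(x)\subseteq\supp(a)$; by minimality of $\supp(a)$ (elementarity), either $\supp(x)=\emptyset$, i.e.\ $x=0\in\mathbb{N}a$, or $\supp(x)=\supp(a)$. I would then argue that having full support $\supp(a)$ together with the face-like structure forces $x$ to be a nonnegative multiple of $a$: on the minimal support $\supp(a)$ the zero-sum condition pins down the ratios between coordinates (this is the content of elementarity—the solution space of the zero-sum equation restricted to a minimal support is one-dimensional over $\mathbb{Q}$), so any zero-sum sequence supported exactly on $\supp(a)$ is a rational multiple of $a$, and being in $\mathbb{N}^n$ makes it an integer multiple. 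Hence $x\in\mathbb{N}a$, and symmetrically $y\in\mathbb{N}a$, so $\mathbb{N}a$ is a face and $a$ is strong.

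For the converse, that strong implies elementary, I would argue the contrapositive: if $a$ is not elementary, then there is a nonempty zero-sum sequence $y$ with $\emptyset\neq\supp(y)\subsetneq\supp(a)$. Choosing $y$ to be an atom (minimal nonzero zero-sum sequence) below $a$ in support, I can by scaling and the preceding support observation arrange $y\le_M ka$ for suitable $k$, while $y$ is not a multiple of $a$ because its support is strictly smaller than that of $a$. This exhibits an element $y\le_M ka$ with $y\notin\mathbb{N}a$, contradicting the definition of strong atom. I would need to be careful that $a$ being an \emph{atom} already guarantees $\supp(a)$ is nonempty and that no proper zero-sum subsequence $y$ can equal a multiple of $a$; the latter is immediate from the strict support inclusion $\supp(y)\subsetneq\supp(a)$.

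The main obstacle I anticipate is the step in the forward direction establishing that a zero-sum sequence supported exactly on the minimal support $\supp(a)$ must be a nonnegative integer multiple of $a$. This is where elementarity does its real work, and it requires justifying that the $\mathbb{Q}$-vector space of solutions of the zero-sum equation $\sum_{g}x_g\,g=0$ restricted to the fixed minimal support is one-dimensional. I would justify this by a minimality argument: if the solution space had dimension $\ge 2$, one could take a rational combination of two independent solutions to produce a nonzero integer zero-sum sequence with strictly smaller support, contradicting minimality of $\supp(a)$. Making this dimension-counting argument fully rigorous over the mixed group $\mathbb{Z}_{d_1}\times\cdots\times\mathbb{Z}_{d_r}\times\mathbb{Z}^k$ (rather than a plain vector space) is the delicate point, and I expect it to be the crux of the proof, with everything else reducing to the coordinatewise support bookkeeping enabled by Lemma~\ref{lem:strong-face} and the ambient fullness of $\mathcal{B}(G_0)$.
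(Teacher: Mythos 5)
Your argument is correct, but the direction ``elementary implies strong'' takes a genuinely different route from the paper's. The paper invokes Corollary~\ref{ForKrull}: since a block monoid is Krull, some multiple $na$ lies in a factorial submonoid generated by strong atoms, $na=\sum_{q\in Q}\lambda_q q$; each such $q$ satisfies $\supp(q)\subseteq\supp(a)$, elementarity forces $\supp(q)=\supp(a)$, and then strongness of $q$ together with $a$ being an atom yields $a=q$, so $a$ itself is strong. You instead bypass the extraction machinery behind Corollary~\ref{ForKrull} (i.e.\ Theorem~\ref{generatedbypure}) entirely, proving via Lemma~\ref{lem:strong-face} that $\mathbb{N}a$ is a face, which reduces everything to the claim that a zero-sum sequence supported exactly on the minimal support $\supp(a)$ is an integer multiple of $a$. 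The paper's route is shorter but leans on the Krull/extraction theory developed earlier; yours is self-contained and exposes exactly where elementarity does its work. Your converse direction is essentially the paper's argument in contrapositive form, and both rely on the same observation that coordinatewise $\le$ agrees with $\le_M$ in $\mathcal{B}(G_0)$.

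Two points you flag or gloss do need settling, and both go through. First, the torsion delicacy you anticipate as ``the crux'' in fact dissolves: you never need rational solutions of the defining congruences, only \emph{integer} combinations of integer zero-sum solutions, which remain zero-sum in any abelian group. Concretely, if $x$ and $a$ are $\mathbb{Q}$-linearly independent nonnegative solutions with common full support $S$, set $\lambda=p/q=\min_{g\in S}x_g/a_g$, attained at some $g_0$; then $z=qx-pa$ is a nonzero nonnegative integer sequence with $\sum_{g}z_g\,g=q\cdot 0-p\cdot 0=0$ and $\emptyset\neq\supp(z)\subseteq S\setminus\{g_0\}\subsetneq S$, contradicting minimality of $\supp(a)$. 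Second, your step ``a rational multiple of $a$ that is a nonnegative integer sequence is an integer multiple'' is not immediate and needs the atom hypothesis: from $qx=pa$ with $\gcd(p,q)=1$, choose $u,v\in\mathbb{Z}$ with $up+vq=1$; then $b=ux+va$ is an integer sequence with $qb=a$, hence nonnegative, and $\sum_g b_g\,g=0$, so $b$ is a zero-sum sequence with $a=qb$; since $a$ is an atom, $q=1$. With these two observations supplied, your proof is complete.
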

\begin{proof}
Suppose that $a$ is a strong atom. Let $b$ be with nonempty support be such that $\supp(b)\subseteq \supp (a)$. Take $k$ a positive integer such that $b\le k a$, and so $b\le_M ka$. As $a$ is a strong atom, it follows that there exists a nonnegative integer $l$ such that $b=l a$, and thus $\supp (b) = \supp (a)$. 

Now, suppose that $a$ is an elementary atom, and let us prove that $a$ is strong. By Corollary~\ref{ForKrull}, there exists a positive integer $n$ such that $na = \sum_{q\in Q} \lambda_q q$ with $Q$ a set of strong atoms of $M$ and $\lambda_q$ nonnegative integers for all $q\in Q$. This in particular implies that $\supp (q)\subseteq \supp (a)$ for all $q\in Q$, and as $a$ is elementary, we have that $\supp (q)=\supp (a)$. Take $q\in Q$. We can find a positive integer $k$ such that $a\le k q$, and consequently $a\le_M k q$. As $q$ is a strong atom, it follows that there exists a nonnegative integer $l$ such that $a=l q$. Since $a$ is already an atom, $l$ must be one, and $a$ is a strong atom.
\end{proof}

\section{Stratification}

We say that an affine semigroup $M\subseteq \mathbb{N}^d$ is \emph{simplicial} if the cone $\cone(M)$ is spanned by $d$ $\mathbb{Q}$-linearly independent atoms of $M$, say $a_1,\ldots,a_d$. Observe that Theorem~\ref{propertiesofgenerated}~(ii) applies to each simplicial affine semigroup $M$ having a single atom in each one dimensional face of the cone spanned by $M$. By its definition, $\{a_1,\ldots,a_d\}$ generates a factorial submonoid of $M$ that contains for every $x\in M\setminus\{0\}$ some multiple $mx$, $m\in \mathbb N\setminus\{0\}$. Thus, simplicial affine semigroups are particular examples of inside factorial monoids. Whereas the latter are root-extensions of a factorial monoid, the former may be seen as $\mathbb Q_+$-extensions of a simplex. 

In order to find a generalization of Theorem~\ref{th:stratification-full-N2} for inside factorial monoids, we first need to find an analogue for \cite[Lemma~1.4]{c-m} and \cite[Theorem~1.5]{c-m}.

Let us start with \cite[Lemma~1.4]{c-m}. Suppose that we have a certain set of atoms $Q$ of a monoid $M$. If $x\in M$, and $x\not\in \Ap(M,Q)$, then $x=q_1+x_1$, with $q_1\in Q$ and $x_1\in M$. Now we start anew with $x_1\in M$, and if $x_1\not\in \Ap(M,Q)$, then there exists $q_2\in Q$ and $x_2\in M$ such that $x_1=q_2+x_2$, and so $x=q_1+q_2+x_2$. In this way we obtain a sequence $q_1+\dots+q_n$ of elements in $\langle Q\rangle$, such that $q_1+\dots+q_n\le_M x$. This motivates the following definition: we say that a non-empty subset $Q$ of a monoid $M$ is \emph{linked} to $M$ if for each $x\in M\setminus\{0\}$ there exists $c(x)\in \mathbb N$ such that whenever $\sum_{q\in Q}\lambda_qq\leq_M x$, for some $\lambda_q\in\mathbb N$, it follows that $\sum_{q\in Q}\lambda_q\leq c(x)$.

\begin{proposition}\label{prop:decomposition-apery}
	Let $M$ be a monoid, and let $Q$ be a nonempty set of nonzero elements of $M$. If $Q$ es linked to $M$, then 
	\[
	M= \Ap(M,Q) +\langle Q \rangle.
	\]
\end{proposition}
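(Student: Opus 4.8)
The plan is to prove the nontrivial inclusion $M \subseteq \Ap(M,Q) + \langle Q\rangle$ by the iterative ``peeling off'' procedure already sketched in the paragraph preceding the statement; the reverse inclusion $\Ap(M,Q)+\langle Q\rangle \subseteq M$ is immediate, since both summands lie in $M$ and $M$ is closed under addition.

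First I would dispose of the element $0$. Since $Q$ consists of nonzero elements and $M$ is reduced, no equation $q+m=0$ with $q\in Q$ and $m\in M$ can hold (it would force $q$ to be a unit, hence $q=0$). Thus $0\notin Q+M$, so $0\in\Ap(M,Q)$, and $0=0+0$ exhibits $0$ in the desired form. It then remains to treat $x\in M\setminus\{0\}$.

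For such an $x$, I would run the extraction process: set $x_0=x$, and as long as $x_n\notin\Ap(M,Q)$ we have $x_n\in Q+M$, so there exist $q_{n+1}\in Q$ and $x_{n+1}\in M$ with $x_n=q_{n+1}+x_{n+1}$. After $n$ successful steps this yields $x=q_1+\dots+q_n+x_n$, whence $q_1+\dots+q_n\le_M x$. The point where the linked hypothesis enters is the termination of this process. Writing $q_1+\dots+q_n=\sum_{q\in Q}\lambda_q q$ with $\sum_{q\in Q}\lambda_q=n$, the relation $\sum_{q\in Q}\lambda_q q\le_M x$ together with linkedness gives $n=\sum_{q\in Q}\lambda_q\le c(x)$. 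Hence the number of steps is bounded by $c(x)$; were the iteration to continue indefinitely we would obtain arbitrarily large $n$ with $n\le c(x)$, a contradiction. Therefore at some step $n\le c(x)$ we must have $x_n\in\Ap(M,Q)$, and then $x=(q_1+\dots+q_n)+x_n$ with $q_1+\dots+q_n\in\langle Q\rangle$ and $x_n\in\Ap(M,Q)$, as required.

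The only genuine obstacle is the termination of the peeling process, and this is precisely the content of the linked condition: without an a priori bound on the total multiplicity of elements of $Q$ lying $\le_M$-below $x$, the iteration could in principle never reach the Apéry set. The bound $c(x)$ converts the finiteness built into linkedness into a guarantee that the procedure halts at an element of $\Ap(M,Q)$. Everything else—the factoring $x_n=q_{n+1}+x_{n+1}$ at each nonterminal step and the bookkeeping $\sum_{q\in Q}\lambda_q=n$—is routine.
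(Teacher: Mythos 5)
Your proof is correct and is essentially the paper's argument: the paper chooses $h=\sum_{q\in Q}\lambda_q q\in\langle Q\rangle$ with $h\le_M x$ maximizing $\sum_{q\in Q}\lambda_q$ (existence guaranteed by the bound $c(x)$ from linkedness) and notes that the remainder $z$ in $x=z+h$ must lie in $\Ap(M,Q)$ by maximality, while you run the equivalent greedy peeling and use the same bound $c(x)$ to force termination --- the identical finiteness input, packaged as an extremal choice rather than as a halting argument. Your separate treatment of $x=0$ (which the linked condition, quantified over $x\in M\setminus\{0\}$, does not cover) is a small point of care that the paper handles implicitly through the initial case $x\in\Ap(M,Q)$, since $0\in\Ap(M,Q)$ in a reduced monoid.
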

\begin{proof}
	Take $x\in M$. If $x\in\Ap(M,Q)$, then we can write $x=0+x$ and we have the desired decomposition. Otherwise, if $x\notin \Ap(M,Q)=M\setminus (M+Q)$, we can write $x=m+q$, with $m\in M$ and $q\in Q$, whence $0\neq q\leq _M x$. We consider $N=\{\sum\lambda_qq\in \langle Q\rangle \mid \sum\lambda_qq\leq_M x\}$ and take $h=\sum\lambda_qq\in N$ such that $\sum\lambda_q$ is maximal (this expression exists by the definition of linked). This means that $x=z+h$ for some $z\in M$, and, it is clear that, by maximality, $z\in\Ap(M,Q)$.
\end{proof}

As we see next, bases of inside factorial monoids are always linked to the monoid.

\begin{corollary}\label{cor:decoposition-apery-inside}
	Let $M$ be an inside factorial monoid with base $Q$. Then $Q$ is linked to $M$. In particular, $M= \Ap(M,Q) +\langle Q \rangle$.
\end{corollary}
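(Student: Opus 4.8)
The statement has two parts; the second (``in particular'') is immediate from Proposition~\ref{prop:decomposition-apery} once the first is established, so the plan is to show that the base $Q=\mathcal{A}(S)$ of the factorial submonoid $S\subseteq M$ is linked to $M$. The tool I would introduce is a family of $\mathbb{Q}$-valued valuations extending the unique factorization on $S$. For each $q\in Q$, factoriality of $S$ provides a valuation $v_q\colon S\to\mathbb{N}$, namely the exponent of $q$ in the unique factorization; it is additive and satisfies $v_q(q')=\delta_{q,q'}$ for $q'\in Q$ together with $v_q(s)\ge 0$ for all $s\in S$.

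First I would extend each $v_q$ to all of $M$. Given $x\in M\setminus\{0\}$, choose $m\in\mathbb{N}\setminus\{0\}$ with $mx\in S$ and set $v_q(x)=\tfrac{1}{m}v_q(mx)\in\mathbb{Q}_{\ge 0}$. The main routine check is well-definedness: if $mx,m'x\in S$ then $mm'x\in S$, and additivity of $v_q$ on $S$ gives $m'v_q(mx)=v_q(mm'x)=mv_q(m'x)$, so the value does not depend on $m$. The same computation shows that $v_q$ is additive on $M$ (choosing $m$ with $mx,my\in S$ simultaneously), that $v_q(x)\ge 0$ for every $x\in M$, and, since $mx$ has a finite factorization in $S$, that $v_q(x)=0$ for all but finitely many $q\in Q$.

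With these valuations in hand the linking bound is short. Suppose $\sum_{q\in Q}\lambda_q q\le_M x$ with $\lambda_q\in\mathbb{N}$; then $x-\sum_q\lambda_q q\in M$, so applying $v_{q'}$ and using additivity together with $v_{q'}(q)=\delta_{q,q'}$ and nonnegativity of $v_{q'}$ on $M$ yields $v_{q'}(x)=\lambda_{q'}+v_{q'}\!\bigl(x-\sum_q\lambda_q q\bigr)\ge\lambda_{q'}$ for each $q'\in Q$. Summing over $q'$ gives $\sum_q\lambda_q\le\sum_q v_q(x)$, a finite quantity depending only on $x$. Taking $c(x)$ to be any natural number at least $\sum_q v_q(x)$ shows that $Q$ is linked to $M$, and Proposition~\ref{prop:decomposition-apery} then delivers $M=\Ap(M,Q)+\langle Q\rangle$.

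I would expect the only real obstacle to be the bookkeeping around the extended valuation---checking that $v_q$ is well defined and additive on $M$, that its values stay nonnegative, and that each $x$ has finite $q$-support---rather than any conceptual difficulty. Once the valuations are available, the crucial inequality $\lambda_{q'}\le v_{q'}(x)$ falls out immediately, and note that this argument uses only the inside factorial structure, never root-closedness.
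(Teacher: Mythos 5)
Your proof is correct and is essentially the paper's own argument in valuation language: your extended valuations $v_q(x)=\frac{1}{m}v_q(mx)$ are exactly the coordinates $x_q/n_x$ the paper works with (and later formalizes as coordinates, equal to the extraction grades $\lambda_M(q,x)$), and your well-definedness and additivity checks reproduce its key step of clearing denominators to land in $\langle Q\rangle$ and comparing exponents there via unique factorization, yielding the same inequality $\lambda_q\le x_q/n_x$. The paper simply takes $c(x)=\sum_{q\in Q}\left\lfloor x_q/n_x\right\rfloor$ directly rather than packaging the exponent maps as $\mathbb{Q}_{\ge 0}$-valued valuations, so the two arguments coincide in substance.
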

\begin{proof}
	As $M$ is inside factorial with base $Q$, for every $x\in M$, there exist non negative integers $n_x$ and $x_q$ such that $n_xx=\sum_{q\in Q} x_qq$ (with $x_q$ nonnegative integers and all except finitely many of them equal to zero). We assert that $c_x$ can be chosen as $\sum_{q\in Q}\left \lfloor \frac{x_q}{n_x} \right\rfloor$. Suppose that $\lambda_q$ are nonnegative integers such that $\sum_{q\in Q} \lambda_qq$ is a finite sum and $\sum_{q\in Q} \lambda_qq \leq_M x$. 
	Then $m=x-\sum_{q\in Q} \lambda_q q\in M$, and consequently $n_m(x-\sum_{q\in Q}\lambda_qq)\in\langle Q\rangle$, for some positive integer $n_m$. Thus,  $n_xn_m(x-\sum_{q\in Q}\lambda_qq)\in\langle Q \rangle$. Therefore, $n_m\sum_{q\in Q}x_qq-\sum_{q\in Q}n_xn_m\lambda_qq\in\langle Q\rangle$. Since $\langle Q\rangle$ is factorial, we have $n_mn_x\lambda_q\leq n_mx_q$ for all $q\in Q$. This yields $\lambda_q\leq\frac{x_q}{n_x}$ and, hence, $\sum_{q\in Q}\lambda_q\leq \sum_{q\in Q}\left \lfloor \frac{x_q}{n_x}\right\rfloor$.
	By Proposition~\ref{prop:decomposition-apery}, we deduce that $M= \Ap(M,Q) +\langle Q \rangle$.
\end{proof}

Now, let us focus on \cite[Theorem~1.5]{c-m}. Given a monoid $M$ and two subsets $X$ and $Y$ of $M$, we denote as usual $X+Y=\{x+y \mid x\in X, y\in Y\}$. We write $X\oplus Y$ whenever for every $m\in X+Y$ there exist unique $x\in X$ and $y\in Y$ such that $m=x+y$.

Given $x,y\in M$, we say that $x$ is \emph{coprime} with $y$ if $x\not\le_M y$ and whenever $x \le_M m+y$ for some $m\in M$, we have $x\le_M m$. 

\begin{theorem}\label{th:decomposition-apery-coprime}
	Let $M$ be a monoid, and let $Q$ be a set of nonzero elements of $M$ such that $\langle Q\rangle$ is factorial and $Q$ is linked to $M$. Then, the following conditions are equivalent.
	\begin{itemize}
		\item[(1)] $M=\Ap(M,Q)\oplus \langle Q\rangle$.
		\item[(2)] For every $q,q'\in Q$, with $q\neq q'$, we have that $q$ is coprime with $q'$.
		\item[(3)] For every $w,w'\in \Ap(M,Q)$, if $w-w'\in \operatorname{G}(Q)$, then $w=w'$.
	\end{itemize}
\end{theorem}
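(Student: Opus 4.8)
The plan is to establish the two equivalences $(1)\Leftrightarrow(3)$ and $(1)\Leftrightarrow(2)$, both resting on two standing facts. First, since $Q$ is linked to $M$, Proposition~\ref{prop:decomposition-apery} already yields $M=\Ap(M,Q)+\langle Q\rangle$, so in each implication only the \emph{uniqueness} of such a decomposition is at stake. Second, since $\langle Q\rangle$ is factorial, every element of $\langle Q\rangle$ has a unique expression $\sum_{q\in Q}\lambda_q q$ with $\lambda_q\in\mathbb N$, and $\operatorname{G}(Q)=\{a-a'\mid a,a'\in\langle Q\rangle\}$. I will also use the characterization $\Ap(M,Q)=\{w\in M\mid q\not\le_M w\text{ for all }q\in Q\}$, which is immediate from $\Ap(M,Q)=M\setminus(Q+M)$.

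The equivalence $(1)\Leftrightarrow(3)$ is the quick part. Given $w+a=w'+a'$ with $w,w'\in\Ap(M,Q)$ and $a,a'\in\langle Q\rangle$, passing to $\operatorname{G}(M)$ rewrites this as $w-w'=a'-a\in\operatorname{G}(Q)$; conversely, any relation $w-w'\in\operatorname{G}(Q)$ can be written $w-w'=a'-a$ with $a,a'\in\langle Q\rangle$ and hence moved back to $w+a=w'+a'$ in $M$. Thus $(3)$ says precisely that equal decompositions force $w=w'$ (and then $a=a'$ by cancellation), which is exactly $(1)$.

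For $(1)\Rightarrow(2)$ I take distinct $q,q'\in Q$ and decompose. For $q\not\le_M q'$: if $q'=q+m$, then writing $m=w+a$ via Proposition~\ref{prop:decomposition-apery} gives two decompositions $q'=0+q'=w+(q+a)$, so $(1)$ forces $q'=q+a$ in $\langle Q\rangle$, impossible for distinct basis elements on degree grounds. For the second coprimality clause, if $q\le_M m+q'$, write $m+q'=q+m'$ and decompose $m=w_m+a_m$ and $m'=w'+a'$; comparing the unique $\oplus$-decompositions $(w_m,a_m+q')$ and $(w',q+a')$ of $m+q'$ yields $a_m+q'=q+a'$ in the free monoid $\langle Q\rangle$, whose $q$-coefficient forces the $q$-coefficient of $a_m$ to be positive, so $q\le_M a_m\le_M m$.

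The substantial direction is $(2)\Rightarrow(1)$, and here the key technical step is a \emph{coprimality extension lemma}: assuming $(2)$, if $q_0\in Q$ and $b\in\langle Q\rangle$ has zero $q_0$-coefficient, then $q_0\le_M x+b$ implies $q_0\le_M x$; this is proved by induction on the total degree of $b$, peeling off one generator $q\neq q_0$ at a time and applying pairwise coprimality. Granting the lemma, to prove uniqueness I suppose $w+a=w'+a'$ and split off the common part $c=\sum_{q\in Q}\min(\lambda_q,\lambda'_q)\,q$, so that after cancelling $c$ the remainders $\tilde a,\tilde a'$ have disjoint support and $w+\tilde a=w'+\tilde a'$. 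If $q_0$ lies in the support of $\tilde a$, then $q_0\le_M w'+\tilde a'$ with $\tilde a'$ having zero $q_0$-coefficient, so the lemma gives $q_0\le_M w'$, contradicting $w'\in\Ap(M,Q)$; hence $\tilde a=0$, and symmetrically $\tilde a'=0$, forcing $a=a'$ and $w=w'$. I expect this direction—specifically getting the extension lemma and the disjoint-support reduction right—to be the main obstacle, the remaining implications being essentially bookkeeping in the free monoid $\langle Q\rangle$ and its quotient group.
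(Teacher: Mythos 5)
Your proof is correct and takes essentially the same route as the paper's: the paper runs the cycle $(1)\Rightarrow(2)\Rightarrow(3)\Rightarrow(1)$ using exactly your ingredients---Proposition~\ref{prop:decomposition-apery} for existence, coefficient comparison in the factorial monoid $\langle Q\rangle$, and the coprimality-peeling step, which the paper compresses into ``this clearly leads to $q\le_M w'$'' and you rightly isolate as an induction lemma. The only differences are organizational: you prove $(2)\Rightarrow(1)$ directly via the common-part cancellation instead of passing through $(3)$, and you verify the clause $q\not\le_M q'$ explicitly, which the paper leaves implicit.
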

\begin{proof}
	By Proposition~\ref{prop:decomposition-apery}, we know that $M=\Ap(M,Q)+\langle Q\rangle$.

	\emph{(1) implies (2)}. Suppose that $q'\leq_M x+q$, with $q,q'\in Q$, $q\neq q'$ and $x\in M$. Then there exists $x'\in M$ such that $x'+q'=x+q$. We can write $x=z+h$ and $x'=z'+h'$, for some $z,z'\in \Ap(M,Q)$ and $h,h'\in\langle Q\rangle$. So, $z'+(h'+q')=z+(h+q)$, and by the uniqueness hypothesis, we derive $z=z'$ and $h'+q'=h+q$. As $q\neq q'$ and $\langle Q\rangle$ is factorial, $q'\leq_M h\leq_M x$.
	
	\emph{(2) implies (3)}. Let $w,w'\in \Ap(M,Q)$ with $w-w'\in \operatorname{G}(Q)$. Then $w-w'=h-h'$ for some $h,h'\in\langle Q\rangle$. It follows that $w+h=w'+h'$. As $h, h'\in \langle Q\rangle$, there exists nonnegative integers $\lambda_q$ and $\lambda_q'$ such that $h=\sum_{q\in Q}\lambda_q q$ and $h'=\sum_{q\in Q} \lambda_q' q$. If  $\lambda_q\neq \lambda_q'$ for some $q\in Q$, we may suppose without loss of generality that $\lambda_q>\lambda_q'$. Then $q\le_M  w'+h'-\lambda_q'q$. As $q$ is coprime with all $q'\in Q\setminus\{q\}$, this clearly leads to $q\le_M w'$, contradicting the fact that $w'\in \Ap(M,Q)$. This means that $h=h'$, and as a consequence, $w=w'$. 
	
	\emph{(3) implies (1)}. Let $x\in M$ be such that $x=w+h=w'+h'$ with $w,w'\in \Ap(M,Q)$ and $h,h'\in \langle Q\rangle$. Then $w-w'\in \operatorname{G}(Q)$, and by hypothesis $w=w'$, which leads to $h=h'$. 
\end{proof}

With this we can recover \cite[Theorem~3]{GSKL}.

\begin{corollary}
	Let $M$ be a root-closed inside factorial with base $Q$. Then $M= \Ap(M,Q) \oplus \langle Q \rangle$, that is, for every $x\in M$, one has $x=\sum_{q\in Q}\lambda_qq+z$, where $\lambda_q\in\mathbb N$ and $z\in\Ap(M,Q)$ are uniquely determined.
\end{corollary}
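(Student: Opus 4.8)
The plan is to show that a root-closed inside factorial monoid $M$ with base $Q$ satisfies the three hypotheses of Theorem~\ref{th:decomposition-apery-coprime}, and then verify condition (3) directly. First I would record that $\langle Q\rangle$ is factorial (this is part of the definition of inside factorial: $Q$ is the set of atoms $\mathcal A(S)$ of a factorial submonoid $S\subseteq M$, so $\langle Q\rangle=S$ is factorial) and that $Q$ is linked to $M$ (this is exactly Corollary~\ref{cor:decoposition-apery-inside}). With these two facts in hand, Theorem~\ref{th:decomposition-apery-coprime} reduces the whole statement to establishing any one of the equivalent conditions (1), (2), (3). The cleanest route is to prove condition (3), since it interacts most directly with root-closedness.

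To verify (3), I would take $w,w'\in\Ap(M,Q)$ with $w-w'\in\operatorname{G}(Q)$ and aim to conclude $w=w'$. Since $w-w'\in\operatorname{G}(Q)$, write $w-w'=h-h'$ with $h,h'\in\langle Q\rangle$, so $w+h'=w'+h$ in $M$. The goal is to leverage root-closedness to pull an order relation between $w$ and $w'$ out of the difference lying in the group generated by $Q$. The key observation is that because $M$ is inside factorial, some positive multiple of every element lands in $\langle Q\rangle$; in particular there is a positive integer $n$ with $n(w-w')=n h-n h'\in\langle Q\rangle$ expressible with integer (possibly negative) exponents on the atoms of $Q$. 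Root-closedness should then let me descend from a multiple to the element itself: if I can show $n w\le_M n w'$ (or the reverse) for some positive $n$, then root-closedness gives $w\le_M w'$, and by symmetry $w'\le_M w$, forcing $w=w'$ in the reduced monoid.

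The main obstacle, and the step requiring care, is turning the group-theoretic relation $w-w'\in\operatorname{G}(Q)$ into a genuine order inequality of the form $n w\le_M n w'$ inside $M$. The difference $h-h'$ need not itself be nonnegative in the factorial monoid $\langle Q\rangle$, so I cannot immediately say $w'\le_M w$; I must exploit that $w$ and $w'$ are both Ap\'ery elements. The cleanest argument is likely by contradiction: if $w\neq w'$, then comparing the two factorizations $w+h'=w'+h$ in the factorial monoid $\langle Q\rangle$-structure forces some atom $q\in Q$ to divide (in the $\le_M$ sense) either $w$ or $w'$ after cancellation, which contradicts membership in the Ap\'ery set $\Ap(M,Q)$. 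Here root-closedness is what guarantees that the factorial cancellation behaves well with respect to $\le_M$, i.e.\ that $q\le_M$ a multiple forces $q\le_M$ the element. Once condition (3) is secured, Theorem~\ref{th:decomposition-apery-coprime} delivers $M=\Ap(M,Q)\oplus\langle Q\rangle$, and the direct-sum statement unwinds into the uniqueness of $\lambda_q$ and $z$ as asserted.
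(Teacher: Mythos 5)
Your proposal is correct and can be completed along the lines you sketch, but it takes a genuinely different route from the paper's proof. The paper verifies condition (2) of Theorem~\ref{th:decomposition-apery-coprime} (pairwise coprimality of the base elements): from $q_1\le_M x+q_2$ with $q_1\neq q_2$ it writes $x+q_2=x'+q_1$, scales $nx=\sum_{q\in Q}\lambda_q q$ and $nx'=\sum_{q\in Q}\lambda_q' q$ using inside factoriality, compares exponents of $q_1$ in the factorial monoid $\langle Q\rangle$ to obtain $\lambda_{q_1}\ge n$, so that $n(x-q_1)\in\langle Q\rangle\subseteq M$, and then invokes root-closedness once to conclude $x-q_1\in M$, i.e.\ $q_1\le_M x$. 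You instead verify condition (3), and your sketch completes in exactly the parallel way: from $w+h'=w'+h$, pick $n$ with $nw=\sum_{q\in Q}\mu_q q$ and $nw'=\sum_{q\in Q}\mu_q' q$; unique factorization in $\langle Q\rangle$ gives $\mu_q+n\lambda_q'=\mu_q'+n\lambda_q$ for all $q$ (where $h=\sum_{q\in Q}\lambda_q q$ and $h'=\sum_{q\in Q}\lambda_q' q$), so if $\lambda_q\neq\lambda_q'$ for some $q$ then $\mu_q\ge n$ or $\mu_q'\ge n$, whence $n(w-q)\in M$ or $n(w'-q)\in M$, and root-closedness contradicts $w,w'\in\Ap(M,Q)$; hence $h=h'$ and $w=w'$ by cancellativity. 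One phrase in your writeup needs repair: root-closedness does \emph{not} say that $q\le_M nw$ forces $q\le_M w$; what it gives is that $nq\le_M nw$ forces $q\le_M w$, and it is precisely the exponent inequality $\mu_q\ge n$ that supplies the needed $n$ copies of $q$ (your intermediate idea of producing $nw\le_M nw'$ directly is a dead end, which you rightly abandon for the contradiction argument). In effect your condition-(3) route re-proves, in this special case, the paper's later Lemmas~\ref{lem:ap-DM} and~\ref{lem:unique-diamond-inside} (the $D_M(Q)$ coordinate machinery behind Theorem~\ref{th:For-Stratification}), and it buys the slightly stronger fact that distinct Ap\'ery elements are never congruent modulo $\operatorname{G}(Q)$; the paper's condition-(2) proof is marginally shorter and localizes root-closedness in a single descent step. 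Your preliminary reductions ($\langle Q\rangle$ factorial by the definition of the base, linkedness from Corollary~\ref{cor:decoposition-apery-inside}) coincide with the paper's.
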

\begin{proof}
		Let $q_1\leq_M x+q_2$ with $q_1,q_2\in Q$, $q_1\neq q_2$. Hence, $x+q_2=x'+q_1$ with $x'\in M$. As $M$ is inside factorial with base $Q$, we can find a nonnegative integer $n$, and nonnegative integers $\lambda_q$ and $\lambda_q'$ for all $q\in Q$, such that  $nx=\sum_{q\in Q} \lambda_q q$ and $nx'=\sum_{q\in Q} \lambda_q'q$. Hence  $\sum_{q\in Q} \lambda_q q+nq_2=\sum_{q\in Q} \lambda_q' q+nq_1$. Since $\langle Q\rangle$ is factorial and $q_2\neq q_1$ it follows $\lambda_{q_1}=\lambda_{q_1}'+n\geq n$. Therefore, $n(x-q_1)=(\lambda_{q_1}-n)q_1+\sum_{q\in Q,q\neq q_1} \lambda_q q\in M$. Since $M$ is root-closed, we get $x-q_1\in M$, that is $q_1\leq_M x$.
\end{proof}

Let $M$ be an atomic monoid with set of atoms $H$. A \textit{stratification} of $H$ is a partition $H=H_1\cup H_2\cup\dots\cup H_k$, $k\ge 1$, such that for all $i\in\{1,\ldots,k\}$, 
\begin{itemize}
	\item[(S1)] the monoid $\langle H_i\rangle$ is factorial, 
	\item[(S2)] the set $H_i$ is linked to the monoid $M_i=\langle H_i \cup \dots \cup H_k\rangle$.
\end{itemize}

\begin{theorem}\label{uniquerep2}
	Let $M$ be an atomic monoid whose set of atoms admits a stratification $H=H_1\cup H_2\cup\dots\cup H_k$.  Then each $x\in M$ has a representation of the form $x=h_1+h_2+\cdots +h_k$ such that
	\begin{enumerate}
		\item $h_i\in\langle H_i\rangle$ for all $i\in \{1,\dots, k\}$,
		\item $h_i+\cdots +h_k\in \Ap(M,H_1\cup\dots\cup H_{i-1})$ for all $i\in\{2,\ldots k\}$.
	\end{enumerate}
	If in addition for each $i\in\{1,\dots,k\}$, the condition 
	\begin{itemize}
		\item[(S3)] for every $q,q'\in H_i$ with $q\neq q'$, $q$ and $q'$ are coprime in $M_i$
	\end{itemize}
	holds, then the above representation is unique.
\end{theorem}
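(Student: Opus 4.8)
The plan is to prove the two assertions separately, reusing the machinery already developed for the full-$\mathbb{N}^2$ case (Theorem~\ref{th:stratification-full-N2}) but now replacing the concrete cone/lattice arguments with the abstract notions of \emph{linked} and \emph{coprime}. First I would establish existence of the representation by an iterative descent exactly mirroring the proof of Theorem~\ref{th:stratification-full-N2}. Fix $x\in M$. Since $H_1$ is linked to $M_1=M$ (condition (S2) with $i=1$) and $\langle H_1\rangle$ is factorial (condition (S1)), Proposition~\ref{prop:decomposition-apery} applies to $Q=H_1$ inside $M$, giving $x=h_1+w_1$ with $h_1\in\langle H_1\rangle$ and $w_1\in\Ap(M,H_1)$. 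The key structural observation is that $w_1\in\Ap(M,H_1)$ forces $w_1\in M_2=\langle H_2\cup\dots\cup H_k\rangle$: indeed $w_1\in M=\langle H\rangle$, and no atom of $H_1$ can divide $w_1$ (in $\le_M$) since otherwise $w_1\notin\Ap(M,H_1)$.

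Next I would iterate. Having $w_1\in M_2$, apply Proposition~\ref{prop:decomposition-apery} again with $Q=H_2$ inside the monoid $M_2$ (legitimate because (S1) gives $\langle H_2\rangle$ factorial and (S2) gives $H_2$ linked to $M_2$), obtaining $w_1=h_2+w_2$ with $h_2\in\langle H_2\rangle$ and $w_2\in\Ap(M_2,H_2)$. I would then verify the two bookkeeping facts needed to continue: (a) $w_2\in\Ap(M,H_1)$, which follows because $w_2\le_M w_1$ and $w_1\in\Ap(M,H_1)$, so combined with $w_2\in\Ap(M_2,H_2)$ one gets $w_2\in\Ap(M,H_1\cup H_2)$; and (b) $w_2\in M_3=\langle H_3\cup\dots\cup H_k\rangle$, by the same no-$H_1$-nor-$H_2$-atom-divides argument. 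Repeating $k$ times terminates with $w_k=0$ (as $\Ap(M,H_1\cup\dots\cup H_k)=\Ap(M,H)=\{0\}$ by atomicity), yielding $x=h_1+\dots+h_k$ satisfying (1) and (2). The one point requiring care is that the Apéry set in which $w_i$ initially lives is $\Ap(M_i,H_i)$, computed \emph{inside} $M_i$, and I must promote it to an Apéry condition \emph{inside} $M$; this is exactly the content of facts (a)/(b), and it is the analogue of the remark after Theorem~\ref{th:stratification-full-N2} that the intermediate monoids $M_i$ need not be full.

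For uniqueness under the additional hypothesis (S3), the strategy is to apply Theorem~\ref{th:decomposition-apery-coprime} layer by layer. Suppose $x=h_1+\dots+h_k=h_1'+\dots+h_k'$ with both sides satisfying (1) and (2). Setting $w_1=h_2+\dots+h_k$ and $w_1'=h_2'+\dots+h_k'$, condition (2) gives $w_1,w_1'\in\Ap(M,H_1)$, while $h_1,h_1'\in\langle H_1\rangle$. Now $H_1$ is linked to $M$, $\langle H_1\rangle$ is factorial, and (S3) for $i=1$ says any two distinct elements of $H_1$ are coprime in $M_1=M$; hence by the equivalence (2)$\Leftrightarrow$(1) in Theorem~\ref{th:decomposition-apery-coprime} we have $M=\Ap(M,H_1)\oplus\langle H_1\rangle$, forcing $h_1=h_1'$ and $w_1=w_1'$. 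Cancelling (valid since $M$ is cancellative) leaves $h_2+\dots+h_k=h_2'+\dots+h_k'$, an identity inside $M_2$, and I would repeat the argument with $H_2$, $M_2$, and (S3) for $i=2$. An induction on $k$ then completes the proof.

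The main obstacle I anticipate is not any single hard estimate but rather the careful handling of \emph{which monoid} each Apéry condition refers to: condition (2) of the statement phrases everything relative to the ambient monoid $M$ (via $\Ap(M,H_1\cup\dots\cup H_{i-1})$), whereas the inductive decomposition via Proposition~\ref{prop:decomposition-apery} and Theorem~\ref{th:decomposition-apery-coprime} naturally produces conditions relative to $M_i$. Reconciling these two viewpoints—showing that an element lying in $\Ap(M_i,H_i)$ and below something in $\Ap(M,H_1\cup\dots\cup H_{i-1})$ actually lies in $\Ap(M,H_1\cup\dots\cup H_i)$—is the technical heart, and it is precisely where atomicity and the definition of ``linked'' (guaranteeing the descent terminates and that divisibility by earlier atoms is genuinely excluded) are used.
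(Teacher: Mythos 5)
Your proposal is correct and follows essentially the same route as the paper's proof: existence by iterating Proposition~\ref{prop:decomposition-apery} down the chain $M=M_1\supseteq M_2\supseteq\cdots$ with the same promotion argument (every element of $M\setminus M_{i+1}$ is divisible by an atom of $H_1\cup\dots\cup H_i$, by atomicity), and uniqueness by applying the direct-sum equivalence of Theorem~\ref{th:decomposition-apery-coprime} layer by layer under (S3). The only step you leave implicit, which the paper spells out, is the easy observation that a tail lying in $\Ap(M,H_1\cup\dots\cup H_i)$ automatically lies in $\Ap(M_i,H_i)$ because $M_i\subseteq M$, which licenses the use of $M_i=\Ap(M_i,H_i)\oplus\langle H_i\rangle$ at each stage.
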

\begin{proof}
	Let $x\in M$. We prove by induction that for all $i\in\{1,\ldots,k\}$, $x=h_1+\dots +h_i+w_i$, with $h_i\in \langle H_i\rangle$ and $w_i\in \Ap(M,H_1\cup \dots\cup H_i)$; in particular $w_i\in M_{i+1}$ (for $i<k$). By definition, $H_1$ is linked to $M=\langle H_1\cup \dots \cup H_k\rangle$, and so Proposition~\ref{prop:decomposition-apery}, ensures that there exists $h_1\in \langle H_1\rangle$ and $w_1\in\Ap(M,H_1)$ such that $x=h_1+w_1$. From $w_1\in \Ap(M,H_1)$, we deduce that $w_1\in M_2=\langle H_2\cup \dots \cup H_k\rangle$. 
	For the step from $i$ to $i+1$, we use again Proposition~\ref{prop:decomposition-apery} with $H_{i+1}$ and $M_{i+1}$, and obtain $w_i=h_{i+1}+w_{i+1}$, with $h_{i+1}\in \langle H_{i+1}\rangle$ and $w_{i+1}\in \Ap(M_{i+1},H_{i+1})$. As $w_i=h_{i+1}+w_{i+1}$ and by induction hypothesis $w_i\in \Ap(M,H_1\cup\dots\cup H_i)$, we have that $w_{i+1}\in \Ap(M,H_1\cup \dots \cup H_i)$. Assume that $w_{i+1}-q\in M$ for some $q\in H_{i+1}$. Then $w_{i+1}-q\in M\setminus M_{i+1}$ (because $w_{i+1}\in \Ap(M_{i+1},H_{i+1})$). Every element in $M\setminus M_{i+1}$ is of the form $h+m$ with $0\neq h\in \langle H_1\cup \dots \cup H_i\rangle$ and $m\in M_{i+1}$. So $w_{i+1}=q+h+m$, which contradicts $w_{i+1}\in \Ap(M,H_1\cup \dots \cup H_i)$. This proves that $w_{i+1}\in  \Ap(M,H_1\cup \dots \cup H_{i+1})$. 
	For $i=k$, we get $w_k\in \Ap(M,H_1\cup \dots \cup H_k)=\{0\}$, and we have that $x=h_1+\dots+h_k$ fulfilling conditions (1) and (2).
	
	For the uniqueness, assume that $h_1+\dots+h_k=h_1'+\dots+h_k'$ with $h_i$ and $h_i'$ as in the statement. We show by induction over $i$ that $h_i+\dots+h_k= h_i'+\dots+ h_k'$ implies that $h_{i+1}+\dots+h_k= h_{i+1}'+\dots+ h_k'$. For $i=1$, Theorem~\ref{th:decomposition-apery-coprime} applied to $M=M_1$ and $Q=H_1$, asserts that $h_1=h_1'$ and $h_2+\dots+h_k=h_2'+\dots+h_k'$. Suppose now that $h_i+\dots+h_k=h_i'+\dots+h_k'$. From (1) and (2), $h_i,h_i'\in \langle H_i\rangle$ and $h_{i+1}+\dots+h_k, h_{i+1}'+\dots+h_k' \in \Ap(M,H_1\cup \dots \cup H_i)$. Notice that by Theorem~\ref{th:decomposition-apery-coprime}, $M_i=\Ap(M_i,H_i)\oplus \langle H_i\rangle$. Suppose that $h_{i+1}+\dots+h_k-q\in M_i$ for some $q\in H_i$. Then $h_{i+1}+\dots+h_k-q\in M$, contradicting that $h_{i+1}+\dots+h_k \in \Ap(M,H_1\cup \dots \cup H_i)$. Hence $h_{i+1}+\dots+h_k\in \Ap(M_i,H_i)$. The same holds for $h_{i+1}'+\dots+h_k'$, and so the decomposition of Theorem~\ref{th:decomposition-apery-coprime} implies that $h_i=h_i'$ and $h_{i+1}+\dots+h_k=h_{i+1}'+\dots+h_k'$.
%
\end{proof}

\begin{example}
	Let $S$ be a numerical semigroup minimally generated by $\{n_1,\ldots,n_e\}$. Then $H=H_1\cup \dots \cup H_e$, with $H_i=\{n_i\}$ is a stratification of the minimal generating system of $S$. Condition~(S3) holds trivially in this setting.
\end{example}

\begin{example}
	Let $M$ be a full affine semigroup of $\mathbb{N}^2$ with $\operatorname{rank}(\operatorname{G}(M))=2$. Let $H$ be a Hilbert basis of $M$. Then the partition provided in Theorem~\ref{th:stratification-full-N2} is a stratification of $H$. 
\end{example}

Let $M$ be an inside factorial monoid with basis $Q$. Let $x\in M$. Then there exists a positive integer $n$ and nonnegative integers $\lambda_q$ for every $q\in Q$ (all but a finite number of them equal to zero) such that $nx = \sum_{q\in Q}\lambda_q q$. Observe that if $n'x=\sum_{q\in Q} \lambda_q' q$, then $nn'x=\sum_{q\in Q} n'\lambda_q q= \sum_{q\in Q}n\lambda_q' q$, and as $\langle Q\rangle$ is factorial, $n'\lambda_q = n\lambda_q'$, for all $q\in Q$. Equivalently $\lambda_q/n=\lambda_q'/n'$ for all $q\in Q$. We will call the set $\{\lambda_q/n\}_{q\in Q}$ the set of \emph{coordinates} of $x$ with respect to $Q$ in $M$. So the coordinates of $x$ uniquely determine $x$. 

Let us recall the following properties of the extraction grade  \cite[Lemma~2]{inside}: 
	\begin{enumerate}
	\item $\lambda_M(q,ax+by)=a\lambda_M(q,x)+b\lambda_M(q,y)$ for any $q\in Q$, $x,y\in M$, $a,b\in \mathbb N$;
	\item $\lambda_M(q,q)=1$ and $\lambda_M(q,q')=0$ for any $q,q'\in Q$, $q\neq q'$.
\end{enumerate}
Then it follows easily that $\{\lambda_M(q,x)\}_{q\in Q}$ are precisely the coordinates of $x$ with respect to $Q$ in $M$.

We define $D_M(Q)$ as the set of elements in $M$ whose coordinates are upper bounded by $1$. We then have a similar result to Lemma~\ref{lem:unique-diamond}.

\begin{lemma}\label{lem:unique-diamond-inside}
	Let $M$ be an inside factorial monoid with base $Q$. Assume that $g+w=w'$ for some $g\in \operatorname{G}(Q)$ and some $w,w'\in D_M(Q)$. Then $g=0$ (and thus $w=w'$).
\end{lemma}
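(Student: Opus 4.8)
The plan is to use the coordinate description developed immediately before the lemma. Recall that for an inside factorial monoid $M$ with base $Q$, each $x\in M$ has well-defined coordinates $\{\lambda_M(q,x)\}_{q\in Q}$, which are nonnegative rationals and which uniquely determine $x$; moreover $\lambda_M(q,-)$ is additive in the sense recorded above, $\lambda_M(q,q)=1$, and $\lambda_M(q,q')=0$ for $q\neq q'$. The key point is to extend these functionals from $M$ to the whole quotient group $\operatorname{G}(M)$, so that they are defined on $\operatorname{G}(Q)\subseteq \operatorname{G}(M)$ as well. Since $g\in\operatorname{G}(Q)$ can be written as $g=h-h'$ with $h,h'\in\langle Q\rangle$, the natural extension $\lambda_M(q,g):=\lambda_M(q,h)-\lambda_M(q,h')$ is a well-defined homomorphism $\operatorname{G}(M)\to\mathbb{Q}$ (independence of the representation follows from additivity and cancellativity). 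In particular, for $g\in\operatorname{G}(Q)$ the value $\lambda_M(q,g)$ is an \emph{integer} for every $q\in Q$, because $h,h'\in\langle Q\rangle$ have integer coordinates and only finitely many are nonzero.

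First I would apply the functional $\lambda_M(q,-)$ to the equation $g+w=w'$. By additivity this gives, for every $q\in Q$,
\[
\lambda_M(q,g)+\lambda_M(q,w)=\lambda_M(q,w').
\]
By definition of $D_M(Q)$, both $w$ and $w'$ lie in $M$ with all coordinates in $[0,1)$ (I read "upper bounded by $1$" as the strict half-open condition, exactly parallel to $D(\{u,v\})$ in Lemma~\ref{lem:unique-diamond}, where the coordinates $\gamma,\delta$ satisfied $0\le\gamma,\delta<1$). Hence $0\le\lambda_M(q,w)<1$ and $0\le\lambda_M(q,w')<1$ for all $q\in Q$. Rearranging, $\lambda_M(q,g)=\lambda_M(q,w')-\lambda_M(q,w)\in(-1,1)$. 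But $\lambda_M(q,g)$ is an integer, so the only possibility is $\lambda_M(q,g)=0$ for every $q\in Q$. This is the exact analogue of the step in Lemma~\ref{lem:unique-diamond} where $0\le\alpha+\gamma=\gamma'<1$ with $\alpha\in\mathbb{Z}$ forced $\alpha=0$.

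Finally, since $g\in\operatorname{G}(Q)$ and all its coordinates $\lambda_M(q,g)$ vanish, I would conclude $g=0$: writing $g=h-h'$ with $h,h'\in\langle Q\rangle$, the vanishing of all coordinates means $h$ and $h'$ have identical (integer) coordinates, and because $\langle Q\rangle$ is factorial this forces $h=h'$, i.e. $g=0$. Cancelling then yields $w=w'$. The one point that requires genuine care—and which I expect to be the main obstacle—is the well-definedness and group-homomorphism property of the extended functionals $\lambda_M(q,-)$ on $\operatorname{G}(Q)$, together with confirming that they take integer values there; once that is in place the argument is a direct transcription of Lemma~\ref{lem:unique-diamond} with the abstract coordinate functionals playing the role of the two linear coordinates of $\mathbb{Q}^2$. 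I would therefore isolate the extension of $\lambda_M(q,-)$ as the initial lemma-within-the-proof and let the inequality-plus-integrality argument follow immediately.
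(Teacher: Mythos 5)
Your proof is correct and is essentially the paper's own argument in different packaging: the paper writes $g=\sum_{q\in Q}z_q q$ with $z_q\in\mathbb{Z}$, picks $n$ with $nw,nw'\in\langle Q\rangle$, and uses factoriality of $\langle Q\rangle$ to equate coefficients in $ng+nw=nw'$, obtaining $0\le z_q+\lambda_q/n=\lambda_q'/n<1$ and hence $z_q=0$ --- exactly your ``integer in $(-1,1)$ must vanish'' step, since the paper has already identified the coordinates with the extraction grades $\lambda_M(q,\cdot)$ just before the lemma. The only difference is that instead of extending $\lambda_M(q,\cdot)$ to a homomorphism on $\operatorname{G}(M)$ (the well-definedness point you rightly flag, which you could sidestep entirely by writing $g=h-h'$ with $h,h'\in\langle Q\rangle$ and rewriting $g+w=w'$ as $h+w=h'+w'$ inside $M$), the paper simply clears denominators and compares coefficients in the free monoid $\langle Q\rangle$.
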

\begin{proof}
	Let $z_q\in \mathbb{Z}$ be such that $g=\sum_{q\in Q} z_q q$, and let $n$ be a positive integer such that $nw$ and $nw'$ are in $\langle Q\rangle$. So write $nw=\sum_{q\in Q} \lambda_q q$ and $nw'=\sum_{q\in Q} \lambda_q' q$, for some nonnegative integers $\lambda_q$, $\lambda_q'$. Then $ng+nw=nw'$, which implies that $\sum_{q\in Q} (nz_q+\lambda_q)q=\sum_{q\in Q} \lambda_q'q$. As \(\langle Q\rangle\) is factorial, we deduce that for all $q\in Q$, $nz_q+\lambda_q=\lambda_q'$, and consequently $0\le z_q+\lambda_q/n=\lambda_q'/n$. By hypothesis, $\lambda_q/n$ and $\lambda_q'/n$ are smaller than one, and $z_q\in \mathbb{Z}$, which forces $z_q=0$ for all $q\in Q$. 
\end{proof}

\begin{remark}
	For a cancellative monoid $M$, we can define its \emph{root-closure} as \[\bar{M}=\{x\in \operatorname{G}(M) \mid nx\in M \mbox{ for some } n\in \mathbb N\}.\] 
	With this notation, $M$ is root-closed if and only if $M=\bar M$. Suppose that $M$ is inside factorial with base $Q$. Then $\bar{M}$ is also inside factorial with base $Q$. Take $x\in M$ and let $\{r_q\}_{q\in Q}$ be the coordinates of $x$ with respect to $Q$ in $M$. Set $\bar{x}= x- \sum_{q\in Q} \lfloor r_q\rfloor q$. Then $\bar{x}\in \operatorname{G}(M)$. Let $n$ be a positive integer such that $nr_q\in \mathbb{N}$ for all $q$. Then $n\bar{x}\in M$, and consequently $\bar{x}\in D_{\bar{M}}(Q)$. This proves that $x=h+\bar{x}$, for some $h\in \langle Q\rangle$ and $\bar{x}\in D_{\bar{M}}(Q)$. Lemma~\ref{lem:unique-diamond-inside} applied to $\bar{M}$ shows that this expression is unique. 
\end{remark}

Observe that every numerical semigroup is inside factorial with base a singleton containing any nonzero element of the semigroup. In particular, we may choose a base containing just one atoms. We show next that for any inside factorial monoid, we can choose a base having only atoms.

\begin{lemma}
	Let $M$ be an inside factorial monoid. It is possible to choose a basis for $M$ whose elements are all atoms.
\end{lemma}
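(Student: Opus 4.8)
The plan is to keep the given factorial submonoid $S$ with base $Q=\mathcal A(S)$ only as an auxiliary coordinate system, and to build a new base by replacing each $q\in Q$ with a suitably chosen atom of $M$ lying on the same ray. The only tool I will use is the coordinate map recalled just before Lemma~\ref{lem:unique-diamond-inside}: the assignment $\phi\colon M\to\bigoplus_{q\in Q}\mathbb Q_+$, $\phi(x)=(\lambda_M(q,x))_{q\in Q}$. By the two listed properties of the extraction grade, $\phi$ is additive and satisfies $\phi(q)=e_q$, the $q$-th unit vector; moreover $\phi$ is injective, since the coordinates of an element determine it.

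First I would locate an atom of $M$ on each ray. Fix $q\in Q$; since $M$ is atomic, write $q=a_1+\dots+a_r$ with each $a_j\in\mathcal A(M)$. Applying $\phi$ gives $e_q=\phi(q)=\sum_j\phi(a_j)$, and because every coordinate of every $\phi(a_j)$ is a nonnegative rational, the $\phi(a_j)$ can be supported only in the direction $e_q$. Hence $\phi(a_j)=c_j e_q$ with $c_j=\lambda_M(q,a_j)>0$ for each $j$; in particular every $a_j$ is an atom of $M$ lying on the ray of $q$. Choose one of them and call it $a_q$. Since distinct $q$ yield distinct coordinate directions, the $a_q$ are pairwise distinct, and $\phi(a_q)=c_q e_q$ with $c_q>0$.

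Then I would verify that $Q'=\{a_q\mid q\in Q\}$ is a base of $M$ consisting of atoms. Under $\phi$ the submonoid $\langle Q'\rangle$ maps bijectively onto $\bigoplus_{q\in Q}\mathbb N\,(c_q e_q)$, which is free commutative on the vectors $c_q e_q$ (a relation $\sum n_q(c_q e_q)=\sum m_q(c_q e_q)$ forces $n_q c_q=m_q c_q$, hence $n_q=m_q$). As $\phi$ is injective, $\langle Q'\rangle$ is itself free commutative on $Q'$, so it is factorial with $\mathcal A(\langle Q'\rangle)=Q'$. To see that $M$ is a root-extension of $\langle Q'\rangle$, take $x\in M\setminus\{0\}$ and write $\phi(x)=\sum_{q\in F}r_q e_q$ with $F$ finite and $r_q\in\mathbb Q_+$; choosing a positive integer $N$ with $Nr_q/c_q\in\mathbb N$ for all $q\in F$ gives $N\phi(x)=\sum_{q\in F}(Nr_q/c_q)(c_q e_q)\in\phi(\langle Q'\rangle)$, whence $Nx\in\langle Q'\rangle$ by injectivity. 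Thus $Q'=\mathcal A(\langle Q'\rangle)$ is a base of $M$ all of whose elements are atoms of $M$.

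The one place where more than inside-factoriality is genuinely used is the very first step, producing an atom on each ray: this needs $M$ to be atomic, and it must, since $\mathbb Q_+$ is inside factorial (with base $\{1\}$) yet has no atoms and hence no base of atoms. Granting atomicity, the point that the atoms occurring in a factorization of $q$ all lie on the ray $\mathbb Q_+q$ — which is exactly what makes the replacement legitimate — is the nonnegativity-of-coordinates observation above; the factoriality of $\langle Q'\rangle$ and the root-extension property are then routine bookkeeping through $\phi$. I therefore expect the main conceptual content to be isolating the roles of atomicity and of the coordinate homomorphism, rather than any delicate computation.
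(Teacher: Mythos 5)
Your proof is correct, and at its core it runs on the same engine as the paper's: the observation that any atom occurring below a base element $q$ must lie on the ray of $q$, because comparing exponents in the factorial monoid $\langle Q\rangle$ after clearing denominators kills every coordinate away from $q$. The paper does this locally and iteratively: assuming some $q_0\in Q$ is not an atom, it picks an atom $a$ with $a+m=q_0$, derives $n_aa=a_{q_0}q_0$ by exactly the computation you phrase as ``$\phi(a_j)$ supported only in direction $e_q$'', and then checks that $(Q\setminus\{q_0\})\cup\{a\}$ is again a base, leaving the repetition of this swap implicit. Your version, routed through the coordinate homomorphism $\phi$ recalled before Lemma~\ref{lem:unique-diamond-inside}, performs all replacements simultaneously and verifies freeness of $\langle Q'\rangle$ and the root-extension property in one pass; this is tidier and sidesteps any termination discussion when $Q$ is infinite or contains many non-atoms. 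More importantly, your explicit atomicity hypothesis is not pedantry: the paper's step ``there is some $a\in\mathcal A(M)$ with $a+m=q_0$'' tacitly assumes that a non-atomic base element has an atom below it, and your example $\mathbb{Q}_+$ (inside factorial with base $\{1\}$, but without atoms) shows the lemma is false as literally stated, so your added hypothesis repairs the statement rather than weakening your proof. One small caution: your root-extension step leans on injectivity of $\phi$, which the paper does assert (``the coordinates of $x$ uniquely determine $x$'') but which is the one delicate point of the coordinate formalism; you can avoid it entirely, since $\phi(Nx)=\phi(y)$ with $y\in\langle Q'\rangle$ already gives $nNx=ny\in\langle Q'\rangle$ for a suitable denominator-clearing $n$, and some multiple landing in $\langle Q'\rangle$ is all the base property requires --- likewise, freeness of $\langle Q'\rangle$ uses only additivity of $\phi$, not injectivity.
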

\begin{proof}
 Let $A$ be the set of atoms of $M$. Suppose that there exists $q_0\in Q\setminus A$. Hence, there is some $a\in A$ with $a+m=q_0$ for some $m\in M\setminus\{0\}$. From the definition of inside factorial monoid, $n_aa=\sum_{q\in Q} a_qq$ for some $n_a\in\mathbb N\setminus \{0\}$ and $a_q$ non negative integers, and $n_mm=\sum_{q\in Q} m_qq$ for some $n_m\in\mathbb N\setminus \{0\}$ and $m_q$ non negative integers. As $n_an_m(a+m)=n_an_mq_0$ we have $\sum_{q\in Q}(n_ma_q+n_am_q)q=n_an_mq_0$. Since $\langle Q \rangle$ is factorial, $n_m a_q+n_a m_q=0$ for $q\neq q_0$ and $n_ma_{q_0}+n_a m_{q_0}=n_an_mq_0$. From the first equality $a_q=0$ and $m_q=0$ for $q\neq q_0$, because all of them are nonnegative integers. So we obtain $n_aa=a_{q_0}q_0$. 

 Let $x\in M$. As $M$ is inside factorial with base $Q$, we can write $n_xx=\sum_{q\in Q} \lambda_qq$ for some positive integer $n_x$ and some $\lambda_q$ non negative integers. Hence 
 \[a_{q_0}n_xx=\sum_{q\in Q} a_{q_0}\lambda_q q=\sum_{q\in Q, q\neq q_0} a_{q_0}\lambda_q q+a_{q_0}\lambda_{q_0}q_0=\sum_{q\in Q, q\neq q_0} a_{q_0}\lambda_q q+\lambda_{q_0}n_a a.\] Notice that from the equality $n_aa=a_{q_0}q_0$ it easily follows that $\langle Q'\rangle$ is factorial, which shows that $Q'=Q\setminus \{q_0\}\cup\{a\}$ is a base for $M$.
\end{proof}

Now suppose that $M$ is an inside factorial monoid with base $Q$. Let $H$ be the set of atoms of $M$. Suppose that $Q\subseteq H$. Set $H_1=Q$ and $M_1=M$. Let $M_2=\langle H\setminus H_1\rangle$. If $M_2$ is inside factorial with base $Q_2\subseteq H\setminus H_1$, then set $H_2=Q_2$ and $M_3=\langle H\setminus (H_1\cup H_2)\rangle$. If this process stops in a finite number of steps, we would end up with stratification $H_1\cup \dots \cup H_k$ of $H$. If such stratification exists, then we say that $M$ is a \emph{stratified inside factorial} monoid. This situation inspires the following results.
	
\begin{lemma}\label{lem:ap-DM}
	Let $M$ be a root-closed inside factorial monoid. Suppose that $H=H_1\cup \dots \cup H_k$ is a stratification of its set of atoms such that the monoid $\langle H_i\cup\cdots\cup H_k\rangle$ is inside factorial with base $H_i$. Then, for all $i\in \{1,\dots,k-1\}$, $\Ap(M,H_1\cup \dots \cup H_i)\subseteq D_{M_i}(H_i)$.
\end{lemma}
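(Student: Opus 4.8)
The plan is to adapt the proof of Lemma~\ref{lem:ap-diamond} to the inside-factorial setting: there the coordinates came from writing an element in the cone spanned by a two-element $H_i$, and fullness was used to pull an element back into $M$; here the coordinates will be the inside-factorial coordinates of $M_i$ with respect to its base $H_i$, and root-closedness of $M$ will replace the use of fullness. So fix $i\in\{1,\dots,k-1\}$, take $x\in\Ap(M,H_1\cup\dots\cup H_i)$, and aim to show that every coordinate of $x$ with respect to $H_i$ in $M_i$ is smaller than one, which is exactly the assertion $x\in D_{M_i}(H_i)$.

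The first step I would carry out is to verify that $x$ actually lies in $M_i$, so that its coordinates with respect to $H_i$ are defined. Since $M$ is atomic with set of atoms $H$, I would write $x$ as a sum of atoms; if some atom $a\in H_1\cup\dots\cup H_i$ occurred, then $x-a\in M$ would give $x\in(H_1\cup\dots\cup H_i)+M$, contradicting $x\in\Ap(M,H_1\cup\dots\cup H_i)$. Hence only atoms from $H_{i+1}\cup\dots\cup H_k$ appear, so $x\in\langle H_{i+1}\cup\dots\cup H_k\rangle\subseteq M_i$. Next, using that $M_i$ is inside factorial with base $H_i$, I would fix a positive integer $n$ and nonnegative integers $\lambda_q$ with $nx=\sum_{q\in H_i}\lambda_q q$, so that the coordinates of $x$ are the numbers $\lambda_q/n$.

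The heart of the argument is a contradiction: suppose $\lambda_q/n\ge 1$, i.e.\ $\lambda_q\ge n$, for some $q\in H_i$. Then
\[
n(x-q)=(\lambda_q-n)q+\sum_{q'\in H_i,\,q'\neq q}\lambda_{q'}q'\in\langle H_i\rangle\subseteq M,
\]
and since $x,q\in M$ with $M$ root-closed, this forces $x-q\in M$, that is $q\le_M x$. As $q\in H_i\subseteq H_1\cup\dots\cup H_i$, this again yields $x\in(H_1\cup\dots\cup H_i)+M$, contradicting the choice of $x$. Therefore $\lambda_q/n<1$ for all $q\in H_i$, giving $x\in D_{M_i}(H_i)$. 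The step I expect to be the crux — and the only place the hypotheses are genuinely needed — is the passage from $n(x-q)\in M$ to $x-q\in M$, which rests entirely on root-closedness of $M$; this is the precise analogue of the inclusion $\operatorname{G}(M)\cap\cone(M)\subseteq M$ invoked via fullness in Lemma~\ref{lem:ap-diamond}, and note that (S3) and factoriality of the pieces $\langle H_i\rangle$ play no role here.
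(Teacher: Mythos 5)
Your proposal is correct and follows essentially the same route as the paper: it expresses $nx=\sum_{q\in H_i}\lambda_q q$ via the inside-factorial base $H_i$ of $M_i$, assumes some coordinate $\lambda_q/n\ge 1$, and derives $n(x-q)\in M$, whence root-closedness gives $x-q\in M$, contradicting $x\in\Ap(M,H_1\cup\dots\cup H_i)$. The only difference is cosmetic: you spell out, via atomicity, the membership $x\in\langle H_{i+1}\cup\dots\cup H_k\rangle\subseteq M_i$ that the paper merely asserts.
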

\begin{proof}
	Let $x\in \Ap(M,H_1\cup \dots \cup H_i)$. This, in particular implies that $x\in \langle H_{i+1}\cup \dots \cup H_k\rangle=M_{i+1}\subseteq M_i$. By hypothesis $M_i$ is an inside factorial monoid with basis $H_i$.  Assume that there exists $h\in H_i$ such that the coordinate of $x$ corresponding to $h$ is greater than or equal to one. This means that there exists a positive integer $n$ and nonnegative integers $\lambda_q$ such that $nx=\sum_{q\in H_i}\lambda_q q$, and $\lambda_h/n\ge 1$, whence $\lambda_h\ge n$. This implies that $n(x-h)=nx-nh=(\lambda_h-n)h+\sum_{q\in H_i,q\neq h}\lambda_q q\in M$. As $M$ is root-closed, we deduce that $x-h\in M$, contradicting that $x\in \Ap(M,H_1\cup \dots \cup H_i)$.
\end{proof}

With this we can prove a generalization of Theorem~\ref{th:stratification-full-N2}. 

\begin{theorem}\label{th:For-Stratification}
	Let $M$ be an atomic monoid whose set of atoms admits a stratification $H=H_1\cup\cdots\cup H_k$ such that the monoid $\langle H_i\cup\cdots\cup H_k\rangle$ is inside factorial with base $H_i$.
	Then each $x\in M$ has a representation of the form $x=h_1+h_2+\cdots +h_k$ with
	\begin{enumerate}
		\item $h_i\in\langle H_i\rangle$ for all $i\in \{1,\dots, k\}$,
		\item $h_i+\cdots +h_k\in \Ap(M,H_1\cup\dots\cup H_{i-1})$ for all $i\in\{2,\ldots k\}$.
	\end{enumerate}
	Moreover, if $M$ is root-closed, then the above representation is unique.
\end{theorem}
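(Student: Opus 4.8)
The plan is to observe that this theorem is essentially a specialization of the machinery already built up in Theorem~\ref{uniquerep2}, combined with the uniqueness engine supplied by Lemma~\ref{lem:ap-DM} and Lemma~\ref{lem:unique-diamond-inside}. First I would establish the \emph{existence} half. The hypothesis gives a stratification $H=H_1\cup\cdots\cup H_k$ in the sense of conditions (S1) and (S2): each $\langle H_i\rangle$ is factorial (it is a base of the inside factorial monoid $M_i=\langle H_i\cup\cdots\cup H_k\rangle$, and bases generate factorial submonoids), and by Corollary~\ref{cor:decoposition-apery-inside} the base $H_i$ is linked to $M_i$. Thus the existence statement is exactly the first (unconditional) conclusion of Theorem~\ref{uniquerep2}, and I would simply invoke it: iterating Proposition~\ref{prop:decomposition-apery} down the chain $M=M_1\supseteq M_2\supseteq\cdots$ peels off $h_i\in\langle H_i\rangle$ while pushing the remainder into $\Ap(M,H_1\cup\cdots\cup H_i)\subseteq M_{i+1}$, terminating with $w_k\in\Ap(M,H)=\{0\}$.

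For \emph{uniqueness} under the additional root-closed hypothesis, rather than verifying the coprimality condition (S3) to reuse Theorem~\ref{uniquerep2}, I would argue directly using the ``diamond'' lemmas, which is cleaner. Suppose $x=h_1+\cdots+h_k=h_1'+\cdots+h_k'$ with both representations satisfying (1) and (2). The key induction is to show $h_i=h_i'$ for each $i$, proceeding from $i=1$ upward. Set $w_{i}=h_i+\cdots+h_k$ and $w_i'=h_i'+\cdots+h_k'$; by condition (2) both $w_{i+1}$ and $w_{i+1}'$ lie in $\Ap(M,H_1\cup\cdots\cup H_i)$, hence by Lemma~\ref{lem:ap-DM} both lie in $D_{M_i}(H_i)$. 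Assuming inductively that $w_i=w_i'$, subtracting gives $h_i+w_{i+1}=h_i'+w_{i+1}'$, i.e. $(h_i-h_i')+w_{i+1}=w_{i+1}'$ with $h_i-h_i'\in\operatorname{G}(H_i)=\operatorname{G}(Q_i)$. Now Lemma~\ref{lem:unique-diamond-inside}, applied inside the inside factorial monoid $M_i$ with base $H_i$, forces $h_i-h_i'=0$ and $w_{i+1}=w_{i+1}'$, which closes the induction.

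The main obstacle I anticipate is the bookkeeping that makes Lemma~\ref{lem:ap-DM} and Lemma~\ref{lem:unique-diamond-inside} \emph{applicable at each level}: those lemmas are stated for an inside factorial monoid with a prescribed base, and here the relevant monoid at stage $i$ is $M_i$ with base $H_i$, not the ambient $M$ with base $H_1$. So I must check that root-closedness of $M$ is enough; in fact Lemma~\ref{lem:ap-DM} already uses root-closedness of the ambient $M$ (to convert $n(x-h)\in M$ into $x-h\in M$), while $M_i$ itself need not be root-closed. For the uniqueness step this is harmless because the crucial containment $w_{i+1},w_{i+1}'\in D_{M_i}(H_i)$ comes precisely from Lemma~\ref{lem:ap-DM}, whose proof only needs the ambient $M$ to be root-closed together with the Apéry condition in $M$. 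The one point requiring care is that $h_i-h_i'$ genuinely lies in $\operatorname{G}(H_i)$ and that Lemma~\ref{lem:unique-diamond-inside} is read with $M\rightsquigarrow M_i$, $Q\rightsquigarrow H_i$; once that substitution is made explicit, the two lemmas slot together and the induction runs without further computation.
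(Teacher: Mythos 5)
Your proposal is correct and follows essentially the same route as the paper: existence is delegated to the argument of Theorem~\ref{uniquerep2} (via Corollary~\ref{cor:decoposition-apery-inside} and Proposition~\ref{prop:decomposition-apery}), and uniqueness is the same induction applying Lemma~\ref{lem:ap-DM} to place the tails in $D_{M_i}(H_i)$ and then Lemma~\ref{lem:unique-diamond-inside} with $M_i$, $H_i$, $g=h_i-h_i'$. The only cosmetic difference is the base case: the paper invokes Theorem~\ref{th:decomposition-apery-coprime} at $i=1$, while you run the diamond-lemma argument uniformly for all $i$, which is equally valid (and your observation that root-closedness is only needed for the ambient $M$ in Lemma~\ref{lem:ap-DM}, not for each $M_i$, matches the paper's setup).
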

\begin{proof}
	The existence goes as in Theorem~\ref{uniquerep2}. 

	For the uniqueness, assume that $h_1+\dots+h_k=h_1'+\dots+h_k'$ with $h_i$ and $h_i'$ as in the statement. We show by induction over $i$ that $h_i+\dots+h_k= h_i'+\dots+ h_k'$ implies that $h_{i+1}+\dots+h_k= h_{i+1}'+\dots+ h_k'$. For $i=1$, Theorem~\ref{th:decomposition-apery-coprime} applied to $M=M_1$ and $Q=H_1$, asserts that $h_1=h_1'$ and $h_2+\dots+h_k=h_2'+\dots+h_k'$. Suppose now that $h_i+\dots+h_k=h_i'+\dots+h_k'$. From (1) and (2), $h_i,h_i'\in \langle H_i\rangle$ and $h_{i+1}+\dots+h_k, h_{i+1}'+\dots+h_k' \in \Ap(M,H_1\cup \dots \cup H_i)$. By Lemma~\ref{lem:ap-DM}, $h_{i+1}+\dots+h_k, h_{i+1}'+\dots+h_k'\in D_{M_i}(H_i)$. Now we apply Lemma~\ref{lem:unique-diamond-inside} to $M_i$, $H_i$, $g=h_i-h_i'$, $w=h_{i+1}+\dots+h_k$ and $w'=h_{i+1}'+\dots+h_k'$, obtaining $g=0$ and $w=w'$.
\end{proof}

The conditions imposed for uniqueness in the last two theorems are a bit fragile, and in general one might not expect to hold for dimension larger than two as we can see in the next Example \ref{counterexample}. But we present before a couple of results which have a practical sense.

\begin{lemma}\label{lem:Belong-Apery}
	Let $M$ be a root-closed monoid and $A$ an arbritrary subset of $M$. Then $x\in \Ap(M,A)$ if and only if for each $a\in A$ and $m,n\in \mathbb N\setminus \{0\}$ it holds that $ma\leq_M nx$ implies $m<n$. 
\end{lemma}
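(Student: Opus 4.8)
The plan is to prove the biconditional in Lemma~\ref{lem:Belong-Apery} directly, exploiting the fact that root-closedness lets me convert statements about $ma \le_M nx$ into statements about $a \le_M x$. Recall that $x \in \Ap(M,A)$ means $x \in M \setminus (A+M)$, i.e.\ $x \notin a+M$ for every $a \in A$, which is exactly saying $a \not\le_M x$ for all $a \in A$. So the whole task is to show that, for a single $a \in A$, the condition ``$a \not\le_M x$'' is equivalent to ``for all $m,n \in \mathbb{N}\setminus\{0\}$, $ma \le_M nx$ implies $m<n$.'' Since the statement quantifies over $a \in A$ uniformly, it suffices to establish this pointwise equivalence for each fixed $a$.

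First I would prove the contrapositive of the forward direction. Assume the right-hand condition fails, i.e.\ there exist $m,n \in \mathbb{N}\setminus\{0\}$ with $ma \le_M nx$ and $m \ge n$. From $m \ge n$ we get $na \le_M ma \le_M nx$, so $na \le_M nx$; since $M$ is root-closed this forces $a \le_M x$, i.e.\ $x \in a+M$, so $x \notin \Ap(M,A)$. Conversely, for the other direction I would again argue contrapositively: suppose $x \notin \Ap(M,A)$, so $a \le_M x$ for some $a \in A$. Taking $m=n=1$ gives $a \le_M x$ (i.e.\ $1\cdot a \le_M 1\cdot x$) with $m=n$, which violates the implication ``$m<n$''; hence the right-hand condition fails for that $a$. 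This handles the equivalence for each $a$, and taking the conjunction over all $a\in A$ gives the stated result.

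The only subtlety I anticipate is the bookkeeping with the universal quantifier over $a \in A$: the right-hand side of the lemma is phrased as a single condition ranging over all $a \in A$ and all positive $m,n$, while $\Ap(M,A)$ is the complement of $\bigcup_{a\in A}(a+M)$. I need to be careful that the negation of ``$x \in \Ap(M,A)$'' is ``$\exists a \in A$ with $a \le_M x$'' and that the negation of the right-hand condition is ``$\exists a \in A,\ \exists m,n$ with $ma \le_M nx$ and $m \ge n$''; matching these existentials up correctly is where a sloppy argument could go wrong. The root-closedness is used exactly once, in the step $na \le_M nx \Rightarrow a \le_M x$, and that is the genuine content of the lemma; everything else is the monotonicity $m \ge n \Rightarrow na \le_M ma$ together with transitivity of $\le_M$, which holds because $M$ is cancellative and reduced so that $\le_M$ is a partial order.
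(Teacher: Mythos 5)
Your proof is correct and follows essentially the same route as the paper: root-closedness is invoked exactly once to pass from $na\le_M ma\le_M nx$ to $a\le_M x$, and the converse is the same $m=n=1$ observation (which, as you note implicitly and the paper notes explicitly, needs no root-closedness). Your contrapositive phrasing versus the paper's proof by contradiction is only a cosmetic difference.
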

\begin{proof}
Let $x\in \Ap(M,A)$ and suppose that $ma\leq_M nx$ for some $a\in A$ with $m\geq n$. Then $na\leq_Mma\leq_Mnx$ and, hence, $n(x-a)\in M$. Since $M$ is root-closed, we get $x-a\in M$ which contradicts $x\in\Ap(M,A)$. Thus, we must have $m<n$.

The converse implication holds even for any $M$. Assuming that $ma\leq_Mnx$ implies $m<n$ we cannot have $a\leq_Mx$ for some $a\in A$ (this would imply $1<1$), and therefore $x\notin M+A$, that is, $x\in \Ap(M,A)$.	
\end{proof}

\begin{corollary}
	Let $M$ be  as in Theorem \ref{th:For-Stratification} and root-closed. Then $H_i$ is the set of all strong atoms of $\langle H_i\cup\cdots\cup H_k\rangle$ for all $i\in\{1,\ldots, k\}$ and condition (2) is equivalent to 
	\begin{itemize}
		\item[(2')] The extraction grades $\lambda_M(h,h_i+\cdots+h_k)<1$ for all $h\in\langle H_1\cup\cdots\cup H_{i-1}\rangle$ and all $i\in\{2,\ldots ,k\}$.
	\end{itemize}
\end{corollary}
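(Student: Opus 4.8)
The statement splits into two essentially independent assertions: that $H_i$ is exactly the set of strong atoms of $M_i=\langle H_i\cup\cdots\cup H_k\rangle$, and that conditions $(2)$ and $(2')$ coincide. I would treat them separately, using throughout that $M$ is root-closed and that each $M_i$ is inside factorial with base $H_i$ and $\langle H_i\rangle$ factorial.

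For the strong-atoms claim the plan is a double inclusion. To see $H_i\subseteq$ strong atoms of $M_i$, fix $h\in H_i$ and suppose $y\le_{M_i}kh$. Comparing $H_i$-coordinates (which are additive and nonnegative on $M_i$) in the decomposition $kh=y+(kh-y)$ shows that every coordinate of $y$ vanishes except possibly the one at $h$; hence $ny=mh$ for some integers $m,n$ with $n\ge 1$. I would then use root-closedness of $M$ to upgrade this to $y\in\mathbb Nh$: after cancelling $\gcd(m,n)$ one writes $n_0y=m_0h$ with $\gcd(m_0,n_0)=1$, performs one Euclidean step $m_0=sn_0+t$ so that $n_0(y-sh)=th\in M$ forces $y-sh\in M$, and rules out $0<t<n_0$ because then $n_0(y-sh)\le_M n_0h$ would give $y-sh\le_M h$, contradicting that $h$ is an atom. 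Thus $y=sh$ and $h$ is strong. For the reverse inclusion, let $a$ be a strong atom of $M_i$; then $a$ is an atom of $M_i$, so $a\in H_j$ for some $j\ge i$. Writing $na=\sum_{h\in H_i}\lambda_hh\in\langle H_i\rangle$ by inside factoriality and picking $h\in H_i$ with $\lambda_h>0$, we get $h\le_{M_i}na$, so $h=la$ with $l\ge 1$; since both are atoms, $l=1$ and $a=h\in H_i$, whence $j=i$.

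For the equivalence of $(2)$ and $(2')$, abbreviate $w_i=h_i+\cdots+h_k$ and $A_i=H_1\cup\cdots\cup H_{i-1}$; it suffices to prove, for each $i$, that $w_i\in\Ap(M,A_i)$ if and only if $\lambda_M(h,w_i)<1$ for every nonzero $h\in\langle A_i\rangle$. The implication $(2')\Rightarrow(2)$ is immediate: taking $h=a\in A_i$, the inequality $\lambda_M(a,w_i)<1$ precludes $a\le_M w_i$, so no atom of $A_i$ divides $w_i$ and $w_i\in\Ap(M,A_i)$. For the converse I would first reduce from arbitrary $h$ to atoms: writing $h=\sum_{a\in A_i}c_aa$ and fixing any $a_0$ with $c_{a_0}\ge 1$, every relation $mh\le_M nw_i$ yields $mc_{a_0}a_0\le_M nw_i$, so $m/n\le\lambda_M(a_0,w_i)/c_{a_0}\le\lambda_M(a_0,w_i)$ and hence $\lambda_M(h,w_i)\le\lambda_M(a_0,w_i)$. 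It therefore remains to show $\lambda_M(a,w_i)<1$ for each atom $a\in A_i$ whenever $w_i\in\Ap(M,A_i)$.

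This last point is the crux, and the only place where a strict (rather than non-strict) inequality must be produced. Lemma~\ref{lem:Belong-Apery} already gives $ma\le_M nw_i\Rightarrow m<n$, i.e. every admissible ratio is $<1$, but a priori the supremum defining $\lambda_M(a,w_i)$ could still equal $1$. To close this gap I would establish that $M$, being root-closed and inside factorial with base $Q=H_1$, is an extraction monoid: using the additivity of the coordinates $\lambda_M(q,\cdot)$ one checks $\lambda_M(x,y)=\min\{\lambda_M(q,y)/\lambda_M(q,x)\mid q\in Q,\ \lambda_M(q,x)>0\}$, where $\le$ is the additive bound and the reverse is obtained by noting that $sy-rx$ (for $r/s$ equal to this minimum) has nonnegative $Q$-coordinates, so a multiple lies in $\langle Q\rangle\subseteq M$ and root-closedness gives $sy-rx\in M$. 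Consequently $\lambda_M(a,w_i)$ is a rational value actually attained by some $ra\le_M sw_i$, and Lemma~\ref{lem:Belong-Apery} then forces $r<s$, so $\lambda_M(a,w_i)=r/s<1$, completing $(2)\Rightarrow(2')$. The main obstacle is precisely this attainment step: without the extraction-monoid property the Apéry condition only yields $\lambda_M(a,w_i)\le 1$, and it is the coordinate formula (or an appeal to \cite{inside}) that makes the inequality strict.
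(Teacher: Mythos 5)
Your proof is correct and follows the paper's own strategy in all essentials: purity of each $h\in H_i$ in $M_i$ via the additive, nonnegative $H_i$-coordinates; an upgrade from pure to strong using root-closedness of $M$; the reverse inclusion via inside factoriality together with the defining property of a strong atom; and the equivalence of (2) and (2') via Lemma~\ref{lem:Belong-Apery} combined with attainment of the (rational) supremum defining $\lambda_M$. Two local deviations deserve comment. For the pure-to-strong upgrade, the paper decomposes $y=\sum_j k_jx_j$ into atoms of $M_i$, applies purity of $q_0$ to each $x_j$, and then uses root-closedness to force $x_j=q_0$; you instead perform a Euclidean division $m_0=sn_0+t$ directly on the relation $n_0y=m_0h$ and exclude $0<t<n_0$ by exhibiting a proper divisor of the atom $h$ --- an arithmetic variant of the same idea that avoids the atomic decomposition of $y$ (just make explicit that $0<t<n_0$ forces $y-sh\notin\{0,h\}$, which follows by cancellativity from $n_0(y-sh)=th$). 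More significantly, you spell out two steps the paper only asserts: the reduction of (2') from arbitrary nonzero $h\in\langle H_1\cup\cdots\cup H_{i-1}\rangle$ to atoms $a$ (via $mc_{a_0}a_0\le_M nw_i$, giving $\lambda_M(h,w_i)\le\lambda_M(a_0,w_i)$), whereas the paper's proof checks the condition only for $a\in A$; and an explicit verification that $M$ is an extraction monoid through the formula $\lambda_M(x,y)=\min\left\{\lambda_M(q,y)/\lambda_M(q,x)\mid q\in Q,\ \lambda_M(q,x)>0\right\}$, where the paper simply remarks ``being inside factorial, $M$ is an extraction monoid''. You correctly pinpoint the crux: attainment of the supremum is what converts the Ap\'ery condition $m<n$ of Lemma~\ref{lem:Belong-Apery} into the strict inequality $\lambda_M(a,w_i)<1$. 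One small simplification: root-closedness is not actually needed for that attainment, since already $N(sy-rx)\in\langle Q\rangle\subseteq M$ for suitable $N$, which yields $\lambda_M(x,y)\ge r/s$ directly; invoking it is harmless here, as $M$ is root-closed by hypothesis.
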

\begin{proof}
	For the first part, take $i$ fixed and let $M_i$ be the inside factorial monoid with basis $H_i$ given by the stratification of $H$.
	
	We show first that each $q\in H_i$ is a pure atom of $M_i$. Since $H_i\subseteq H$, we have that $q$ is an atom of $M$ and therefore an atom of $M_i$. Suppose that there is some $x\in M_i\setminus \{0\}$ with $x\leq_{M_i}nq_0$ for some $q_0\in H_i$. As $x\in M_i$ is inside factorial with base $H_i$, we can write $mx=\sum_{q\in H_i} x_qq$ for some $m\in \mathbb N\setminus \{0\}$ and some $x_q\in \mathbb{N}$. We can easily deduce that $mx=\sum_{q\in H_i} x_qq\leq_{M_i} mnq_0$. So there exists $u\in M_i$ such that $\sum_{q\in H_i} x_qq+u= mnq_0$. As $u\in M_i$, we can write $m'u=\sum_{q\in H_i} u_qq$ for some $m'\in \mathbb N\setminus \{0\}$ and some $u_q\in \mathbb{N}$. We obtain $m'\sum_{q\in H_i} x_qq+\sum_{q\in H_i} u_qq= m'mnq_0$. As the element in $H_i$ are $\mathbb{Q}$-linearly independent, we have $m'x_q+u_q=0$ for all $q\in H_i\setminus\{q_0\}$. Observe that $m'x_q+u_q=0$ forces $x_q=u_q=0$, since both are nonnegative integers and $m'\neq 0$. Hence $mx=x_{q_0}q_0$, and so $q_0$ is a pure atom.
	
	Thus, all $q\in H_i$ are pure atoms of $M_i$. Now, let $y\leq_{M_i} n q_0$ for $y\in M_i\setminus\{0\}$, $q_0\in H_i$ and $n$ a positive integer. By definition of $M_i$, $y=\sum_{j=1}^rk_jx_j$ for some $x_j\in H_i\cup \dots \cup H_k$ and $k_j$ positive integers. 
	Therefore $x_j\leq_{M_i} y\leq_{M_i} n q_0$, and as $q_0$ is pure, we deduce that $m_jx_j=n_jq_0$ for some positive integers $m_j$ and $n_j$. If $m_j\leq n_j$, then $n_j(x_j-q_0)=(n_j-m_j)x_j\in M$ and since $M$ is root-closed we get $x_j-q_0\in M$. Therefore $x_j=q_0$, since $x_j$ is an atom of $M$. Analogously, if $n_j\leq m_j$, then $m_j(q_0-x_j)=(m_j-n_j)q_0\in M$, which yields $q_0-x_j\in M$, and thus $x_j=q_0$, since $q_0$ is an atom of $M$. Thus, we arrive at $y=(\sum_{j=i}^r k_j)q_0$. This proves that all $q\in H_i$ are strong atoms of $M_i$.
	
	We now show that every strong atom of $M_i$ must be in $H_i$. Let $a$ be a strong atom of $M_i$. There exists $m\in \mathbb N\setminus\{0\}$ such that $ma=\sum_{q\in H_i}a_qq$ for some nonnegative integers $a_q$. Therefore exists some $q_0$ such that $q_0\leq_{M_i} ma$, and since $a$ is a strong atom we must have that $q_0=n a$ with $n$ a positive integer. Since $q_0$ is an atom of $M_i$, we deduce that $a=q_0\in H_i$.
	
   The equivalence of (2) and (2') follows from Lemma \ref{lem:Belong-Apery} taking $A=H_1\cup\cdots\cup H_{i-1}$. Let $x\in M$ and $a\in A$. The condition that $ma\leq_Mnx$ implies $m<n$ is equivalent to $\lambda_M(a,x)<1$ (being inside factorial, $M$ is an extraction monoid). By Lemma~\ref{lem:Belong-Apery}, $x=h_i+\cdots +h_k\in\Ap(M,A)$ if and only if $\lambda_M(a,x)<1$ for all $a\in A$ which amounts to condition (2').
\end{proof}

The above corollary is helpful in doing computations for concrete cases. In a first step one has to find iteratively the strong atoms of the various monoids. In a second step, to check the inequalities in conditions (2'), one has to calculate the extraction grade $\lambda_M$ for the given monoid $M$, as we did in Example \ref{ex:Elliot-lambda}.

We finish by giving two more examples. The first one shows that the stratification proposed in our motivation section cannot, in general, be performed in higher dimensions. The second provides an example in arbitrary dimension.

\begin{example}\label{counterexample}
	Consider the set $M$ of nonnegative integer solutions of the equation $4x+5y+8z \equiv 0\pmod{11}$, whose Hilbert basis  is 
	\begin{multline*}
		H=\{(0,0,11),(0,11,0),(11,0,0),(0,1,9),(0,5,1),(1,0,5),(9,0,1),(1,8,0),(7,1,0),(0,2,7),\\ 
		(0,4,3),(7,0,2),(3,0,4),(3,2,0),(2,5,0), (0,3,5),(5,0,3),(5,1,1),(1,1,3),(1,2,1),(3,2,1)\}
	\end{multline*}
	(this computation can be performed, for instance, with the \texttt{GAP} \cite{gap} package \texttt{numericalsgps} \cite{numericalsgps}).
	Notice that $M$ is isomorphic as a monoid to the set of nonnegative integer solutions of the equation $4x+5y+8z=11t$ (see for instance \cite[Lemma~12]{GSKL}).

	For the first step of the stratification, we consider $H_1= \{(0,0,11),(0,11,0),(11,0,0)\}$ and the elements of the Ap\'ery set $\operatorname{Ap}(M,H_1)$ can be written in a table as follows:\\
	\resizebox{\hsize}{!}{$\begin{array}{ccccccccccc}
			(0,0,0), &  \mathbf{(1,0,5)}, & (2,0,10), &  \mathbf{(3,0,4)}, & (4,0,9), &  \mathbf{(5,0,3)}, &   (6,0,8), &  \mathbf{(7,0,2)}, &  (8,0,7), &  \mathbf{(9,0,1)}, &  (10,0,6), \\
			\mathbf{(0,1,9)}, &  \mathbf{(1,1,3)}, &  (2,1,8), & \mathbf{(3,1,2)}, & (4,1,7), &  \mathbf{(5,1,1)}, &   (6,1,6), &  \mathbf{(7,1,0)}, &  (8,1,5), & (9,1,10), &  (10,1,4), \\
			\mathbf{(0,2,7)}, &  \mathbf{(1,2,1)}, &  (2,2,6), &  \mathbf{(3,2,0)}, &	(4,2,5), & (5,2,10), &   (6,2,4), &  (7,2,9), &  (8,2,3), &	(9,2,8), &  (10,2,2), \\
			\mathbf{(0,3,5)}, & (1,3,10), &  (2,3,4), &  (3,3,9), &	(4,3,3), &  (5,3,8), &   (6,3,2), &  (7,3,7), &	 (8,3,1), &	(9,3,6), &  (10,3,0), \\
			\mathbf{(0,4,3)}, &  (1,4,8), &  (2,4,2), &  (3,4,7), &	(4,4,1), &  (5,4,6), &   (6,4,0), &  (7,4,5), & (8,4,10), &	(9,4,4), &  (10,4,9), \\
			\mathbf{(0,5,1)}, &  (1,5,6), &  \mathbf{(2,5,0)}, &  (3,5,5), &	(4,5,10)&  (5,5,4), &   (6,5,9), &  (7,5,3), &	 (8,5,8), &	(9,5,2), &  (10,5,7), \\
			(0,6,10), &  (1,6,4), &  (2,6,9), &  (3,6,3), &	(4,6,8), &  (5,6,2), &   (6,6,7), &  (7,6,1), &	 (8,6,6), &	(9,6,0), &  (10,6,5), \\
			(0,7,8), &  (1,7,2), &  (2,7,7), &  (3,7,1), &	(4,7,6), &  (5,7,0), &   (6,7,5), & (7,7,10), &	 (8,7,4), &	(9,7,9), &  (10,7,3), \\
			(0,8,6), &  \mathbf{(1,8,0)}, &  (2,8,5), & (3,8,10), &	(4,8,4), &  (5,8,9), &   (6,8,3), &  (7,8,8), &	 (8,8,2), &	(9,8,7), &  (10,8,1), \\
			(0,9,4), &  (1,9,9), &  (2,9,3), &  (3,9,8), & (4,9,2), &  (5,9,7), &   (6,9,1), &  (7,9,6), &	 (8,9,0), &	(9,9,5), & (10,9,10), \\
			(0,10,2), & (1,10,7), & (2,10,1), & (3,10,6), &	(4,10,0),& (5,10,5), & (6,10,10), & (7,10,4), &	(8,10,9), & (9,10,3),	& (10,10,8). 
		\end{array}$}
	where, we wrote in bold face the elements in the Hilbert basis.
	
	Is not difficult to see that the strong atoms in $\langle H\setminus H_1\rangle$ are $$H_2=\{(0,1,9),(0,5,1),(1,0,5),(9,0,1),(1,8,0),(7,1,0)\}.$$ The other elements in the $\operatorname{Ap}(M,H_1)$ can be written as:
	\[
	\begin{array}{rcl}
		4(0,2,7) & = & 3(0,1,9)+1(0,5,1), \\
		2(0,3,5) & = & 1(0,1,9)+1(0,5,1), \\
		4(0,4,3) & = & 1(0,1,9)+3(0,5,1), \\
		45(1,1,3) & = & 15(0,1,9)+3(1,8,0)+6(7,1,0), \\
		60(1,2,1) & = & 5(0,1,9)+15(0,5,1)+4(1,8,0)+8(7,1,0), \\
		5(2,5,0) & = & 3(1,8,0)+1(7,1,0), \\
		4(3,0,4) & = & 3(1,0,5)+1(9,0,1), \\
		40(3,1,2) & = & 4(1,8,0)+8(7,1,0)+15(1,0,5)+5(9,0,1), \\
		3(3,2,0) & = & 1(2,5,0)+1(7,1,0), \\
		2(5,0,3) & = & 1(1,0,5)+1(9,0,1), \\
		24(5,1,1) & = & 4(2,5,0)+4(7,1,0)+3(1,0,5)+9(9,0,1), \\
		4(7,0,2) & = & 1(1,0,5)+3(9,0,1).
	\end{array}
	\]
	For the next step we obtain $H_3=\{(0,2,7),(0,4,3),(7,0,2),(3,0,4),(3,2,0),(2,5,0)\}$ as the set of strong atoms in $\langle H \setminus (H_1\cup H_2)\rangle$. The rest of the elements in the Hilbert basis of $M$ can be written as:
	\[
	\begin{array}{rcl}
		2(0,3,5) & = & 1(0,2,7)+1(0,4,3), \\
		6(1,1,3) & = & 2(3,0,4)+1(0,2,7)+1(0,4,3), \\
		3(1,2,1) & = & 2(3,2,0)+1(0,4,3), \\
		2(3,1,2) & = & 1(3,2,0)+1(3,0,4), \\ 
		2(5,0,3) & = & 1(7,0,2)+1(3,0,4), \\
		2(5,1,1) & =& 1(7,0,2)+1(3,2,0). 
	\end{array}
	\]
	Finally, the last step is to consider $H_4=\{(0,3,5),(5,0,3),(5,1,1),(1,1,3),(1,2,1)\}$ as the set of strong atoms  of $\langle H\setminus (H_1\cup H_2\cup H_3)\rangle$. But now we have two different ways to write the last element $(3,1,2)$ as a rational combination of the elements in $H_4$:
	$2(3,1,2)=(5,1,1)+(1,1,3)$  or  $2(3,1,2)= (5,0,3)+(1,2,1)$, which comes from  relation  $(5,1,1)+(1,1,3)=(5,0,3)+(1,2,1)$ between elements in $H_4$.
\end{example}

\begin{example} We provide an example inspired by \cite[Lemma~2.6]{jafari}.
	\begin{enumerate}[(1)]
		\item Let $M$ be an inside factorial monoid with base $Q$ and set of atoms $H=Q\cup\{d\}$, $d\notin Q$. We see next that this partition is a stratification of $H$ that satisfies Condition~(S3). Moreover, each $x\in M$ has a unique representation $x=\sum_{q\in Q}\lambda_q q+\lambda d$ with $\lambda,\lambda_q\in\mathbb N$ and $\lambda<\mu(d)=\min\{m\in \mathbb N\setminus\{0\}\mid md\in\langle Q\rangle\}$.
		
		It is clear that $H=H_1\cup H_2$ with $H_1=Q$ and $H_2=\{d\}$ is a stratification of $H$. By Corollary~\ref{cor:decoposition-apery-inside} we have $M=\Ap(M,Q)+\langle Q\rangle$. Condition~(S3) for $H_2$ holds trivially. For $H_1$ we proceed as follows. Let $q,q'\in Q$, $q\neq q'$ and $x+q'=y+q$ with $x,y\in M$. For every $w\in \Ap(M,Q)$ we must have $w=\lambda d$, $\lambda\in \mathbb N$, and the definition of $\mu=\mu(d)$ forces $\lambda<\mu$. Let $x=h+\lambda d$, $y=h'+\lambda' d$ with $h,h'\in \langle Q\rangle$ and  $\lambda,\lambda'< \mu$. It follows that $h+q'+\lambda d=h'+q+\lambda' d$. Without restriction, assume that $\lambda'\leq \lambda$  and, hence, $h'+q-(h+q')=(\lambda-\lambda')d\in M$. For any inside factorial monoid with base $Q$ it holds for $a,b\in\langle Q\rangle$ that $a\leq_M b$ is equivalent to $a\leq_{\langle Q\rangle} b$. Thus, we obtain $(\lambda-\lambda')d\in\langle Q\rangle$.
		Since $0\leq \lambda-\lambda'< \mu(d)$ we must have $\lambda=\lambda'$. This yields $h+q'=h'+q$. As $\langle Q\rangle$ is factorial and $q\neq q'$, we must have $q\leq_M h$ and, hence, $q\leq_M x$. This proves (S3) for $H_1$. Applying Theorem~\ref{uniquerep2} yields a unique representation $x=h_1+h_2$, $h_1=\sum_{q\in Q}\lambda_q q\in\langle Q\rangle$ and $h_2\in\Ap(M,Q)$; $h_2=\lambda d$ for some nonnegative integer $\lambda < \mu(d)$.

		\item If $H=Q\cup\{d_1,\ldots,d_r\}$ with $d_i\notin Q$ and $m_id_i=n_id_1$ for some $m_i,n_i\in\mathbb N\setminus\{0\}$, $i\in\{1,\dots, r\}$, then $H_1=Q$, $H_{i+1}=\{d_i\}$, for $i\in \{1,\dots,r\}$ gives a stratification of $H$ fulfilling Condition~(S3).

		Clearly, $H=H_1\cup\cdots\cup H_{r+1}$ with $H_1=Q$, $H_{i+1}=\{d_i\}$, $i\in\{1,\dots, r\}$ is a stratification of $H$. Condition~(S3) for $H_{i+1}$, $i\in \{1,\dots,r\}$, holds trivially. From Corollary~\ref{cor:decoposition-apery-inside} we have that $M=\Ap(M,Q)+\langle Q\rangle$ and we shall show that this sum is direct. By Theorem~\ref{th:decomposition-apery-coprime} this is equivalent to Condition~(S3) for $H_1=Q$.
	
		For $x\in M$, $x=h+w$ with $h\in\langle Q\rangle$, $w\in\Ap(M,Q)$. Observe that $w=\sum_{i=1}^rk_id_i$. If we define $m=\prod_{i=1}^rm_i=m_j\cdot m_j'$, $j\in\{1,\dots, r\}$, we obtain $mw=\lambda d_1$, where $\lambda=\sum_{i=1}^rk_im_i'n_i\in \mathbb N$.
	
		Now, suppose that $x=h+w=h'+w'$, for some $h'\in\langle Q\rangle$ and $w'\in\Ap(M,Q)$. Arguing as above, we have $mw'=\lambda'd_1$, and consequently $mh+\lambda d_1=mh'+\lambda'd_1$. Suppose without loss of generality that $\lambda'\leq\lambda$, and, hence, $m(h'-h)=(\lambda-\lambda')d_1\in M$. As in part (1), $(h-h')\in\langle Q \rangle$, and since a factorial monoid is root closed, we get $\tilde h=h'-h\in\langle Q\rangle$. From $w-w'=h'-h$ we obtain $\tilde h\leq_M w$, and since $w\in \Ap(M,Q)$ we must have $\tilde h=0$. Therefore $h=h'$ and $w=w'$. 
\end{enumerate}
\end{example}

\end{document}